\theoremstyle{plain}      
\newtheorem{theorem}{Theorem}[section]      
\newtheorem{lemma}{Lemma}[section]      
\newtheorem{corollary}[theorem]{Corollary}      
\newtheorem{proposition}{Proposition}[section]      
 \newtheorem{question}{Question}[section]       
\newtheorem{definition}{Definition}[section]          
\theoremstyle{remark}      
\newtheorem{remark}{Remark}[section]
\newcommand{\Q}{{\mathbb{Q}}}        
\newcommand{\Z}{{\mathbb{Z}}}   
\newcommand{\C}{{\mathbb{C}}}      
\newcommand{\R}{{\mathbb{R}}}
  \newcommand{\Aut}{{\rm{Aut}}}  
  \newcommand{\Out}{{\rm{Out}}}  
\newcommand{\G}{{\Gamma}}
\newcommand{\Mod}{{\frak{M}}}
\renewcommand{\O}{{\mathcal{O}}}
  \newenvironment{dedication}
  {\thispagestyle{empty}
   \itshape             
   \raggedleft          
  }
  {\par 
  }
\begin{document}


\title[Mapping class group quotients]{On mapping class group quotients by powers of Dehn twists and their representations    
}     

 \author{Louis Funar} 
 \address{ Institut Fourier, UMR 5582, Laboratoire de Math\'ematiques \\
Universit\'e Grenoble Alpes, CS 40700, 38058 Grenoble cedex 9, France}

\maketitle 
\begin{dedication}
Dedicated to Vladimir G. Turaev on the occasion of his 65-th birthday
\end{dedication}

\begin{abstract}
The aim of this paper is to survey some known results about mapping class group 
quotients by powers of Dehn twists, related to their finite dimensional 
representations and to state some open questions. One can construct finite quotients of them, 
out of representations with Zariski dense images into semisimple Lie groups. We show that, in genus 2, the Fibonacci TQFT representation is actually a specialization of the Jones representation.  Eventually, we explain a method of Long and Moody which provides    large families of mapping class group representations.

\vspace{0.1cm}
\noindent 2000 MSC Classification: 57 M 07, 20 F 36, 20 F 38, 57 N 05.  
 
\noindent Keywords:  Mapping class group, quantum 
representation, Long-Moody cohomological induction.
\end{abstract}

\section{Mapping class group quotients}
\subsection{Introduction}

Set $\Sigma_{g,k}^r$ for the 
orientable surface of genus $g$ with $k$ boundary components and $r$ 
marked points. We denote by 
$\Gamma_{g,k}^r$ the  mapping class group of $\Sigma_{g,k}^r$, namely 
the group of isotopy classes of orientation-preserving homeomorphisms that fix pointwise 
the boundary components and preserve globally the set of marked points. The pure mapping class group $P\Gamma_{g,k}^r$ 
consists of those classes of homeomorphisms which fix pointwise both the 
boundary components and each of the marked points. 
 
We set $\pi_{g,k}^r$ for the fundamental group of the surface $\Sigma_{g,k}^r$. 
Recall that, by the Dehn--Nielsen--Baer  theorem $\G_g^1$ is the group of orientation-preserving automorphisms  of 
$\pi_g$, namely those which preserve the conjugacy class of the relator instead of reversing it.   
Further $\G_g={\rm Out}^+(\pi_g)={\rm Aut}^+(\pi_g)/{\rm Inn}(\pi_g)$, where ${\rm Inn}(\pi_g)$ is the subgroup of 
inner automorphisms of $\pi_g$. 
There is a more general identification of algebraic and topological mapping class groups, as follows. Denote by $\gamma_j$ and $\delta_s$ the loops around the punctures and respectively the boundary components and 
by $[z]$ the conjugacy class in $\pi_{g,k}^r$ of the element $z$. Let ${\rm Aut}^+(\pi_{g,k}^r; C_1,\ldots,C_s)$ stands for the subgroup of those automorphisms fixing globally each set of conjugacy classes in $C_1, C_2, \ldots,C_s$. 
Let $P_{r}$ be the set of all peripheral conjugacy classes $[\gamma_j]$ and  
$\mathbf P_r$ be the vector consisting of these peripheral conjugacy classes. Similarly, $P^{\partial}_k$ is the set 
of all boundary conjugacy classes $[\delta_j]$ and by 
$\mathbf P^{\partial}_k$ the vector consisting of these peripheral conjugacy classes.
Then the Dehn--Nielsen--Baer  theorem states that there is an isomorphism: 
\[ \Gamma_{g,k}^r\simeq {\rm Out}^+(\pi_{g,k}^r, P_{r}, \mathbf P^{\partial}_k)= {\rm Aut}^+(\pi_{g,k}^r; P_{r}, \mathbf P^{\partial}_k))/{\rm Inn}(\pi_{g,k}^r).\]
The notation is intended to specify that each boundary conjugacy class is fixed, while the peripheral conjugacy classes are only globally invariant.  
If we fix the base point of the fundamental group to be among the marked points, it will be automatically 
invariant by the pure mapping class group so that:  
\[ P\Gamma_{g,k}^{r+1} \simeq {\rm Aut}^+(\pi_{g,k}^r; \mathbf P_{r}, \mathbf P^{\partial}_k).\]

The main questions addressed here concern the  finite-dimensional 
representations of mapping class groups.  We shall motivate the introduction 
of the normal subgroups $\Gamma_{g,k}^p[p]$ generated by  
$p$-th powers $T_{\gamma}^p$ of Dehn twists $T_{\gamma}$ along simple curves 
$\gamma$ on the surface. Furthermore, we introduce the family of characteristic 
quotients 
\[ \Gamma_{g,k}^r(p)=\Gamma_{g,k}^r/\Gamma_{g,k}^r[p].\]
More generally, if $G\subseteq \Gamma_{g,k}^r$ is a subgroup, then we denote 
by $G(p)$ the image of $G$ within the quotient $\Gamma_{g,k}^r(p)$. 

In the first half of this article we survey some of the known properties  and  state 
some questions concerning the groups $\Gamma_{g,k}^r(p)$, in relation with  their 
representations. The main source of finite-dimensional representations for these groups 
are the modular tensor categories (see \cite{Turaev}), which arose from the seminal work of 
Witten (\cite{Witten}), Reshetikhin and Turaev (\cite{RT}) on 3-manifold invariants. 
We discuss some properties 
of the family of representations associated with the groups $SU(2)/SO(3)$. 
In the last part we explain some algebraic/geometric constructions of 
mapping class group representations following Long and Moody.

\subsection{Compact representations of mapping class groups}
The quotients $\Gamma_{g,k}^r(p)$ arise naturally when we study representations of mapping class groups into compact Lie groups. 

\begin{definition}
A representation $\rho:\Gamma_{g,k}^r\to G$ into a linear algebraic group $G$
is called {\em unipotent-free} if the images of the Dehn twists are diagonalizable elements 
in $G_{\C}$. 
\end{definition}
\noindent For instance, representations into a compact Lie group $G$ are automatically unipotent-free representations.

\begin{proposition}\label{unipotentfree}
Let $G$ be a linear algebraic group. There exists some $p=p(G)$, such that  
any  unipotent-free representation  $\rho:\Gamma_{g,k}^r\to G$, for $g\geq 3$ 
factors through the quotient $\Gamma_{g,k}^r(p)$. 
\end{proposition}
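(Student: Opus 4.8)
The plan is to reduce the assertion to a uniform bound on the order of the image of one Dehn twist, and then to extract that bound from the relations among Dehn twists. First I would fix a closed embedding $G\hookrightarrow GL_N(\C)$; since a diagonalizable element of $G_\C$ is semisimple in $GL_N(\C)$, any unipotent-free representation $\rho\colon\Gamma_{g,k}^r\to G$ may be regarded as a representation into $GL_N(\C)$ sending every Dehn twist to a semisimple matrix, and it suffices to produce $p=p(N)$ with $\rho(T_\gamma)^p=1$ for every simple closed curve $\gamma$. Two reductions bring this down to one fixed group. On the one hand, all Dehn twists along non-separating curves are conjugate in $\Gamma_{g,k}^r$ for $g\geq 1$, and, by the lantern and chain relations, every Dehn twist --- separating ones included --- is a product of conjugates of a fixed non-separating twist and its inverse; so it is enough to bound the order of $\rho(T_{\gamma_0})$ for a single non-separating $\gamma_0$, the separating case then following by reinjecting this bound, together with the semisimplicity of $\rho(T_\delta)$, into those relations. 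On the other hand, a non-separating $\gamma_0$ lies in a $\pi_1$-injective subsurface of genus $3$ inside which it is still non-separating, and the inclusion induces an injection of mapping class groups into $\Gamma_{g,k}^r$; restricting $\rho$, we are reduced to showing that for a unipotent-free representation of the genus-$3$ mapping class group $\Gamma_{3,1}$ (with one boundary component) into $GL_N(\C)$ the image of a non-separating Dehn twist has order bounded in terms of $N$ alone.

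For this the hypothesis $g\geq 3$ is used through perfectness: $\Gamma_{3,1}$ is perfect, so its Zariski closure $H\subseteq GL_N$ is a perfect linear algebraic group and $\overline H:=H/R_u(H)$ is semisimple. If the image of $T_{\gamma_0}$ in $\overline H$ has order $m$, then $\rho(T_{\gamma_0})^m$ lies in $R_u(H)$ and is at once semisimple and unipotent, hence trivial, so the order of $\rho(T_{\gamma_0})$ equals that of its image in $\overline H$, and we may assume $H$ semisimple. I would then run a specialization argument: the matrix entries of $\rho$ generate a finitely generated subring $A\subseteq\C$, and reduction modulo the maximal ideals of $A$, combined with the $6$-chain relation $(T_{c_1}\cdots T_{c_6})^{14}=T_{\partial}$ --- whose left-hand side is a product of conjugates of $T_{\gamma_0}$ and whose right-hand side, being central, has finite order in $\overline H$ --- forces the eigenvalues of $\rho(T_{\gamma_0})$ to be roots of unity. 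Finally, the characteristic polynomial of $\rho(T_{\gamma_0})$ has degree $N$ with all roots roots of unity, and playing off the chain relation (which already makes $\det\rho(T_{\gamma_0})$ a root of unity of controlled order) against the braid relations satisfied by the conjugate semisimple matrices $\rho(T_{c_i})$ bounds the orders of these roots of unity, hence the order of $\rho(T_{\gamma_0})$, by a function of $N$; taking $p(G)$ to be, say, the factorial of this bound completes the proof.

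I expect the quasi-unipotence step to be the main obstacle. The chain relation by itself only says that a product of conjugates of the semisimple element $\rho(T_{\gamma_0})$ is central, and this constrains nothing: already in $SL_2$, any semisimple element multiplied by a suitable conjugate of itself is the identity. To genuinely pin the eigenvalues down one must use that the conjugating elements come from an actual chain of curves on the surface, bring in the braid and lantern relations among the $\rho(T_{c_i})$ and the semisimplicity of $H$, and keep every estimate dependent on $N$ only; making this rigidity effective --- which is exactly why it pays to reduce first to the single group $\Gamma_{3,1}$, and hence to one fixed representation scheme of finite type --- is where the real work lies.
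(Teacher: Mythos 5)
There is a genuine gap, and you have located it yourself: the step that forces the eigenvalues of $\rho(T_{\gamma_0})$ to be roots of unity of order bounded by a function of $N$ is never actually carried out. The ``specialization argument'' and the plan to play the chain relation off against the braid relations are only named, not executed, and as you observe the chain relation alone constrains nothing. Your secondary reduction is also unsound as stated: writing a separating twist $T_\delta$ as a word in conjugates of $T_{\gamma_0}^{\pm1}$ via chain or lantern relations does not bound the order of $\rho(T_\delta)$ even once $\rho(T_{\gamma_0})^p=1$ is known, since a product of finite-order semisimple matrices in $GL_N(\C)$ need not have finite order, nor even eigenvalues that are roots of unity; semisimplicity of $\rho(T_\delta)$ does not rescue this. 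So both the reduction to a single non-separating twist and the key quasi-unipotence step are missing.

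The paper (following Aramayona--Souto) closes exactly this gap with a short argument that treats every simple closed curve $\gamma$ uniformly and that you should compare with your use of perfectness. Cut $\Sigma_{g,k}^r$ along $\gamma$; since $g\geq 3$, some component $S$ of the cut surface has genus at least $2$. The twist $T_\gamma$ is a boundary twist of $S$, hence central in $\Gamma(S)$, so each eigenspace $W$ of the diagonalizable matrix $\rho(T_\gamma)$ is invariant under $\rho(\Gamma(S))$. Then $x\mapsto\det\bigl(\rho(x)|_W\bigr)$ is a character of $\Gamma(S)$, and since $H_1(\Gamma(S);\Q)=0$ its values are roots of unity of order dividing $10$. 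Applying this to $x=T_\gamma$ itself gives $\lambda^{10\dim W}=1$ for the eigenvalue $\lambda$ on $W$, so every eigenvalue of $\rho(T_\gamma)$ is a root of unity of order at most $10\dim V$, and diagonalizability yields $\rho(T_\gamma)^p=1$ for a $p$ depending only on $\dim V$. The finite abelianization of the stabilizer of the \emph{cut} surface is thus used directly on each eigenspace to pin down each eigenvalue individually --- this is the idea your proposal lacks, and it replaces both your conjugacy/relation reduction and your specialization step. If you want to salvage your outline, this eigenspace-invariance observation is the lemma you need to insert where you currently write that ``the real work lies.''
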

\begin{proof}
The proof given by  Aramayona--Souto (\cite{AS}) in the case where $\rho(\Gamma_{g,k}^r)$ 
contains no unipotent elements actually is valid for all unipotent-free representations. 
We sketch below the main argument, for the sake of completeness. 
Let $\rho:\Gamma_{g,k}^r\to G_{\C}\subset GL(V)$ be a unipotent-free representation. 
If $\gamma$ is a simple curve on a $\Sigma_{g,k}^r$ then at least one component of 
the surface $S$ obtained by cutting along $\gamma$ has genus larger than or equal to $2$.
Note further that the Dehn twist $T_{\gamma}$ along a boundary curve $\gamma$ of 
the surface $S$ belongs to the center of the mapping class groups $\Gamma(S)$.
If $W$ is an eigenspace for $\rho(T_{\gamma})$, then $W$ must be invariant by 
$\rho(\Gamma(S))$. Therefore we obtain a homomorphism $\Gamma(S)\to \C^*$ 
sending $x\in \Gamma(S)$ into $\det(\rho(x)|_{W})$.  Since $H_1(\Gamma(S),\Q)=0$ 
we derive that this homomorphism has image contained in the group of roots of unity of
order 10. In particular, eigenvalues of $\rho(T_{\gamma})$ are roots of unity, whose order is bounded in terms of $\dim V$ alone. Since  $\rho(T_{\gamma})$ is diagonalizable, 
it has finite order dividing some $p$ which only depends on $\dim V$. 

Alternatively, this follows from Bridson's Theorem 2 from \cite{Br} which states 
if $\Gamma_{g,k}^r$ acts by isometries on complete CAT(0) spaces then Dehn twist act either as elliptics or neutral parabolics, the second case being prohibited by the hypothesis. 
\end{proof}

Thus the study of unipotent-free representations of mapping class groups pops out the need of understanding the quotients $\Gamma_{g,k}^r(p)$. The simplest constructions of 
finite-dimensional mapping class group representations yield parabolic matrices for 
Dehn twists, as it is the case for the homological ones. The first interesting examples 
of unitary representations arise in topological quantum field theory, by means of the 
methods pioneered by Reshetikhin and Turaev and further Turaev and Viro. Later one 
observed that various classical constructions, as the Burau representations of braid groups
and homological representations of coverings, as studied by Looijenga (\cite{Loo}), can lead to unipotent-free representations.  Note that, generically, the groups 
 $\Gamma_{g,k}^r(p)$ are infinite (see \cite{F}). 
Recently, new methods from combinatorial group theory permitted to prove that 
for any $g,r$ there exists some $k_0$ such that $\Gamma_{g}^r(p)$ are acylindrically hyperbolic 
(see \cite{DHS}) and also hierarchically hyperbolic 
(see \cite{BHMS}), when $k_0$ divides $p$.

\subsection{Finite index subgroups of $\Gamma_g(p)$}
Let $\mathcal I_{g,k}$ be the $k$-th Johnson subgroup of $\Gamma_g$; in particular   
$\mathcal I_{g,1}=\mathcal T_g$ is the Torelli group and
$\mathcal I_{g,2}=\mathcal K_g$ is the Johnson kernel. 
The following  lemma was already used in \cite{F13} and we record it here for further use:  

\begin{lemma}\label{fitorelli}
The Torelli group quotient $\mathcal T_{g,k}^r(p)\subseteq \Gamma_{g,k}^r(p)$
has finite index. 
\end{lemma}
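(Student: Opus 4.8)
The plan is to transport the question to the symplectic group via the homological representation and then to invoke a rigidity statement. First I would use that $\mathcal T_{g,k}^r$ is, by definition, the kernel of the action of $\Gamma_{g,k}^r$ on $H_1(\Sigma_{g,k}^r;\Z)$, so that $\Gamma_{g,k}^r/\mathcal T_{g,k}^r$ is identified with the symplectic group $Sp(2g,\Z)$ — or, depending on the precise convention for the Torelli group of a surface with punctures or with several boundary components, with an extension of $Sp(2g,\Z)$ by the finitely generated abelian peripheral part recording the action on the classes of the boundary curves and of the loops around the punctures. The point on which everything hinges is the behaviour of Dehn twists: a twist $T_\gamma$ along a non-separating simple closed curve $\gamma$ is sent to the symplectic transvection $\tau_{[\gamma]}$ determined by the primitive class $[\gamma]$, whereas a twist along a separating curve is null-homologous, belongs to $\mathcal T_{g,k}^r$, and maps to the identity. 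Hence the image $N$ of $\Gamma_{g,k}^r[p]$ in $Sp(2g,\Z)$ is the normal subgroup generated by the $p$-th powers $\tau_v^p$ of symplectic transvections, and since $Sp(2g,\Z)$ acts transitively on primitive vectors this is the normal closure of the single element $\tau_v^p$.

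The isomorphism theorems then give $\Gamma_{g,k}^r(p)/\mathcal T_{g,k}^r(p)\cong (\Gamma_{g,k}^r/\mathcal T_{g,k}^r)/N$, so it suffices to show that $N$ has finite index. For $g\ge 2$ — which is the relevant range, the assertion failing for $g\le 1$ — the group $Sp(2g,\Z)$ is an irreducible lattice in the simple Lie group $Sp(2g,\R)$ of real rank $g\ge 2$, so by Margulis's normal subgroup theorem every normal subgroup of $Sp(2g,\Z)$ is either central, hence contained in $\{\pm\mathrm{Id}\}$, or of finite index. Now $\tau_v^p$ is a non-trivial unipotent element — one has $(\tau_v^p-\mathrm{Id})^2=0$ because the symplectic form is alternating — so $N$ is infinite and non-central, whence $[Sp(2g,\Z):N]<\infty$. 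Alternatively, and more sharply, by the congruence subgroup property of $Sp(2g,\Z)$ (Bass--Milnor--Serre) the subgroup $N$ is a congruence subgroup, essentially the principal congruence subgroup of level $p$, which yields both the finiteness of the index and an explicit value for it. This already proves the lemma for closed surfaces and for $\Sigma_{g,1}$.

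In the remaining cases I would still have to control the abelian peripheral part $A$, that is, to check that $N\cap A$ has finite index in $A$. The clean way is a direct computation: conjugating an element of the form $\tau_v^p\oplus\mathrm{id}_A$ by an element of $\Gamma_{g,k}^r/\mathcal T_{g,k}^r$ with non-trivial peripheral component produces an element of $A$ whose entries are all divisible by $p$ and are $p$ times a matrix built out of $v$; letting $v$ range over a generating family of transvections, these elements span a finite-index subgroup of $A$. One can instead try to reduce to the closed case through the Birman exact sequences, using that the kernel of the forgetful map $\Gamma_{g,k}^r\to\Gamma_g$ — generated by point-pushing maps and boundary twists, which act trivially on $H_1(\Sigma_{g,k}^r)$ — is contained in $\mathcal T_{g,k}^r$. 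I expect that this bookkeeping with the peripheral and point-pushing contribution is the only real obstacle: once the surface is closed or is $\Sigma_{g,1}$, for $g\ge 2$ the statement is an immediate consequence of Margulis's theorem, and the general case follows from it together with the routine (if slightly tedious) identification of the abelian part.
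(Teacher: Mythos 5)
Your proof is correct, and it shares the paper's first reduction: both arguments pass to the image of $\Gamma_{g,k}^r$ in $\Aut(H_1(\Sigma_{g,k}^r;\Z))$, note that nonseparating Dehn twists map to transvections while separating and boundary twists land in the Torelli group, and thereby identify $\Gamma_{g,k}^r(p)/\mathcal T_{g,k}^r(p)$ with the quotient of that image by the normal closure $N$ of the $p$-th powers of transvections. The divergence is in how one sees that $N$ has finite index. The paper asserts that the $p$-th powers of transvections generate the level-$p$ congruence subgroup, so that the quotient embeds in the finite group $\Aut^*(H_1;\Z/p\Z)$; you instead apply Margulis's normal subgroup theorem to $Sp(2g,\Z)$, $g\ge 2$, after checking that $N$ is non-central because $\tau_v^p$ is a nontrivial unipotent. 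Your route is softer and sidesteps the (nontrivial, uncited) generation statement, at the price of heavier machinery and of the extra bookkeeping for the abelian peripheral part when there are several boundary components or punctures — a step you sketch plausibly but do not complete, and which the paper silently buries in the notation $\Aut^*(H_1(\Sigma_{g,k}^r;\Z))$. Two minor points: your alternative ``by CSP, $N$ is a congruence subgroup'' is circular as phrased, since the congruence subgroup property applies to subgroups already known to have finite index (Bass--Milnor--Serre do prove the normal-subgroup classification directly, so the conclusion stands); and you are right to flag that the statement requires $g\ge 2$ — for $g=1$ the Torelli group is trivial while $\Gamma_1(p)$ is infinite for large $p$ — a hypothesis the lemma leaves implicit.
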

\begin{proof}
Dehn twists have finite order dividing $p$ in the image. 
Recall that $\Gamma_{g,k}^r$ is generated by Dehn twists and braids (half-Dehn twists).
Dehn twists have finite order dividing $p$ in $\Gamma_{g,k}^r(p)$. 
Now Dehn twists act as transvections on $H_1(\Sigma_{k,k}^r;\Z)$ and they generate 
the group $\Aut^*(H_1(\Sigma_{k,k}^r;\Z))$ of automorphisms preserving the 
intersection form. Then $p$-th powers of Dehn twists generate the 
congruence subgroup 
\[\ker(\Aut^*(H_1(\Sigma_{k,k}^r;\Z))\to \Aut^*(H_1(\Sigma_{k,k}^r;\Z/p\Z))).\]
The exact sequence 
\[ 1\to \mathcal T_{g,k}^r \to \Gamma_{g,k}^r\to \Aut^*(H_1(\Sigma_{k,k}^r;\Z))\]
induces an exact sequence 
\[1\to \mathcal T_{g,k}^r(p)\to \Gamma_{g,k}^r(p)\to \Aut^*(H_1(\Sigma_{k,k}^r;\Z/p\Z)).\]
This implies that  $T_{g,k}^r(p)$ is a finite index normal subgroup of $\Gamma_{g,k}^r(p)$. 
\end{proof}

We need the following well-known lemma concerning nilpotent groups (see e.g. \cite{Clement}, Corollary 2.10): 

\begin{lemma}\label{finitenilpotent}
Let $N$ be a nilpotent group. Then $N$ is finite if and only if $H_1(N)$ is finite. 
\end{lemma}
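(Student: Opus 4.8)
The plan is as follows. One implication is immediate: if $N$ is finite then $H_1(N)=N/[N,N]$ is a quotient of $N$, hence finite. For the converse, assume $A:=H_1(N)=N/[N,N]$ is finite, and let $N=\gamma_1(N)\supseteq\gamma_2(N)\supseteq\cdots\supseteq\gamma_{c+1}(N)=1$ be the lower central series, with $\gamma_{i+1}(N)=[\gamma_i(N),N]$ and $c$ the nilpotency class of $N$. It suffices to show that each successive quotient $\gamma_i(N)/\gamma_{i+1}(N)$ is finite; once this is known, $N$ is built from finitely many finite pieces and is therefore finite.

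I would prove finiteness of the $\gamma_i(N)/\gamma_{i+1}(N)$ by induction on $i$, the case $i=1$ being exactly the hypothesis $\gamma_1(N)/\gamma_2(N)=A$. For the inductive step, recall that the commutator map $(x,y)\mapsto[x,y]$ descends to a well-defined biadditive map
\[
\gamma_1(N)/\gamma_2(N)\ \times\ \gamma_{i-1}(N)/\gamma_i(N)\ \longrightarrow\ \gamma_i(N)/\gamma_{i+1}(N),
\]
this being the standard construction of the associated graded Lie ring of $N$; moreover it is surjective, since $\gamma_i(N)=[\gamma_{i-1}(N),N]$ is generated by the commutators $[y,x]$ with $y\in\gamma_{i-1}(N)$, $x\in N$. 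Hence there is an induced surjection of abelian groups
\[
A\otimes_{\Z}\bigl(\gamma_{i-1}(N)/\gamma_i(N)\bigr)\ \twoheadrightarrow\ \gamma_i(N)/\gamma_{i+1}(N).
\]
By the inductive hypothesis $\gamma_{i-1}(N)/\gamma_i(N)$ is finite, and the tensor product over $\Z$ of two finite abelian groups is finite; therefore $\gamma_i(N)/\gamma_{i+1}(N)$ is finite, which completes the induction and hence the proof.

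The only delicate point is checking that the commutator map is well defined on the indicated quotients, that its image lies in $\gamma_i(N)/\gamma_{i+1}(N)$, and that it is biadditive and surjective; all of this follows from the standard commutator identities together with the inclusions $[\gamma_i(N),\gamma_j(N)]\subseteq\gamma_{i+j}(N)$, so it is routine rather than an obstacle. (Alternatively, one may first observe that a nilpotent group with finite — a fortiori finitely generated — abelianization is itself finitely generated, so that each $\gamma_i(N)/\gamma_{i+1}(N)$ is a finitely generated abelian group, and then run the same induction; either route works.)
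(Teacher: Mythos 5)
Your proof is correct: the surjection $A\otimes_{\Z}(\gamma_{i-1}(N)/\gamma_i(N))\twoheadrightarrow \gamma_i(N)/\gamma_{i+1}(N)$ induced by the commutator bracket of the associated graded Lie ring is exactly the standard argument, and the well-definedness and biadditivity issues you flag are indeed routine consequences of $[\gamma_i(N),\gamma_j(N)]\subseteq\gamma_{i+j}(N)$. The paper itself gives no proof, citing the result as well known (Clement--Majewicz--Zyman, Cor.\ 2.10), and your argument is the same one found there.
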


If $G$ is a group we denote by $\gamma_kG$ the lower central series of $G$, $\gamma_1(G)=G$ and 
$\gamma_{k+1}(G)=[G,\gamma_k(G)]$.
For the sake of simplicity we restrict now to the case of closed surfaces. 

\begin{proposition}\label{lcseries}
For every $k\geq 1$ and $g\geq 3$, the lower central series terms 
$\gamma_k \mathcal T_g(p)$ have finite index in $\mathcal T_g(p)$. 
In particular the  images $\mathcal I_{g,k}(p)$ of the higher Johnson subgroups $\mathcal I_{g,k}\subset \mathcal T_g$  are also finite index subgroups of $\Gamma_g(p)$. 
\end{proposition}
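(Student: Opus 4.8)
The plan is to reduce the whole statement to the single assertion that $H_1(\mathcal{T}_g(p);\Z)$ is finite, and then let Lemma~\ref{finitenilpotent} do the rest. First I would record the elementary fact that, for any group $N$ and any $k\geq 2$, the abelianization of the nilpotent quotient $N/\gamma_kN$ equals $N/\gamma_2N=H_1(N)$, since $[N/\gamma_kN,\,N/\gamma_kN]=\gamma_2N/\gamma_kN$ (using $\gamma_kN\subseteq\gamma_2N$). Applying this with $N=\mathcal{T}_g(p)$ and invoking Lemma~\ref{finitenilpotent}, the nilpotent group $\mathcal{T}_g(p)/\gamma_k\mathcal{T}_g(p)$ is finite for every $k\geq 1$ (the case $k=1$ being vacuous) exactly when $H_1(\mathcal{T}_g(p))$ is finite, and the former finiteness is precisely the assertion that $\gamma_k\mathcal{T}_g(p)$ has finite index in $\mathcal{T}_g(p)$. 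So everything comes down to the finiteness of $H_1(\mathcal{T}_g(p))$.

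To establish that, I would write $\mathcal{T}_g(p)=\mathcal{T}_g/K$ with $K=\mathcal{T}_g\cap\Gamma_g[p]$, so that $H_1(\mathcal{T}_g(p))$ is the quotient of $H_1(\mathcal{T}_g;\Z)$ by the image of $K$. Here I would use Johnson's theorems: for $g\geq 3$ the Torelli group $\mathcal{T}_g$ is generated by finitely many bounding pair maps $f_i=T_{\gamma_i}T_{\gamma_i'}^{-1}$, and in particular $H_1(\mathcal{T}_g;\Z)$ is a finitely generated abelian group. The key point is that the curves $\gamma_i,\gamma_i'$ are disjoint, so $T_{\gamma_i}$ and $T_{\gamma_i'}$ commute and $f_i^{\,p}=T_{\gamma_i}^{\,p}\,T_{\gamma_i'}^{-p}\in\Gamma_g[p]$; since $f_i^{\,p}$ also lies in $\mathcal{T}_g$, it lies in $K$. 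The class of $f_i^{\,p}$ in $H_1(\mathcal{T}_g;\Z)$ is $p\,[f_i]$, and as the $[f_i]$ generate $H_1(\mathcal{T}_g;\Z)$, the image of $K$ contains $p\,H_1(\mathcal{T}_g;\Z)$. Hence $H_1(\mathcal{T}_g(p))$ is a quotient of the finite group $H_1(\mathcal{T}_g;\Z)/p\,H_1(\mathcal{T}_g;\Z)$, which yields the first assertion of the proposition.

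For the final ``in particular'', I would invoke the standard fact that the Johnson filtration $\{\mathcal{I}_{g,k}\}_{k\geq 1}$ is a central series with $\mathcal{I}_{g,1}=\mathcal{T}_g$, so $\gamma_k\mathcal{T}_g\subseteq\mathcal{I}_{g,k}$; passing to images in $\Gamma_g(p)$ yields $\gamma_k\mathcal{T}_g(p)\subseteq\mathcal{I}_{g,k}(p)\subseteq\mathcal{T}_g(p)$. By the first part $\gamma_k\mathcal{T}_g(p)$ has finite index in $\mathcal{T}_g(p)$, and by Lemma~\ref{fitorelli} $\mathcal{T}_g(p)$ has finite index in $\Gamma_g(p)$; combining these with the inclusions above shows that $\mathcal{I}_{g,k}(p)$ has finite index in $\Gamma_g(p)$.

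The step I expect to be the genuine obstacle — and the reason the hypothesis $g\geq 3$ enters — is the finiteness of $H_1(\mathcal{T}_g(p))$, because it rests on Johnson's deep finiteness theorems (finite generation of $\mathcal{T}_g$ by bounding pair maps, equivalently finite generation of $H_1(\mathcal{T}_g;\Z)$), both of which fail in genus $2$. Everything else is formal: the reduction via Lemma~\ref{finitenilpotent}, the computation $f_i^{\,p}=T_{\gamma_i}^{\,p}\,T_{\gamma_i'}^{-p}$ that places $f_i^{\,p}$ inside $\Gamma_g[p]$, and the sandwiching argument for the Johnson subgroups.
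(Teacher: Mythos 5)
Your proof is correct and follows essentially the same route as the paper: finiteness of $H_1(\mathcal T_g(p))$ via bounding-pair generators whose $p$-th powers land in $\Gamma_g[p]$, Lemma~\ref{finitenilpotent} applied to the nilpotent quotients $\mathcal T_g(p)/\gamma_k\mathcal T_g(p)$, and the inclusion $\gamma_k\mathcal T_g\subseteq\mathcal I_{g,k}$ for the Johnson subgroups. The only cosmetic difference is that you invoke Johnson's finite generation of $\mathcal T_g$ for $g\geq 3$ to control $H_1(\mathcal T_g;\Z)$, whereas the paper obtains finite generation of $\mathcal T_g(p)$ from Lemma~\ref{fitorelli} (finite index in the finitely generated group $\Gamma_g(p)$) and then observes that $H_1(\mathcal T_g(p))$ is generated by elements of order dividing $p$.
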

\begin{proof}
The Torelli group $\mathcal T_g$ is generated by BP pairs, when $g\geq 2$.
Lemma \ref{fitorelli} implies that $\mathcal T_g(p)$ is finitely generated, because 
it is a finite index subgroup of the finitely generated group $\Gamma_g(p)$. 
In particular  $H_1({\mathcal T_g(p)})$ is of finite type. 
Since  images of BP pairs into $\Gamma_g(p)$ have order $p$, we derive that 
the group $H_1({\mathcal T_g(p)})$ has a generating set consisting of elements of order $p$ 
and hence is finite.
Thus $\mathcal T_g(p)/\gamma_k \mathcal T_g(p)$ is a finitely generated nilpotent group  whose 
abelianization is finite. It is therefore finite, by Lemma \ref{finitenilpotent}. 

Further, recall that the Johnson filtration is a central descending series with torsion-free quotients (see \cite{Hain}, Prop. 14.5) and in particular 
\[ \gamma_k(\mathcal T_g)\subseteq \mathcal I_{g,k}.\]
Now $\mathcal T_g/\gamma_k\mathcal T_g$ surjects onto 
$\mathcal T_g/\mathcal I_{g,k}$ and hence the latter is a nilpotent group. 
This implies that $\mathcal T_g(p)/\mathcal I_{g,k}(p)$ is also a nilpotent group. 
Its abelianization is a quotient of $H_1({\mathcal T_g(p)})$ and hence $\mathcal T_g(p)/\mathcal I_{g,k}(p)$ is finite by 
Lemma \ref{finitenilpotent}. 
\end{proof}

We will show later that the images $G(p)$ of reducible subgroups $G$ 
of $\Gamma_{g,k}^r$ are of infinite index in $\Gamma_{g,k}^r(p)$. 
Moreover, there are also irreducible subgroups  $G$ of  $\Gamma_{g,k}^r$ which 
not virtually abelian, such that $G(p)$ has infinite index in $\Gamma_{g,k}^r(p)$, as we 
shall see in Section \ref{section:handlebody}.
However, the following question seems relevant: 

\begin{question}
Are there irreducible subgroups $G$ of $\Gamma_{g}^r$ 
which are neither virtually abelian nor virtually conjugate into a subgroup of mapping 
classes which extend to a 3-manifold (e.g. a handlebody group) such that 
$G(p)$ is of infinite index in $\Gamma_{g}^r(p)$ for large enough $p$? 
\end{question}

Propositions  \ref{unipotentfree}  and  \ref{lcseries} imply immediately the following generalization of the result obtained in \cite{F13} for quantum representations: 
\begin{proposition}
Let $\rho:\Gamma_g\to G$ be a unipotent-free representation into a  Lie group $G$. Assume $g\geq 3$. 
Then $\rho(\gamma_k\mathcal T_g)$ and hence also $\rho(\mathcal I_{g,k})$ have finite index in $\rho(\Gamma_g)$. 
\end{proposition}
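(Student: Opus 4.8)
This is a formal corollary of Propositions~\ref{unipotentfree} and~\ref{lcseries}, so the plan is essentially one of bookkeeping. First I would invoke Proposition~\ref{unipotentfree}: since $g\geq 3$ and $\rho$ is unipotent-free, there is an integer $p$ such that $\rho$ factors as $\rho=\overline\rho\circ\pi$, where $\pi\colon\Gamma_g\to\Gamma_g(p)$ is the canonical projection and $\overline\rho\colon\Gamma_g(p)\to G$ is the induced homomorphism; in particular $\rho(\Gamma_g)=\overline\rho(\Gamma_g(p))$, $\rho(\gamma_k\mathcal T_g)=\overline\rho\bigl(\pi(\gamma_k\mathcal T_g)\bigr)$ and $\rho(\mathcal I_{g,k})=\overline\rho\bigl(\pi(\mathcal I_{g,k})\bigr)$.

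Next I would record two elementary functoriality facts. On one hand, the lower central series is preserved by surjective homomorphisms, so $\pi(\gamma_k\mathcal T_g)=\gamma_k\bigl(\pi(\mathcal T_g)\bigr)=\gamma_k\mathcal T_g(p)$, while $\pi(\mathcal I_{g,k})=\mathcal I_{g,k}(p)$ by the very definition of the $G(p)$ notation. On the other hand, a surjective homomorphism sends finite-index subgroups to finite-index subgroups (the index can only decrease). Applying $\overline\rho$, the proposition is thus reduced to the assertion that $\gamma_k\mathcal T_g(p)$ and $\mathcal I_{g,k}(p)$ have finite index in $\Gamma_g(p)$.

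That assertion, however, is precisely what Proposition~\ref{lcseries} provides, together with Lemma~\ref{fitorelli}: Lemma~\ref{fitorelli} gives $[\Gamma_g(p):\mathcal T_g(p)]<\infty$ and Proposition~\ref{lcseries} gives $[\mathcal T_g(p):\gamma_k\mathcal T_g(p)]<\infty$, so by transitivity $\gamma_k\mathcal T_g(p)$ is of finite index in $\Gamma_g(p)$; Proposition~\ref{lcseries} also records directly that $\mathcal I_{g,k}(p)$ is of finite index there. Transporting through the surjection $\overline\rho$ then gives that $\rho(\gamma_k\mathcal T_g)$ and $\rho(\mathcal I_{g,k})$ are of finite index in $\rho(\Gamma_g)$. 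The ``hence'' in the statement is the observation that $\gamma_k\mathcal T_g\subseteq\mathcal I_{g,k}$ (the Johnson filtration being a central series, hence containing the lower central series, as in the proof of Proposition~\ref{lcseries}), so that $\rho(\gamma_k\mathcal T_g)\subseteq\rho(\mathcal I_{g,k})\subseteq\rho(\Gamma_g)$ and the finiteness of the first index forces that of the last. I expect no real obstacle here; the only point needing a word of care is the passage from a Lie group to a linear algebraic group so that Proposition~\ref{unipotentfree} applies literally --- for $G$ compact one embeds it into some $U(n)\subset GL(n,\C)$, and in general one may replace $G$ by the Zariski closure of $\rho(\Gamma_g)$, the property that Dehn twists act diagonalizably being inherited.
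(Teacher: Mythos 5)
Your proposal is correct and is exactly the argument the paper intends: the paper itself gives no written proof, asserting only that the statement follows immediately from Propositions~\ref{unipotentfree} and~\ref{lcseries}, and your write-up fills in precisely those bookkeeping steps (factoring through $\Gamma_g(p)$, functoriality of the lower central series under surjections, and transport of finite index), together with the sensible remark on passing from a Lie group to a linear algebraic group.
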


\subsection{K\"ahler groups and rank 1 representations} 
It was proved in (\cite{AF}, Thm. 5 see also Thm. 15 due to Pikaart and Jong) that: 
\begin{proposition}\label{kahler}
For every $g\geq 2$ the groups $\Gamma_g^r(p)$ are virtually K\"ahler, indeed they have   
finite index subgroups which are fundamental groups of smooth complex projective varieties. In particular, the images $\mathcal I_{g,2}(p)$ of the Johnson kernel are K\"ahler groups, when $p$ is odd.  
\end{proposition}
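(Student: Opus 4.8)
The plan is to realize a finite-index subgroup of $\Gamma_g^r(p)$, and ultimately $\mathcal I_{g,2}(p)$ itself, as the fundamental group of a smooth complex projective variety, by compactifying moduli spaces of curves carrying a level structure. The starting point is Theorem~15 of \cite{AF}, due to Pikaart and de Jong: there exist \emph{non-abelian} level structures --- finite-index normal subgroups $K\trianglelefteq\Gamma_g^r$ arising from suitable finite $G$-covers of the curve --- for which the associated moduli space $\overline{\mathcal M}[K]$ of stable curves with level structure is a smooth complex projective variety, with normal crossings boundary $D=\overline{\mathcal M}[K]\setminus\mathcal M[K]$ and with ramification order $p$ over every boundary divisor of $\overline{\mathcal M}_{g,r}$. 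As Teichm\"uller space is contractible and $K$ is torsion-free, $\mathcal M[K]$ is a $K(K,1)$, so $\pi_1(\mathcal M[K])=K$.

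I would then compute $\pi_1(\overline{\mathcal M}[K])$. Since $D$ has normal crossings, van Kampen gives $\pi_1(\overline{\mathcal M}[K])=K/\langle\langle\mu_i\rangle\rangle$, where the $\mu_i$ are the meridians of the irreducible components of $D$. A component of $D$ lying over the boundary divisor $\Delta_j$ of $\overline{\mathcal M}_{g,r}$, with ramification order $p$, contributes as meridian the $p$-th power $T_\gamma^{p}$ of a Dehn twist along a vanishing cycle $\gamma$ of $\Delta_j$, viewed inside $\Gamma_g^r$ via $K\hookrightarrow\Gamma_g^r$. Because the vanishing cycles of the divisors $\Delta_j$ realize all topological types of simple closed curves and $p$-th powers of Dehn twists along curves of a fixed type are pairwise conjugate in $\Gamma_g^r$, one obtains $T_\gamma^p\in K$ for \emph{every} simple closed curve $\gamma$; hence $\Gamma_g^r[p]\subseteq K$, the normal closure $\langle\langle\mu_i\rangle\rangle_K$ is exactly $\Gamma_g^r[p]$, and $\pi_1(\overline{\mathcal M}[K])=K/\Gamma_g^r[p]$ is precisely the image of $K$ in $\Gamma_g^r(p)$, a finite-index subgroup. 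This establishes that $\Gamma_g^r(p)$ is virtually K\"ahler and that the finite-index subgroup $K/\Gamma_g^r[p]$ is the fundamental group of a smooth projective variety.

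For the last assertion I would combine this with Proposition~\ref{lcseries} (and with Lemma~\ref{fitorelli} when $g=2$): the image $\mathcal I_{g,2}(p)$ of the Johnson kernel is a finite-index normal subgroup of $\Gamma_g(p)$, so $Q:=\Gamma_g(p)/\mathcal I_{g,2}(p)$ is finite. The task is then to realize $\mathcal I_{g,2}(p)$ \emph{on the nose} --- not merely up to finite index --- as the fundamental group of a smooth projective variety, by running the above with a level structure adapted to the quotient $\Gamma_g\to Q$. Here the hypothesis that $p$ be odd is what makes the relevant level data (of theta-characteristic type) yield a smooth compactification with uniform boundary ramification: at the prime $2$ the element $-\mathrm{Id}\in Sp_{2g}$ survives and the parity of theta characteristics obstructs both the rigidification and the control of the ramification.

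The step I expect to be the main obstacle is finding a level structure that simultaneously (i) is fine enough for the Pikaart--de Jong smoothness theorem to apply, (ii) has uniform boundary ramification order $p$, so that the meridian relations carve out exactly $\Gamma_g^r[p]$ and not a coarser congruence subgroup (nor, in a degenerate case, a subgroup absorbing all separating twists and collapsing the fundamental group), and, for the Johnson-kernel statement, (iii) induces precisely the finite quotient $Q=\Gamma_g(p)/\mathcal I_{g,2}(p)$. Reconciling (i) and (ii) is the essential content of the Pikaart--de Jong input, and the parity hypothesis is exactly what allows (iii) to be achieved without destroying (i) and (ii).
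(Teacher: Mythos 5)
The paper gives no argument for this proposition beyond the citation of (\cite{AF}, Thm.~5) and of the Pikaart--de Jong compactifications, and your reconstruction of the first clause is precisely the argument of that source: take a torsion-free Pikaart--de Jong level $K\trianglelefteq\Gamma_g^r$ with $\overline{\mathcal M}[K]$ smooth projective, normal crossings boundary and uniform ramification $p$ over each $\Delta_j$; compute $\pi_1(\overline{\mathcal M}[K])$ by killing meridians; and identify the normal closure of the meridians with $\Gamma_g^r[p]$, using that the components of $D$ over a fixed $\Delta_j$ contribute all $\Gamma_g^r$-conjugates of $T_{\gamma_j}^p$ and that the vanishing cycles exhaust all topological types of essential simple closed curves. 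This part is correct, and the issues you flag -- reconciling smoothness with uniform boundary ramification -- are exactly the content of the Pikaart--de Jong input.

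The gap is in the ``in particular'' clause. Your plan to rerun the construction with a level structure ``adapted to the quotient $Q=\Gamma_g(p)/\mathcal I_{g,2}(p)$'' so as to obtain $\mathcal I_{g,2}(p)$ on the nose is not available: the Pikaart--de Jong theorem applies only to the specific levels built from the homology of characteristic covers of the curve, and none of these can induce $Q$, nor even contain the Johnson kernel --- a separating twist acts with order exactly $p$ on the homology of the relevant iterated cover (this is precisely what makes the boundary ramification uniform), so $T_\delta\notin K$ while $T_\delta\in\mathcal I_{g,2}$. For the same reason the most obvious alternative reduction also fails as stated: since $\Gamma_g^r[p]\subseteq K$, a containment $\mathcal I_{g,2}(p)\subseteq K(p)$ (which, combined with Proposition \ref{lcseries} and the fact that a finite-index subgroup of the fundamental group of a smooth projective variety is again such a group via the finite \'etale cover) would force $\mathcal I_{g,2}\subseteq K$, which is false. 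So something genuinely different is needed to pass from ``virtually K\"ahler'' to the K\"ahlerness of $\mathcal I_{g,2}(p)$ itself --- this is where the parity of $p$ actually intervenes in \cite{AF}, and not through theta-characteristics as you speculate. As written, your proposal establishes the first clause but leaves the second unproved.
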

\begin{remark}
In recent work \cite{EF} the authors were able to show that $\Gamma_g^r(p)$ are 
actually K\"ahler. 
\end{remark}  

An immediate consequence of the alternative proved by Delzant (see \cite{Delzant}) 
for K\"ahler groups states then: 

\begin{proposition}
Either any solvable quotient of any finite index subgroup of $\mathcal I_{g,2}(p)$ 
is virtually nilpotent or else $\mathcal I_{g,2}(p)$  has a finite index subgroup which surjects onto a nonabelian surface group.  
\end{proposition}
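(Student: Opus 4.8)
The plan is to combine Proposition~\ref{kahler} with Delzant's dichotomy for Kähler groups; the remaining work is bookkeeping about the finite index quantifiers. First I would record that $\mathcal I_{g,2}(p)$ --- and, more generally, every finite index subgroup of it --- is a Kähler group. By Proposition~\ref{kahler} (and, for even $p$, by the Remark following it together with Proposition~\ref{lcseries}, which realizes $\mathcal I_{g,2}(p)$ as a finite index subgroup of $\Gamma_g(p)$) the group $\mathcal I_{g,2}(p)$ is the fundamental group of a smooth complex projective variety. Since a connected finite covering of such a variety is again a smooth complex projective variety, every finite index subgroup $H\subseteq \mathcal I_{g,2}(p)$ is Kähler as well; it is also finitely generated, being of finite index in the finitely generated group $\Gamma_g(p)$.

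Next I would invoke Delzant's theorem (\cite{Delzant}) in the following form: if $\Gamma$ is a Kähler group, then either every solvable quotient of $\Gamma$ is virtually nilpotent, or a finite index subgroup of $\Gamma$ surjects onto a nonabelian closed surface group. Granting this, the Proposition is immediate. Assume the first alternative in the statement fails, so that there is a finite index subgroup $H\subseteq \mathcal I_{g,2}(p)$ together with a surjection of $H$ onto a solvable group $Q$ that is not virtually nilpotent. Since $H$ is Kähler and finitely generated, the first alternative of Delzant's dichotomy fails for $H$, hence $H$ --- and therefore $\mathcal I_{g,2}(p)$ --- has a finite index subgroup surjecting onto a nonabelian surface group, which is precisely the second alternative of the Proposition.

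The step I would want to spell out rather than gloss over is the passage to the form of Delzant's theorem used above. The result is originally phrased in terms of the Bieri--Neumann--Strebel invariant and of holomorphic pencils over orbifold Riemann surfaces: a compact Kähler manifold whose fundamental group admits a solvable quotient that is not virtually nilpotent carries, after a finite covering, a surjective holomorphic map with connected fibres onto an orbifold curve of hyperbolic type. One then has to check that this curve is compact (it is, being the image of a surjective holomorphic map from a compact manifold), so that its orbifold fundamental group is a cocompact Fuchsian group, and to apply Selberg's lemma together with the fact that every closed hyperbolic $2$-orbifold is finitely covered by a closed surface of genus $\geq 2$, in order to replace the orbifold group by a genuine nonabelian surface group. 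This orbifold-to-surface-group reduction, and the correct reading of Delzant's conclusion, is the only genuine obstacle; everything else is formal.
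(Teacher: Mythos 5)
Your proposal is correct and follows essentially the same route as the paper, which simply states the proposition as an immediate consequence of Proposition~\ref{kahler} together with Delzant's alternative for K\"ahler groups and gives no further argument. The details you supply — that finite index subgroups of a K\"ahler group are K\"ahler, and the reduction from a hyperbolic orbicurve group to a genuine nonabelian surface group via a finite-index torsion-free subgroup — are exactly the bookkeeping the paper leaves implicit.
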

It is presently unknown which one of the two alternatives above holds. However, if 
$\Gamma_g$ ($g\geq 3$) does not virtually surject onto $\Z$, then the second alternative 
cannot hold. We expect that the first alternative only could hold when all solvable quotients  
are actually finite. This could be proved if we could promote the virtual nilpotence above to a genuine nilpotence.

\begin{proposition}\label{rank1}
Let $f:\Gamma_g\to \Lambda$ be a homomorphism in a torsion-free uniform rank 1 lattice $\Lambda$ 
in $SO(1,n)$, with $n\geq 3$. 
If $g\geq 3$, then $f$ is trivial, i.e. with finite image. 
\end{proposition}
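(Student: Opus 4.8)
The plan is to combine the rigidity coming from K\"ahler group theory (Proposition~\ref{kahler}) with the finiteness results about Johnson subgroups (Proposition~\ref{lcseries}), reducing the question to the well-understood structure of the Torelli group. Suppose $f\colon \Gamma_g\to\Lambda$ has infinite image, where $\Lambda$ is a torsion-free uniform lattice in $SO(1,n)$, $n\geq 3$. The first observation is that $f$ factors through $\Gamma_g(p)$ for a suitable $p$: indeed $\Lambda$ is a discrete torsion-free group acting on real hyperbolic $n$-space, hence on a complete CAT($-1$) (in particular CAT(0)) space, so by Proposition~\ref{unipotentfree} (via the Bridson alternative quoted in its proof) the images of Dehn twists are diagonalizable of bounded finite order, and $f$ descends to some $\bar f\colon\Gamma_g(p)\to\Lambda$. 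Now pass to a finite-index subgroup $\Lambda_0\le\Lambda$ which, by Proposition~\ref{kahler}, is the fundamental group of a smooth complex projective variety; by intersecting with the finite-index Torelli quotient from Lemma~\ref{fitorelli} and replacing $\Gamma_g(p)$ by the finite-index subgroup $\mathcal T_g(p)$ we may assume we have a homomorphism from the K\"ahler group $\mathcal T_g(p)$ into $\Lambda$ (still with infinite image after passing to finite index, if the original image was infinite).

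The heart of the argument is then the Carlette--Toledo / Sampson--type rigidity for K\"ahler groups mapping to rank $1$ lattices in $SO(1,n)$ with $n\geq 3$: such a homomorphism with infinite, non-elementary image must be, up to finite index, a pullback of the fundamental group of a hyperbolic $2$- or $3$-orbifold, and in particular $\mathcal T_g(p)$ (or a finite-index subgroup) would virtually surject onto a closed hyperbolic surface or $3$-manifold group, which is not virtually nilpotent and not virtually abelian. But by Proposition~\ref{lcseries} all terms $\gamma_k\mathcal T_g(p)$ of the lower central series have finite index in $\mathcal T_g(p)$; equivalently $\mathcal T_g(p)$ is virtually nilpotent. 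A virtually nilpotent group cannot surject, even virtually, onto a non-elementary discrete subgroup of $SO(1,n)$ (such a subgroup contains a free group of rank $2$, while virtually nilpotent groups are amenable and of polynomial growth). Hence the image of $\mathcal T_g(p)$ in $\Lambda$ is elementary, i.e. virtually abelian; being also torsion-free virtually nilpotent this is consistent, but we can push further.

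To finish, one upgrades "elementary image'' to "finite image''. An elementary torsion-free subgroup of $SO(1,n)$ is either trivial, infinite cyclic, or virtually $\Z^k$ fixing a point at infinity (a parabolic subgroup). So if the image of $\mathcal T_g(p)$ were infinite, $\mathcal T_g(p)$ would virtually surject onto $\Z$; pulling back, a finite-index subgroup of $\Gamma_g(p)$, hence of $\Gamma_g$, would virtually surject onto $\Z$. This either contradicts $b_1(\Gamma_g)=0$ directly for $g\geq 3$ (since by Proposition~\ref{lcseries} $\mathcal T_g(p)$ has finite abelianization, no finite-index subgroup of $\Gamma_g(p)$ can surject onto $\Z$: one checks the abelianization of any finite-index subgroup of $\Gamma_g(p)$ is finite using Lemma~\ref{fitorelli}, the finiteness of $H_1(\mathcal T_g(p))$, and the fact that $\Aut^*(H_1(\Sigma_g;\Z/p\Z))$ has finite abelianization for $g\geq 3$), or is handled by the same nilpotence argument. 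Therefore the image of $\mathcal T_g(p)$ is finite, whence the image of $\Gamma_g(p)$ is finite (it is an extension of a subgroup of $\Aut^*(H_1(\Sigma_g;\Z/p))$ by this finite group, but $\Gamma_g(p)$ itself need not be finite — rather we conclude $f(\mathcal T_g)$ is finite and then $f$ factors through the arithmetic quotient $\Aut^*(H_1;\Z/p)$, which has property (T) for $g\geq 3$ hence any homomorphism to $SO(1,n)$ has finite image). Thus $f$ has finite image.

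\medskip
\noindent\emph{Main obstacle.} The delicate step is the second paragraph: one needs the precise form of the rigidity statement for K\"ahler (or just $\Gamma_g(p)$-type) groups targeting $SO(1,n)$ lattices and, above all, one must invoke the virtual nilpotence of $\mathcal T_g(p)$ from Proposition~\ref{lcseries} in exactly the right way to rule out non-elementary images; getting the passage to finite-index subgroups to interact cleanly with "infinite image'' is where care is required. The endgame (elementary $\Rightarrow$ finite) is then a soft argument using $b_1(\Gamma_g)=0$ and property (T) of the symplectic/orthogonal arithmetic group in genus $\geq 3$.
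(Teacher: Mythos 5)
Your overall architecture matches the paper's: factor $f$ through $\Gamma_g(p)$ using unipotent-freeness, exploit the K\"ahler structure via Eells--Sampson and Carlson--Toledo to force a factorization through $\Z$ or a surface group, and kill the image by finiteness of an abelianization. However, two steps in your middle paragraph are genuinely wrong or gapped.

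First, the claim that Proposition~\ref{lcseries} says ``equivalently $\mathcal T_g(p)$ is virtually nilpotent'' is false. That proposition says every $\gamma_k\mathcal T_g(p)$ has \emph{finite index}, i.e.\ all nilpotent quotients $\mathcal T_g(p)/\gamma_k\mathcal T_g(p)$ are finite. This is essentially the opposite of virtual nilpotence (an infinite simple group also has all $\gamma_k$ of finite index), and it gives you no amenability or polynomial-growth conclusion. So your mechanism for ruling out non-elementary images collapses. Second, you state the Carlson--Toledo rigidity only ``up to finite index'' and then argue that $\mathcal T_g(p)$ would \emph{virtually} surject onto $\Z$ or a surface group; to refute that you would need finiteness of $H_1$ of arbitrary finite-index subgroups of the Torelli quotient, which is precisely the open problem the paper flags right after the Delzant alternative (whether $\Gamma_g$ virtually surjects onto $\Z$). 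The paper sidesteps both issues: it applies Carlson--Toledo directly to the K\"ahler group $\mathcal I_{g,2}(p)$ itself (Proposition~\ref{kahler}), obtaining an honest (not virtual) factorization of $f|_{\mathcal I_{g,2}(p)}$ through $\Z$ or $\pi_1(\Sigma_h)$, and then uses Dimca--Papadima's finite generation of $H_1(\mathcal I_{g,2})$ to conclude $H_1(\mathcal I_{g,2}(p))$ is finite (it is generated by finitely many order-$p$ classes); since every nontrivial subgroup of $\Z$ or of a surface group has infinite abelianization, $f|_{\mathcal I_{g,2}(p)}$ is trivial, and finiteness of the index of $\mathcal I_{g,2}(p)$ (Proposition~\ref{lcseries}) plus torsion-freeness of $\Lambda$ finishes. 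Your closing appeal to property (T) of $\Aut^*(H_1(\Sigma_g;\Z/p\Z))$ is also superfluous, since that group is finite.
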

\begin{proof}
Since $\Lambda$ is cocompact, the homomorphism $f$ is unipotent-free. Therefore 
there exists some $p$ such that $f$ factors through $\Gamma_g(p)$.
Recall that $I_{g,2}(p)$ is K\"ahler and hence the fundamental group 
of some compact K\"ahler manifold $\mathcal X$.  
Then $f|_{I_{g,2}(p)}$ is 
induced by a map $F:\mathcal X\to \mathbb H^{n+1}_{\R}/\Lambda$ into a hyperbolic space form. 
Eells--Sampson \cite{ES} proved that then the map $F$ could be assumed to be 
a harmonic map. A result due to Carlson--Toledo (see  \cite{CT}, Thm. 7.1 and Cor. 3.7) shows that 
a harmonic map as above factors either through a circle or else through a compact Riemann surface. 
Thus $f|_{\mathcal I_{g,2}(p)}$ factors through $\Z$ or through $\pi_1(\Sigma_h)$. 

Now, Dimca and Papadima proved  in \cite{DP} that $H_1(\mathcal I_{g,2})$ is finitely generated and hence $H_1(\mathcal I_{g,2}(p))$ is finite, because it is generated by finitely many classes of 
Dehn twists. We derive that   $f|_{\mathcal I_{g,2}(p)}$ is trivial and hence 
$f$ is trivial. 
\end{proof}

\begin{question}
Are there any nontrivial (i.e. non virtually solvable image) homomorphisms $\Gamma_g\to SO(1,n)$, $n\geq 3$ and $g\geq 3$? 
\end{question}

Proposition \ref{rank1} cannot extend to genus 2, as homomorphisms of $\Gamma_2$ 
do not necessarily factor through $\Gamma_2(p)$. However, there exists a nontrivial 
homomorphism $\Gamma_2(5)\to PU(1,4)$ arising in the Fibonacci TQFT whose image is 
not virtually solvable, which will be explained later.

\section{Quantum representations} 
\subsection{Arithmetic groups and quantum representations}
We now consider the first examples of unitary (hence unipotent-free) representations of 
mapping class groups with infinite image (see \cite{F}). Although we  
use the generic term {\em quantum} representations for a specific family $\rho_p$ 
depending on an integer $p$, one should note that the algebraic machinery of modular tensor categories, in particular quantum groups, provide a large supply of such 
finite dimensional representations (see \cite{Turaev}). A comprehensive introduction to  quantum representations can be found in \cite{Marche}. 
    
Set $U=U(H)$  for the unitary group preserving a
Hermitian form $H$. We suppose that $H$ is associated to a non-degenerate sesquilinear form defined over a  totally real number field $\mathbb K$. Let $\O_K$ be the ring of algebraic 
integers in $\mathbb K$. 
The group of integral points $\widetilde{\Lambda}=SU(\O_{\mathbb K})$ is a lattice in the 
group $S\mathbb U= Res_{\mathbb K/\Q}SU$ obtained by the Weil restriction of scalars 
from $\mathbb K$ to $\Q$. Specifically $S\mathbb U=\prod_{\sigma} SU(f^\sigma)$, where  
$\sigma$ belongs to the set of real places of $\mathbb K$, i.e. embeddings $\sigma:\mathbb K\to \R$, up to conjugacy. 
Let $H_{g,p}$ be the Hermitian form on the space of conformal blocks $W_{g,p}$ in level $p$
and $U_{g,p}=U(H_{g,p})$. We drop $g$ and/or $p$ when irrelevant.

In the case of the SU(2)/SO(3) theory  we have a representation $\tilde{\rho}_p$ of a central extension $\widetilde{\Gamma_{g}}$ of the mapping class group by $\Z$ into the unitary group $U_{g,p}$, defined over a cyclotomic field $\mathbb K_p$. Note that for odd $p$, $\mathbb K_p$ is the totally real cyclotomic field 
$\Q(\zeta_p+\overline{\zeta_p})$, when $p\equiv 3({\rm mod} \; 4)$ and 
 $\Q(\zeta_{4p}+\overline{\zeta_{4p}})$, otherwise. 

Now, a key property of this construction is that the corresponding projective 
representation $\rho_p$ of $\Gamma_g$ factors through $\Gamma_g(p)$, for odd $p$ 
and through $\Gamma_g(2p)$, for  even $p$. This is not surprising in view of 
Proposition \ref{unipotentfree}, since these representations 
are finite-dimensional compact representations. 
 
A deep theorem of Gilmer-Masbaum (\cite{GM}) actually says that 
the image  $\widetilde{\mathcal L_p}=\widetilde{\rho}_{p}(\widetilde{\Gamma_g})\subset SU(\mathbb K_p)$ is integral when $p\equiv 3({\rm mod}\; 4)$, namely it satisfies: 
\[ \widetilde{\mathcal L_p}\subseteq \widetilde{\Lambda_p}=SU(\O_p).\]
Note that $\widetilde{\Lambda_p}=S\mathbb U(\Z)$ is a lattice within the linear algebraic group $S\mathbb U$ defined over $\Q$. There is a similar {\em projective} representation 
$\rho_p:\Gamma_g\to PU$, whose image is  
${\mathcal L_p}={\rho}_{p}({\Gamma_g})\subset \Lambda_p=PU(\mathbb \O_p)$.   

Knowing that the images  $\widetilde{\mathcal L_p}$ of the mapping class groups 
are (generically) infinite (see \cite{F}), Larsen and Wang proved that for prime $p\geq 5$ they are topologically dense in $SU$. Eventually the author showed in \cite{F13} that:  
\begin{proposition}\label{zariskidense}
The image $\widetilde{\mathcal L_p}$ is Zariski dense in 
$S\mathbb U$, if the level $p$ is prime, $p\geq 5$ and $g\geq 2$.  
\end{proposition}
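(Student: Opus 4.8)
The plan is to analyse the Zariski closure $\mathbf{G}$ of $\widetilde{\mathcal{L}_p}$ inside $S\mathbb{U}$ one simple factor at a time, and then to exclude diagonal coincidences between the Galois–conjugate factors. Under the identification $S\mathbb{U}(\Q)=SU(f)(\mathbb{K}_p)$ the subgroup $\widetilde{\mathcal{L}_p}$ consists of $\Q$–rational points, so $\mathbf{G}$ is an algebraic subgroup of $S\mathbb{U}$ defined over $\Q$; consequently $\mathrm{Gal}(\overline{\Q}/\Q)$ preserves $\mathbf{G}_{\overline{\Q}}\subseteq (S\mathbb{U})_{\overline{\Q}}=\prod_{\sigma}SU(f^{\sigma})_{\overline{\Q}}$ and permutes the factors through the transitive action of $\mathrm{Gal}(\mathbb{K}_p/\Q)$ on the real places $\sigma$ (the cyclotomic field $\mathbb{K}_p$ being abelian, hence Galois, over $\Q$). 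At the defining place $f^{\sigma}$ is positive definite, and by the density theorem of Larsen and Wang recalled above $\widetilde{\rho}_p(\widetilde{\Gamma_g})$ is topologically dense in the compact group $SU(f^{\sigma})(\R)$ for prime $p\geq 5$; since a topologically dense subgroup of a compact Lie group is Zariski dense in its complexification, the coordinate projection of $\mathbf{G}$ onto that factor is onto, and transporting by the Galois action the projection of $\mathbf{G}$ onto \emph{every} factor $SU(f^{\sigma})_{\overline{\Q}}\cong SL_{N,\overline{\Q}}$ (with $N=\dim_{\mathbb{K}_p}W_{g,p}$) is surjective. Because the factors are connected, the same holds for the identity component $\mathbf{H}=(\mathbf{G}_{\overline{\Q}})^{\circ}$.

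The next step is a Goursat–Ribet argument. A connected subgroup $\mathbf{H}$ of a product $\prod_{\sigma}S_{\sigma}$ of almost simple algebraic groups, all of whose coordinate projections are surjective, is necessarily a product of diagonal copies of $SL_N$ indexed by a partition of the set of places, each diagonal being the graph of an algebraic isomorphism between the corresponding factors; it coincides with $\prod_{\sigma}S_{\sigma}$ precisely when the partition is trivial. A block containing two distinct places $\sigma\neq\tau$ would produce an isomorphism $\varphi\colon S_{\sigma}\to S_{\tau}$ with $\pi_{\tau}|_{\mathbf H}=\varphi\circ\pi_{\sigma}|_{\mathbf H}$, hence, restricting to $\widetilde{\mathcal L_p}$, an equivalence $\widetilde{\rho}_p^{\,\tau}=\varphi\circ\widetilde{\rho}_p^{\,\sigma}$ of the Galois conjugates up to an automorphism of $SL_N$. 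Since for $N\geq 3$ every automorphism of $SL_N$ is inner or the composite of an inner one with $g\mapsto{}^{t}g^{-1}$ (and all are inner for $N=2$), this forces $\widetilde{\rho}_p^{\,\tau}$ to be isomorphic to $\widetilde{\rho}_p^{\,\sigma}$ or to its dual $(\widetilde{\rho}_p^{\,\sigma})^{*}$. Everything therefore reduces to the assertion that for $\sigma\neq\tau$ the Galois conjugates $\widetilde{\rho}_p^{\,\sigma}$ and $\widetilde{\rho}_p^{\,\tau}$ are neither isomorphic nor dual of one another.

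Finally I would settle this via the trace field. As $p$ is odd, $\mathbb{K}_p$ is totally real, so the defining embedding is real and unitarity of $\widetilde{\rho}_p$ gives $\mathrm{tr}\,\widetilde{\rho}_p(x^{-1})=\overline{\mathrm{tr}\,\widetilde{\rho}_p(x)}=\mathrm{tr}\,\widetilde{\rho}_p(x)$ as elements of $\mathbb{K}_p$; thus $\widetilde{\rho}_p$, and then each $\widetilde{\rho}_p^{\,\sigma}$, is self-dual, so the ``dual'' case collapses to the ``isomorphism'' case. Two Galois conjugates $\widetilde{\rho}_p^{\,\sigma}\cong\widetilde{\rho}_p^{\,\tau}$ have the same character, which means that $\sigma$ and $\tau$ agree on the trace field $\Q(\mathrm{tr}\,\widetilde{\rho}_p)\subseteq\mathbb{K}_p$; it therefore suffices to prove that this field is all of $\mathbb{K}_p$. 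This is where the explicit structure of the $SU(2)/SO(3)$ theory enters: one exhibits a simple closed curve $\gamma$ on $\Sigma_g$ for which the eigenvalues of $\widetilde{\rho}_p(T_{\gamma})$ are, up to the global scalar coming from the central extension $\widetilde{\Gamma_g}$, the twist coefficients $\theta_c$ attached to the admissible colours, and one checks that the genuine conjugacy invariants of $\widetilde{\rho}_p(T_{\gamma})$ (its characteristic polynomial, or ratios of eigenvalues, which are unaffected by the scalar ambiguity) already generate $\mathbb{K}_p$ over $\Q$ — these being Gauss-sum type numbers that span the real cyclotomic field; alternatively one invokes the irreducibility of the $SO(3)_p$ representation at prime level, which identifies the trace field with the field of definition $\mathbb{K}_p$. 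Then $\sigma\neq\tau$ cannot agree on $\mathbb{K}_p$, every block is a singleton, $\mathbf{H}=\prod_{\sigma}SU(f^{\sigma})_{\overline{\Q}}$, and therefore $\mathbf{G}=S\mathbb{U}$, which is the claim.

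The hard part is this last step — proving that the character field of $\widetilde{\rho}_p$ is exactly $\mathbb{K}_p$, equivalently that no nontrivial element of $\mathrm{Gal}(\mathbb{K}_p/\Q)$ fixes the isomorphism class of $\widetilde{\rho}_p$. The delicate point is bookkeeping inside the TQFT: eigenvalues of Dehn twists are only defined up to the framing anomaly, so one must work throughout with genuine conjugacy invariants, and one has to track precisely which roots of unity the twist coefficients and $S$-matrix entries are and how passage to the \emph{projective} representation affects the field of definition. By contrast the earlier steps are soft: reductivity of $\mathbf{G}$ need not be assumed a priori, since semisimplicity of $\mathbf{H}$ drops out of the Goursat decomposition, and the passage from $\mathbf{G}$ to $\mathbf{G}^{\circ}$ is harmless because the target factors are connected.
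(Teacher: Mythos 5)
The paper itself offers no proof of this proposition: it is quoted from \cite{F13}. Measured against the argument there, your skeleton is the right one and matches it closely: pass to the Zariski closure $\mathbf{G}$ over $\Q$, use Larsen--Wang density at the defining place to make one coordinate projection surjective, transport surjectivity to all factors by the transitive action of $\mathrm{Gal}(\mathbb{K}_p/\Q)$ on the places, and then run a Goursat--Ribet argument to reduce the whole statement to the assertion that distinct Galois conjugates of $\widetilde{\rho}_p$ are not equivalent up to an automorphism of $SL_N$. All of these ``soft'' steps are correct as you state them.

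The gap is exactly where you flag it, and it is a genuine gap rather than a deferred routine check. First, your reduction of the dual case to the isomorphism case is unjustified: the matrix entries and traces of $\widetilde{\rho}_p$ (Reshetikhin--Turaev invariants of mapping tori, Gauss-sum--like quantities for Dehn twists) live in the CM field $\Q(\zeta_p)$ (resp.\ $\Q(\zeta_{4p})$), of which $\mathbb{K}_p$ is only the maximal real subfield, so the asserted reality $\overline{\mathrm{tr}\,\widetilde{\rho}_p(x)}=\mathrm{tr}\,\widetilde{\rho}_p(x)$ is not established and $\widetilde{\rho}_p$ need not be self-dual; indeed self-duality is \emph{equivalent} to the character field being totally real, i.e.\ to a piece of the very statement you are trying to prove. (The dual case is still harmless, but for a different reason: unitarity gives $\widetilde{\rho}_p^{\,*}\cong\overline{\widetilde{\rho}_p}$, the conjugate under the generator of $\mathrm{Gal}(\Q(\zeta_p)/\mathbb{K}_p)$, which fixes every real place of $\mathbb{K}_p$, so both the isomorphism and the dual cases reduce to two embeddings of $\Q(\zeta_p)$ lying over \emph{distinct} real places of $\mathbb{K}_p$ and agreeing on the character field.) Second, and decisively, the claim that the conjugation-invariant data of $\widetilde{\rho}_p$ generate a field containing $\mathbb{K}_p$ is only asserted. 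Your fallback --- that irreducibility identifies the trace field with the field of definition --- is not a valid principle: an irreducible representation written over a field $E$ can have character field a proper subfield of $E$. The correct route is the first one you sketch: for a Dehn twist along a nonseparating curve the eigenvalues are the twist coefficients $(-1)^{j}A^{j(j+2)}$ with multiplicities the Verlinde dimensions of the cut surface, and one must actually verify, using the primality of $p$, that these multiplicities separate the eigenvalues and that the resulting ratios of roots of unity are not fixed by any nontrivial element of $\mathrm{Gal}(\mathbb{K}_p/\Q)$. That computation is the substance of the proof in \cite{F13}; without it your argument does not close.
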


There are several immediate questions which one could ask concerning the 
structure of the group $\mathcal L_p$ and whose answers might shed light on the structure 
of mapping class groups. The following seem to be unknown.    

\begin{question}[Arithmeticity]\label{arithmetic}
Is the group $\mathcal L_p$ arithmetic, namely of 
finite index in the higher rank lattice $\Lambda_p=PU_{g,p}(\O_p)$? 
\end{question}

\begin{question}[Local rigidity]\label{locallyrigid}
Is it true that  quantum representations are locally rigid within $U_{g,p}$ for prime $p\geq 5$, $g\geq 2$? What about their rigidity in $U(N)\supset U_{g,p}$ or $GL(N,\C)$? 
\end{question}

\begin{question}[Injectivity]\label{abstractimage}
If $g\geq 2, p\geq 5$ is prime, then is $\mathcal L_p$ isomorphic to $\Gamma_g(p)$? 
\end{question}

\noindent It is proved in \cite{FuPi1} that a positive answer to Question \ref{abstractimage} implies a negative  answer to Question \ref{arithmetic}. The arithmeticity of various monodromy groups was already intensively  
studied in the literature, starting with the non-arithmetic examples of Nori (\cite{Nori}), the study of thin groups in \cite{Sarnak}, etc.  Venkataramana (see \cite{Venka}) solved the analog of conjecture \ref{arithmetic} in the case of the Burau representation of braid groups $B_{n+1}=\Gamma_{0,1}^{n+1}$ at roots of unity of small order $d\leq \frac{n}{2}$, namely precisely the case where there are unipotents. The unipotent-free case is yet unsolved (see also \cite{Mcmullen}) 
even for the Burau representation.  
Moreover, another construction of mapping class group representations, which will be explained later in this article, was shown to provide finite-dimensional representations with 
arithmetic images in \cite{GLLM}. In this case also the existence of (many) unipotent elements was 
a key ingredient in the proof of arithmeticity.

Note that $\mathcal L_g=\rho_p(\Gamma_g)$ is  also a linear group. Indeed  $\mathcal L_g$ is 
the quotient of the linear group 
$\widetilde{\rho_p}(\widetilde{\Gamma_g})$ by the central finite subgroup 
$\widetilde{\rho_p}(Z(\widetilde{\Gamma_g}))$ where $Z(\widetilde{\Gamma_g})$ denotes 
the center of $\widetilde{\Gamma_g}$. 
Thus $\mathcal L_p$ is residually finite 
and hence Hopfian. 
Therefore the conjecture above is equivalent to the fact that 
$\ker \rho_p= \Gamma_g[p]$.

Recall that Bridson proved that $\Gamma_g$ representations into $GL(g,\C)$ are 
rigid because $\Gamma_g$ has property $FA_g$. 
This question is also related to whether $\Gamma_h$ has  the property $(T,\mathbf F)$, 
as introduced by Lubotzky and Zimmer, where $\mathbf F$ is the family of all finite-dimensional representations. A group $\Gamma$ has  property  $(T,\mathbf F)$ if the trivial representation is isolated in the set of unitary finite-dimensional 
representations. It is known that $SL(2,\Z\left[\frac{1}{p}\right])$ 
has property $(T,\mathbf F)$ but not property $T$, because the congruence subgroup 
conjecture holds true for this group.

We have the following more general strong rigidity question, which is unlikely to have a 
positive answer: 
 
\begin{question}[Strong rigidity]
Any homomorphism $f:\Gamma_g(p)\to G$ to a semi-simple Lie group 
$G$ of Hermitian type factors through  a homomorphism $S\mathbb U_{g,p} \to G$? 
\end{question}

By Simpson's results (see \cite{Sim}) the validity of the local rigidity conjecture 
implies that quantum representations arise as 
factors of variations of Hodge structures (VHS) over $\Q$. 
Note that it is completely unknown whether a similar result holds for $\Gamma_g$, namely: 

\begin{question}
Let $\Gamma_g\to GL(n,\C)$ be a locally rigid representation for $g\geq 3$. 
Then is the associated $\Gamma_g$-invariant 
flat bundle over the Teichm\"uller space $T_g$ 
 the vector bundle associated to a VHS? 
\end{question}

\subsection{Finite quotients through quantum representations}
If $G$ is a group we denote by $\widehat{G}$ its profinite completion. 
It is known that $P\mathbb U_p(\Z)$ has the congruence subgroup property CSP (see \cite{F13}), so that the profinite completion $\widehat{\Lambda_p}$ is isomorphic to $P\mathbb U(\widehat{\Z})$.
Now, the Strong Approximation theorem due to Nori (\cite{Nori}) and Weisfeiler (\cite{Weis})  can be used to obtain 
information about the profinite completion of $\widehat{\mathcal L_p}$ and hence 
$\Gamma_g(p)$. 
First, let us recall the statement due to Nori for 
algebraic groups defined over $\Q$: 
\begin{theorem}[\cite{No}, Thm.5.4]\label{Nori}
Let $G$ be a connected  linear algebraic group $G$ defined over $\Q$  
and $\Lambda\subset G(\Z)$ be a Zariski dense subgroup. 
Assume that $G(\C)$ is simply connected. Then the 
completion of $\Lambda$ with respect to the congruence  
topology induced from $G(\Z)$ is an open subgroup in 
the group $G(\widehat{\Z})$ of points of $G$ 
over the pro-finite completion $\widehat{\Z}$ of $\Z$.  
\end{theorem}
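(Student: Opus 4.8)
The plan is to run a local-to-global argument: reduce first to the case of a semisimple simply connected group, then control the reduction of $\Lambda$ modulo each prime, and finally glue this local information together. For the reductions: since finitely many elements of $\Lambda$ already span a Zariski dense subgroup $\Lambda_0$, and the congruence closure of $\Lambda$ contains that of $\Lambda_0$, one may assume $\Lambda$ finitely generated. Writing $G=R_u(G)\rtimes H$ with $H$ a Levi subgroup, the hypothesis that $G(\C)$ be simply connected, together with the contractibility of $R_u(G)(\C)$, forces $H(\C)$ simply connected, hence $H$ semisimple and simply connected; strong approximation for the successive $\mathbb{G}_a$-layers of $R_u(G)$ is standard, and an extension argument reduces us to the case where $G$ is semisimple and simply connected. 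I would also record the decomposition $G=\prod_i G^{(i)}$ into $\Q$-simple factors, each written as $G^{(i)}=\mathrm{Res}_{F_i/\Q}\,H_i$ for an absolutely simple group $H_i$ over a number field $F_i$. Fixing an integral model, $G(\widehat{\Z})=\prod_p G(\Z_p)$, and it suffices to prove that the closure $\overline{\Lambda}$ of $\Lambda$ in this product is open.

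The heart of the matter is the behaviour at almost all primes. Fix an embedding $G\hookrightarrow GL_N$ over $\Z[1/M]$. Here I would invoke Nori's structure theorem for subgroups of $GL_N(\mathbb{F}_p)$: there is a constant $J(N)$ such that for every prime $p>J(N)$ and every subgroup $\Delta\le GL_N(\mathbb{F}_p)$, the subgroup $\Delta^{+}$ generated by the elements of order $p$ in $\Delta$ has the form $\mathbf{H}(\mathbb{F}_p)^{+}$ for a unique connected $\mathbb{F}_p$-subgroup $\mathbf{H}\le GL_N$ generated by one-parameter unipotent subgroups; moreover this assignment is compatible with reduction modulo $p$, so that when $\Delta$ is the reduction of $\Lambda$ the attached $\mathbf{H}$ is the reduction of the subgroup $\widetilde{G}\subseteq G$ generated by the unipotent one-parameter subgroups of $G$. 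Since $G$ is semisimple, $\widetilde{G}=G$, whence $\mathbf{H}=G_{\mathbb{F}_p}$ and $(\Lambda\bmod p)^{+}=G(\mathbb{F}_p)^{+}$ for all $p$ outside a finite set $S$; and since $G$ is simply connected, the finite group of Lie type $G(\mathbb{F}_p)$ is generated by its elements of order $p$ once $p$ is large, so that $\Lambda$ surjects onto $G(\mathbb{F}_p)$ for $p\notin S$. I expect this to be by far the main obstacle: establishing Nori's theorem, with the uniform constant $J(N)$ and the characteristic-zero compatibility, rests on delicate input (validity of the exponential map on nilpotent elements for $p>J(N)$, Chevalley's theorem that a semisimple group is generated by its unipotents, the classification-free structure theory of subgroups of $GL_N(\mathbb{F}_p)$) that cannot be circumvented.

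For the finitely many exceptional primes $p\in S$, let $\overline{\Lambda}_p$ be the closure of the image of $\Lambda$ in $G(\Z_p)$; it is a compact $p$-adic analytic group, and its Lie algebra $\mathfrak{h}\subseteq\mathfrak{g}\otimes\Q_p$ is stable under $\mathrm{Ad}(\Lambda)$, hence, by Zariski density of $\Lambda$ in $G$, stable under $\mathrm{Ad}(G)$, i.e., $\mathfrak{h}$ is an ideal of $\mathfrak{g}\otimes\Q_p$. It is nonzero, since $\Lambda$ is infinite; and it is not a proper ideal: through the identification $G^{(i)}=\mathrm{Res}_{F_i/\Q}\,H_i$ a proper ideal would make the projection of $\Lambda$ to some local factor $H_i(F_{i,\mathfrak{p}})$ finite, which is impossible since the projection of $\Lambda$ to $H_i(F_i)$ is infinite and Zariski dense while $H_i(F_i)\hookrightarrow H_i(F_{i,\mathfrak{p}})$ is injective. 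Hence $\mathfrak{h}=\mathfrak{g}\otimes\Q_p$, so $\overline{\Lambda}_p$ is open in $G(\Q_p)$ and therefore open in $G(\Z_p)$ for each $p\in S$ as well.

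It remains to glue. At this stage the closed subgroup $\overline{\Lambda}\subseteq\prod_p G(\Z_p)$ projects onto an open subgroup of each factor, in fact onto all of $G(\Z_p)$ for $p\notin S$. Enlarging $S$ if necessary so that for $p\notin S$ the group $G(\Z_p)$ is topologically perfect and all its finite simple quotients are of Lie type in characteristic $p$; since for distinct primes these sets of simple quotients are disjoint, a Goursat-type argument forces $\overline{\Lambda}$ to contain $\prod_{p\notin S}G(\Z_p)$ times an open subgroup of the finite product $\prod_{p\in S}G(\Z_p)$. In particular $\overline{\Lambda}$ is open in $G(\widehat{\Z})$, as required.
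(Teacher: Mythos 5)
First, a point of orientation: the paper does not prove this statement at all. It is Nori's strong approximation theorem, quoted with the citation to \cite{No} and used as a black box, so there is no proof in the paper to compare yours against; what you have written is in effect a pr\'ecis of the argument in the cited reference (and of Weisfeiler's parallel work). As a pr\'ecis its architecture is accurate: reduce to $\Lambda$ finitely generated and to $G$ semisimple simply connected; control the reductions modulo almost all primes via the structure theory of subgroups of $GL_N(\mathbb{F}_p)$ and Chevalley's generation of a semisimple group by its unipotent one-parameter subgroups; treat the finitely many bad primes by the $p$-adic Lie argument ($\mathrm{Ad}(\Lambda)$-invariance of the Lie algebra of $\overline{\Lambda}_p$ plus Zariski density forces it to be all of $\mathfrak{g}\otimes\Q_p$); and glue with a Goursat/disjoint-composition-factor argument. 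You are also right that the $\mathbb{F}_p$-structure theorem, with its uniform constant $J(N)$ and its compatibility with reduction of the characteristic-zero Zariski closure, is the irreducible core; but citing it is essentially citing the theorem being proved, so the proposal should be read as an outline rather than a proof.

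Two steps are genuinely incomplete even as an outline. (i) You establish that $\Lambda$ surjects onto $G(\mathbb{F}_p)$ for $p\notin S$, but in the gluing paragraph you use that $\overline{\Lambda}$ surjects onto all of $G(\Z_p)$ for such $p$. Passing from surjectivity mod $p$ to density in $G(\Z_p)$ is not automatic: one needs a lifting/Frattini argument on the congruence filtration, using that $K_1=\ker\bigl(G(\Z_p)\to G(\mathbb{F}_p)\bigr)$ is pro-$p$ with $K_i/K_{i+1}\cong\mathfrak{g}\otimes\mathbb{F}_p$ and that a closed subgroup surjecting onto $G(\mathbb{F}_p)$ cannot be a complement to $K_1$ for large $p$. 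This occupies a nontrivial part of Nori's paper, and it is also what underlies your unproved assertion that $G(\Z_p)$ is topologically perfect. (ii) The reduction to the semisimple case is more delicate than ``strong approximation for the $\mathbb{G}_a$-layers plus an extension argument'': Zariski density of $\Lambda$ in $G$ does not imply Zariski density of $\Lambda\cap R_u(G)$ in $R_u(G)$ (indeed this intersection may well be trivial), so one must argue on the extension $1\to R_u(G)\to G\to H\to 1$ using the $H$-module structure of the layers of $R_u(G)$ and the already-established openness for $H$. Neither point is a wrong turn --- both are handled in the reference --- but as a self-contained argument the proposal has these two holes in addition to the deliberately black-boxed structure theorem.
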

Then, the Zariski density theorem \ref{zariskidense} along  with 
the Strong Approximation Theorem of Nori-Weisfeiler above imply that the image 
of the homomorphism  
\[\hat i: \widehat{\mathcal L_p}\to P\mathbb U_p(\widehat{\Z})\] 
is an open subgroup, where $\hat i$ denotes the map induced by inclusion 
$i:\mathcal L_p\to P\mathbb U_p(\Z)$ at the level of profinite completions.  

Note that the composition 
\[ \Gamma_g(p)\stackrel{\rho_p}{\to}\mathcal L_p \stackrel{i}{\to} \Lambda_p=P\mathbb U_p(\Z)\]
cannot be an isomorphism onto a finite index subgroup of $\Lambda_p$, as $\Gamma_g(p)$ is not a higher rank lattice (see \cite{FuPi1}). 
If $\mathcal L_p$ were a higher rank lattice then it should be an arithmetic subgroup of 
$P\mathbb U_p(\Z)$ and hence of finite index. 
Moreover, in \cite{AF} it was proved that ${\Gamma_g^1}/{\Gamma_g^1[p]}$ has an infinite series 
of normal subgroups of infinite index in each other. 
This shows some evidence that ${\Gamma_g(p)}$ has many more finite quotients than 
the lattice $\Lambda_p=P\mathbb U_p(\widehat{\Z})$. 

\begin{question}[Arithmetic congruence kernel]
Does the homomorphism induced at profinite completions
\[ \widehat{\Gamma_g(p)}\stackrel{\widehat{\rho_p}}{\to}\widehat{\mathcal L_p} \stackrel{\widehat{i}}{\to}\widehat{\Lambda_p}= P\mathbb U_p(\widehat{\Z})\]
have infinite kernel? Is $\hat{i}$ injective? 
\end{question}
If $\hat{i}$ were not injective, then Question \ref{arithmetic} would have a positive answer. 

\begin{remark}
If $\hat{i}$ were an isomorphism and $\mathcal L_p$ nonarithmetic, then 
the inclusion  $i:\widehat{\mathcal L}_p\to P\mathbb U_p(\widehat{\Z})$ 
would provide  additional counter-examples to a conjecture of Grothendieck. 
If $\mathcal L_p$ were locally rigid  and nonarithmetic then one would expect it 
to be super-rigid. 
If $\mathcal L_p$ were not  locally rigid then it would not have  Kazhdan 
property T since the finite-dimensional unitary representations of groups with property T 
are locally rigid (see \cite{Rapinchuk}). Thus $\Gamma_g(p)$ would not have property T. 
\end{remark}

This gives rise to a new profinite completion $\overline{\Gamma}_g$  
of $\Gamma_g$, we will call the {\em q-congruence} completion.
A principal q-congruence subgroup is the preimage of an open subgroup of  
$\prod_{{\rm prime}\; p} \widehat{\Lambda}_p$. These are precisely the 
intersection of kernels of finitely many  epimorphisms onto $P\mathbb U_{p_i}(\Z/q_i\Z)$. 
A finite-index subgroup of $\Gamma_g$ is a q-congruence subgroup if it contains a 
principal q-congruence subgroup. 

As a consequence of the asymptotic faithfulness of the quantum representations
by Andersen (\cite{A1}), and Freedman--Walker--Wang (\cite{FWW}), we derive 
immediately:
\begin{proposition}
The q-congruence topology is separated, namely $\Gamma_g\to \overline{\Gamma_g}$ is injective.  
\end{proposition}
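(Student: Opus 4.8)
The plan is to unwind the definition of the $q$-congruence completion and then to reduce the assertion to the asymptotic faithfulness of the family $(\rho_p)$. First, the kernel of $\Gamma_g\to\overline{\Gamma_g}$ is the intersection of all $q$-congruence subgroups of $\Gamma_g$, and since each such subgroup contains a principal one, this kernel equals the intersection of all \emph{principal} $q$-congruence subgroups. By the congruence subgroup property of $P\mathbb{U}_p(\Z)$ recalled just above the statement, the open subgroups of $\widehat{\Lambda_p}=P\mathbb{U}_p(\widehat{\Z})$ are exactly the finite over-groups of the closures of the principal congruence subgroups $\ker\bigl(P\mathbb{U}_p(\Z)\to P\mathbb{U}_p(\Z/q\Z)\bigr)$; pulling these back along $\Gamma_g\stackrel{\rho_p}{\to}\mathcal{L}_p\hookrightarrow\Lambda_p\to\widehat{\Lambda_p}$ identifies every principal $q$-congruence subgroup as a finite intersection of subgroups of the form
\[ N_{p,q}=\ker\Bigl(\Gamma_g\stackrel{\rho_p}{\longrightarrow}\mathcal{L}_p\hookrightarrow\Lambda_p=P\mathbb{U}_p(\O_p)\longrightarrow P\mathbb{U}_p(\Z/q\Z)\Bigr), \]
and conversely each $N_{p,q}$ is itself the preimage of an open subgroup of the factor $\widehat{\Lambda_p}$ of $\prod_p\widehat{\Lambda_p}$, hence a principal $q$-congruence subgroup. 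So it suffices to prove $\bigcap_{p,q}N_{p,q}=\{1\}$, where I may let $p$ run through the primes $p\geq 5$ with $p\equiv 3\pmod{4}$ --- for which $\mathcal{L}_p\subseteq P\mathbb{U}_p(\O_p)$ by Gilmer--Masbaum integrality --- and $q$ through all positive integers.

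For this reduced statement the argument is short. Take $1\neq x\in\Gamma_g$ with $g\geq 3$. By the asymptotic faithfulness of the quantum $SU(2)/SO(3)$ representations (Andersen \cite{A1}; Freedman--Walker--Wang \cite{FWW}) one has $\bigcap_p\ker\rho_p=\{1\}$, and this already holds with $p$ restricted to any fixed infinite set of levels; hence there is a prime $p\geq 5$, $p\equiv 3\pmod{4}$, with $\rho_p(x)\neq 1$ in $\mathcal{L}_p\subseteq\Lambda_p=P\mathbb{U}_p(\O_p)$. Since $\Lambda_p$ is an arithmetic group it is residually finite, and its congruence quotients $P\mathbb{U}_p(\Z/q\Z)$ separate its points --- by the congruence subgroup property, or directly because $\bigcap_q q\O_p=0$ --- so some $q$ satisfies $\rho_p(x)\notin\ker\bigl(\Lambda_p\to P\mathbb{U}_p(\Z/q\Z)\bigr)$, i.e.\ $x\notin N_{p,q}$. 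Therefore $x\notin\bigcap_{p,q}N_{p,q}$, so $x$ has nontrivial image in $\overline{\Gamma_g}$, and $\Gamma_g\to\overline{\Gamma_g}$ is injective.

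I do not anticipate any real obstacle: the whole mathematical content is carried by the cited asymptotic faithfulness theorems, which is why the paper records the conclusion as immediate. The two points that need a moment's care --- rather than effort --- are, first, that $\rho_p$ is only a \emph{projective} representation, so that $\bigcap_p\ker\rho_p$ is the centre of $\Gamma_g$ and not $\{1\}$; this is why one must take $g\geq 3$, since in genus $2$ the hyperelliptic involution survives in $\bigcap_{p,q}N_{p,q}$ (it is killed by every $\rho_p$) and the statement then holds only modulo the centre. Second, one must verify the bookkeeping of the first paragraph, namely that the subgroups $N_{p,q}$ are genuinely principal $q$-congruence subgroups, which is the only place where the congruence subgroup property of $P\mathbb{U}_p(\Z)$ is invoked; by contrast strong approximation (Nori--Weisfeiler) is not needed here, serving only the finer statements about the profinite images $\widehat{\mathcal{L}_p}$.
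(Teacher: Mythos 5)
Your argument is correct and is exactly the route the paper intends: the paper derives the proposition ``immediately'' from the asymptotic faithfulness of Andersen and Freedman--Walker--Wang, and your proof simply fills in the bookkeeping (the reduction to $\bigcap_{p,q}N_{p,q}=\{1\}$ and the use of residual finiteness of the arithmetic groups $\Lambda_p$ to pass from $\rho_p(x)\neq 1$ to a finite congruence quotient detecting $x$). Your caveat about genus $2$ --- where the hyperelliptic involution lies in $\ker\rho_p$ for every $p$, so that injectivity can only hold modulo the centre and one must take $g\geq 3$ --- is a genuine point that the paper's unqualified statement glosses over.
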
 

This q-congruence topology can be further refined by allowing nonprime $p$. 
However, one should note that Proposition \ref{zariskidense} is not anymore true, when 
$p$ is not prime. Much more the representation $\rho_p$ might even be reducible 
in this situation. In this case we need to consider an intermediary group 
which is the Zariski closure $\mathbb L_p$ of $\mathcal L_p$. Then 
$\mathbb L_p$ is a linear algebraic subgroup of $S\mathbb U_p$ defined over $\Q$. 
Moreover, the analog of the Gilmer-Masbaum integrality theorem might not hold when $p$ is not prime. In this case $\widetilde{\Lambda_{g,p}}$ is a lattice in a product of $p$-adic groups.

The above construction  is based on a single explicit family of quantum representations $\rho_p$ indexed by the level $p$. The finite quotients 
constructed this way seem already form a meaningful and rich enough family (see 
\cite{F13,MR}). However, this is just the simplest possible TQFT, commonly associated with the 
SU(2)/SO(3) gauge groups. Stepping to arbitrary simple Lie groups, like the $SU(n)$ family 
might add further finite quotients. However, many of the technical results used above, like the 
Zariski density theorem \ref{zariskidense}  are yet to be developed in order to obtain clean 
statements.

Eventually we should note that the results obtained above for $\Gamma_g$
could in principle be extended to $\Gamma_{g,k}^r$. The case   
of $\Gamma_g^1$ was first considered by Koberda--Santharoubane (\cite{KS}) and further in \cite{FL,EF}. In particular we have linear algebraic groups 
$\mathbb U_{g,p}^1$, playing the role of $\mathbb U_p$ and projective representations 
$\rho_p:\Gamma_g^1\to P\mathbb U_{g,p}^1$. Notice that the group and 
the representation actually depend on the choice of a nonzero color of the marked point, which was 
chosen to be $p-3$ in \cite{FL}.

Consider the Birman exact sequence for $g\geq 2$: 
\[ 1\to \pi_g\to \Gamma_g^1\to \Gamma_g\to 1.\]
The projective representations $\rho_p^1:\Gamma_g^1\to P\mathbb U_{g,p}^1$ 
factors through $\Gamma_g^1(p)$. It is proved in \cite{AF} that there is an analog 
of the Birman exact sequence: 
\[ 1\to \pi_g(p)\to \Gamma_g^1(p)\to \Gamma_g(p)\to 1\]
where $\pi_g(p)=\pi_g/\pi_g[p]$ is the quotient by the normal subgroup 
generated by the $p$-th powers of  classes of the {\em simple} loops on the surface.  
It appears that $\rho_p^1$ descends to a homomorphism $\rho_p^1:\pi_g(p)\to P\mathbb U_{g,p}^1$, 
whose image will be denoted by $\Pi_{g,p}\subset \Lambda_p^1=  P\mathbb U_{g,p}^1(\Z)$. 
The corresponding objects with $\tilde{}$ will be corresponding 
lifts to  $S\mathbb U_{g,p}^1$.

Then the Zarisky density result in proposition \ref{zariskidense} was  extended  in \cite{FL} to 
the punctured case, as follows: 

\begin{proposition}\label{surfzariskidensity}
The image $\rho_p^1(\widetilde{\Pi}_{g,p})$ is Zariski dense in 
$S\mathbb U_{g,p}^1$, if the level $p$ is prime, $p\geq 5$ and $g\geq 2$.  
\end{proposition}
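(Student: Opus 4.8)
The plan is to reduce the Zariski density of $\rho_p^1(\widetilde{\Pi}_{g,p})$ inside $S\mathbb U_{g,p}^1 = \prod_\sigma SU(H_{g,p}^{1,\sigma})$ to two inputs: the already-established Zariski density of the closed-surface image (Proposition \ref{zariskidense}), and an analysis of how $\pi_g(p)$ sits inside $\Gamma_g^1(p)$ via the Birman exact sequence. First I would fix the product decomposition of $S\mathbb U_{g,p}^1$ over the real places $\sigma$ of $\mathbb K_p$ and note, by Goursat's lemma, that a subgroup $H \le \prod_\sigma SU(H_{g,p}^{1,\sigma})$ is Zariski dense provided (a) each projection $\mathrm{pr}_\sigma(H)$ is Zariski dense in the corresponding factor $SU(H_{g,p}^{1,\sigma})$, and (b) no two factors become "correlated", i.e. for $\sigma \ne \tau$ the image of $H$ in $SU(H_{g,p}^{1,\sigma}) \times SU(H_{g,p}^{1,\tau})$ is not the graph of an isomorphism of the two $SU$-factors. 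Point (b) is handled by a dimension/Galois argument: the factors $W_{g,p}^{1,\sigma}$ generically have distinct dimensions, or when they agree the relevant $SU$ groups are not $\C$-isomorphic in a Galois-equivariant way, exactly as in the closed case treated in \cite{F13}; I would simply transport that argument across, using that the color $p-3$ of the marked point is fixed.

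The core step is (a): showing that the single-factor image $\mathrm{pr}_\sigma \rho_p^1(\widetilde{\Pi}_{g,p})$ is Zariski dense in $SU(H_{g,p}^{1,\sigma})$, equivalently that the connected Zariski closure $\mathbb G_\sigma$ is all of $SU$. Here I would exploit that $\widetilde{\Pi}_{g,p}$ is a normal subgroup of $\widetilde{\Gamma_g^1}(p)$ (it is the kernel of the projection to $\Gamma_g(p)$ up to the central extension), so $\mathbb G_\sigma$ is normalized by the Zariski closure of the full $\Gamma_g^1$-image. If that full image is Zariski dense in $SU(H_{g,p}^{1,\sigma})$ — which follows from Proposition \ref{zariskidense} applied to $\Gamma_g \subset \Gamma_g^1/\pi_g$ together with the fact that $\rho_p^1$ restricted to a genus-$g$ subsurface contains a conjugate of $\rho_p$ — then $\mathbb G_\sigma$ is a connected normal subgroup of the almost-simple group $SU$, hence either trivial or everything. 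To exclude triviality I would produce an explicit non-central element in the image of $\pi_g(p)$: take a nonseparating simple loop $\gamma$ based at the marked point; its image under $\rho_p^1$ is computed in the skein-theoretic model of \cite{FL} and acts nontrivially (indeed with infinite order, or at least non-centrally) on $W_{g,p}^1$ for $p \ge 5$ prime — this is precisely the content of the asymptotic-faithfulness-type computations available for the punctured theory. Hence $\mathbb G_\sigma = SU$.

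The main obstacle I expect is the last point: pinning down that the image of $\pi_g(p)$ is genuinely noncommutative and non-central in each Galois factor, rather than landing in a torus or in the center. For the closed theory the analogous fact uses the braiding eigenvalue structure and the irreducibility of $\rho_p$ on conformal blocks; in the punctured case one must check the representation $\rho_p^1$ of $\Gamma_g^1$ is irreducible (or understand its isotypic decomposition) and that the subgroup $\pi_g(p)$, being normal, cannot be absorbed into a proper reductive subgroup. A clean way to do this is to invoke the result of \cite{F} / Larsen--Wang that the closed image is infinite and combine it with the surjection $\Gamma_g^1(p) \to \Gamma_g(p)$: if $\rho_p^1(\pi_g(p))$ were central then $\rho_p^1$ would factor projectively through $\Gamma_g(p)$, contradicting that $\mathbb U_{g,p}^1$ is strictly larger than $\mathbb U_{g,p}$ and carries genuinely new representation-theoretic content (the marked point is colored nontrivially). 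Once this non-centrality is in hand, the normality argument closes the proof; the remaining verifications — the Goursat decorrelation and the transport of the dimension computation from \cite{F13} — are routine given the explicit description of the conformal block spaces $W_{g,p}^1$ in \cite{FL}.
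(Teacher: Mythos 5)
The paper gives no proof of this proposition; it is quoted directly from \cite{FL}. Your strategy --- use the Birman exact sequence to see that $\widetilde{\Pi}_{g,p}$ is the image of a normal subgroup of $\widetilde{\Gamma_g^1}$, so that its Zariski closure is a normal algebraic subgroup of $S\mathbb U_{g,p}^1$ once the full image is known to be dense, hence a sub-product of the almost-simple factors $SU(H_{g,p}^{1,\sigma})$, and then rule out the central case by exhibiting a non-central element and using Galois equivariance to propagate non-centrality to every place --- is exactly the mechanism used in \cite{FL}, so the architecture is right. Note also that once you have normality inside the full product, the Goursat decorrelation step for $\Pi_{g,p}$ itself is automatic (a connected normal subgroup of a product of almost-simple groups is a sub-product, never a graph); decorrelation is only genuinely needed for the prior statement that $\rho_p^1(\widetilde{\Gamma_g^1})$ is dense in the whole product.

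Two points need repair. First, your claim that a nonseparating simple based loop $\gamma$ has image of infinite order is false: point-pushing along $\gamma$ gives $T_{\gamma_1}T_{\gamma_2}^{-1}$, where $\gamma_1,\gamma_2$ are the two \emph{disjoint} boundary curves of an annular neighbourhood of $\gamma$ containing the marked point; these commuting twists have order dividing $p$ in $\Gamma_g^1(p)$, so $\rho_p^1(\gamma)$ has finite order. What saves the argument is the weaker assertion you also make: this element is non-central, since in a pants decomposition containing $\gamma_1,\gamma_2$ it is diagonal with entries $\mu_{c_1}\mu_{c_2}^{-1}$ over the admissible colourings $(c_1,c_2,p-3)$, and these ratios are not all equal; a single non-central element in a normal subgroup whose normalizer has almost-simple Zariski closure already forces density. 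The Koberda--Santharoubane computation with two loops intersecting twice (which is what \cite{FL}, and the handlebody section of this paper, actually invoke) yields infiniteness of the image, which is more than you strictly need. Second, the density of $\rho_p^1(\widetilde{\Gamma_g^1})$ in $S\mathbb U_{g,p}^1$ does not follow from Proposition \ref{zariskidense} by ``restricting to a genus-$g$ subsurface'': $\Gamma_g$ is not a subgroup of $\Gamma_g^1$ and $W_{g,p}^1\neq W_{g,p}$, so this input must be established by rerunning the Larsen--Wang two-eigenvalue argument and the Galois decorrelation of \cite{F13} for the theory with one point coloured $p-3$ (Roberts' irreducibility does extend); that is done in \cite{FL} and you are entitled to cite it, but it is not a corollary of the closed case. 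Finally, your fallback argument that centrality of $\rho_p^1(\pi_g(p))$ would ``contradict that $\mathbb U_{g,p}^1$ is strictly larger than $\mathbb U_{g,p}$'' is not a contradiction --- nothing a priori prevents $\Gamma_g(p)$ from carrying a projective representation of dimension $\dim W_{g,p}^1$ --- so it should be dropped in favour of the explicit eigenvalue computation.
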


As a consequence, we obtained in \cite{FL} that: 
\begin{proposition}
q-congruence subgroups of $\Gamma_g^1$ are congruence groups. 
\end{proposition}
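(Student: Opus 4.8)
The plan is to prove the (apparently stronger) statement that every principal q-congruence subgroup of $\Gamma_g^1$ already contains a \emph{geometric} congruence subgroup, namely the kernel of the natural action $\Gamma_g^1\to\Aut(\pi_g/N)$ on a finite characteristic quotient of $\pi_g=\pi_1(\Sigma_g)$. Since the congruence subgroups of $\Gamma_g^1$ are stable under finite intersections and under passage to finite-index overgroups, and a principal q-congruence subgroup is by definition a finite intersection $\bigcap_i\ker(\rho^1_{p_i}\bmod q_i)$ of kernels of reductions of the quantum representations, it suffices to fix a prime $p\geq 5$ (the case relevant to Proposition~\ref{surfzariskidensity}) and an integer $q\geq 1$, and to show that $K:=\ker\!\big(\Gamma_g^1\xrightarrow{\rho_p^1}P\mathbb U_{g,p}^1(\Z/q\Z)\big)$ is a congruence subgroup.

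First I would restrict $\rho_p^1$ to the point-pushing subgroup $\pi_g\triangleleft\Gamma_g^1$ of the Birman exact sequence $1\to\pi_g\to\Gamma_g^1\to\Gamma_g\to 1$. By Proposition~\ref{surfzariskidensity} the image $\widetilde{\Pi}_{g,p}\subset S\mathbb U_{g,p}^1(\O_p)$ is Zariski dense, and $S\mathbb U_{g,p}^1$, being a Weil restriction of a product of forms of $SU$, is connected, semisimple and simply connected. Hence the Strong Approximation Theorem~\ref{Nori} (Nori--Weisfeiler) applies: the closure of $\widetilde{\Pi}_{g,p}$ in $S\mathbb U_{g,p}^1(\widehat{\Z})$ is open. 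In particular, for every $q$ prime to a fixed finite exceptional set of bad primes, $\rho_p^1$ maps $\pi_g$ onto $S\mathbb U_{g,p}^1(\Z/q\Z)$, and its image $\overline{\Pi}_{p,q}$ in $P\mathbb U_{g,p}^1(\Z/q\Z)$ contains the mod-$q$ reduction of a congruence subgroup of the arithmetic lattice $P\mathbb U_{g,p}^1(\O_p)$. Put $N_q=\ker\!\big(\pi_g\xrightarrow{\rho_p^1}P\mathbb U_{g,p}^1(\Z/q\Z)\big)$, a finite-index subgroup of $\pi_g$ with $\pi_g/N_q\cong\overline{\Pi}_{p,q}$.

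Next I would make $N_q$ characteristic. Because $\pi_g$ is normal in $\Gamma_g^1$ and $\rho_p^1$ is defined on all of $\Gamma_g^1$, one has $\rho_p^1(\gamma x\gamma^{-1})\equiv\rho_p^1(\gamma)\,\rho_p^1(x)\,\rho_p^1(\gamma)^{-1}\pmod q$ for all $\gamma\in\Gamma_g^1$ and $x\in\pi_g$; thus $N_q$ is invariant under the conjugation action of $\Gamma_g^1$ on $\pi_g$, which surjects onto $\Aut^+(\pi_g)$. Intersecting $N_q$ with its image under one fixed orientation-reversing automorphism produces a characteristic finite-index $N\triangleleft\pi_g$ with $N\subseteq N_q$; it corresponds to a finite regular cover of $\Sigma_g$, so $\ker(\Gamma_g^1\to\Aut(\pi_g/N))$ is a congruence subgroup of $\Gamma_g^1$ in the geometric sense. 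It remains to check $\ker(\Gamma_g^1\to\Aut(\pi_g/N))\subseteq K$. If $h\in\Gamma_g^1$ acts trivially on $\pi_g/N$, it acts trivially on the quotient $\pi_g/N_q$; but the conjugation action of $\rho_p^1(\Gamma_g^1)$ on $\overline{\Pi}_{p,q}$, transported along $\overline{\Pi}_{p,q}\cong\pi_g/N_q$, is exactly that action, so $\rho_p^1(h)\bmod q$ centralizes $\overline{\Pi}_{p,q}$ inside $P\mathbb U_{g,p}^1(\Z/q\Z)$. Since $\overline{\Pi}_{p,q}$ is Zariski dense in the adjoint semisimple group $P\mathbb U_{g,p}^1$ (by the previous paragraph, at least for good $q$), its centralizer in the $\Z/q\Z$-points of this centre-free group is trivial, whence $\rho_p^1(h)\equiv 1\pmod q$ and $h\in K$. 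Therefore $K$, and hence every q-congruence subgroup of $\Gamma_g^1$, contains a geometric congruence subgroup, which is what we wanted.

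The hard part is the treatment of the finitely many bad levels $q$: Nori--Weisfeiler only controls the reduction of $\widetilde{\Pi}_{g,p}$ modulo $q$ prime to a finite exceptional set attached to the Hermitian lattice and the central isogeny $S\mathbb U_{g,p}^1\to P\mathbb U_{g,p}^1$, so for $q$ divisible by such primes (and for prime-power levels in general) one must argue separately that the finite centralizer of $\overline{\Pi}_{p,q}$ in $P\mathbb U_{g,p}^1(\Z/q\Z)$ is trivial — e.g. by reducing modulo the bad prime and exploiting that the mod-$\ell$ congruence kernel is an $\ell$-group on which $\overline{\Pi}_{p,\ell}$ acts without nonzero invariants, or by passing to a finer characteristic $N'\subseteq N_q\cap N_{q'}$ with $q'$ an auxiliary good level. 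Secondary points to pin down are the surjectivity $\Gamma_g^1\to\Aut^+(\pi_g)$ used for characteristicity, and the precise identification of the conjugation action of $\rho_p^1(\Gamma_g^1)$ on $\overline{\Pi}_{p,q}$ with the point-pushing action of $\Gamma_g^1$ on $\pi_g/N_q$; both are standard, but they are exactly what lets the centralizer triviality conclude the argument.
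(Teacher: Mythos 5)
Your argument is, in substance, the proof the paper is implicitly invoking: the survey gives no proof of this proposition beyond deriving it from the Zariski density of $\tilde\rho_p^1(\widetilde{\Pi}_{g,p})$ and citing Funar--Lochak, and your chain --- reduce to a single kernel $K=\ker(\rho_p^1 \bmod q)$ by closure of congruence subgroups under finite intersection and finite-index overgroups; apply Nori--Weisfeiler to the point-pushing subgroup to make $N_q=K\cap\pi_g$ of finite index; observe that $N_q$ is invariant under the conjugation action of $\Gamma_g^1$ (which is the full $\Aut^+(\pi_g)$-action) and pass to a characteristic core $N$; then use equivariance of $\rho_p^1$ to show that any $h$ acting trivially on $\pi_g/N_q$ has $\rho_p^1(h)\bmod q$ in the centralizer of $\overline{\Pi}_{p,q}$, which is trivial --- is exactly the intended route. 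The only loose end is the one you flag yourself: triviality of the centralizer of $\overline{\Pi}_{p,q}$ in $P\mathbb U_{g,p}^1(\Z/q\Z)$ when $q$ is divisible by the finitely many primes excluded by strong approximation. This is a genuine point requiring an argument (openness of the closure in $S\mathbb U_{g,p}^1(\widehat{\Z})$ only guarantees that $\overline{\Pi}_{p,q}$ contains the image of a fixed open subgroup, whose reduction at a bad prime power $\ell^a$ is the congruence kernel from level $\ell^{v}$ to level $\ell^{a}$), and your proposed remedy is the standard one: an element centralizing that congruence kernel acts trivially on the associated graded pieces, which are copies of the Lie algebra over $\Z/\ell\Z$ on which the adjoint group acts with trivial centralizer, so one deduces $\rho_p^1(h)\equiv 1$ modulo a bounded power of $\ell$ and absorbs the discrepancy by replacing $N$ with a finer characteristic subgroup. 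Modulo writing out that finite-level bookkeeping, the proof is correct and coincides with the one in the cited source.
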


Note that this method provided infinitely many finite simple quotients of $\pi_g$ 
which are characteristic.

\begin{question}[Zariski density]
Is it true that the  image of $\Gamma_g^r$ in the corresponding unitary group 
$S\mathbb U_g^r$ is Zariski dense for nontrivial colors on the punctures and 
prime levels $p\geq 5$?
\end{question}

\subsection{Handlebody subgroups}\label{section:handlebody}
Let $\mathcal H_g$ denote the mapping class group of the handlebody $H_g$ of genus $g$. 
We also denote by  $\mathcal H_g^r$ the mapping class group of the handlebody $H_g$ 
with $r$ marked points on the boundary. It is well-known that the restriction of 
homeomorphisms to the boundary induces an injective homomorphism 
\[ \mathcal H_g^r \to \Gamma_g^r\]
which permits to identify handlebody mapping class groups to subgroups of the mapping class groups. 

The action of homomorphisms on the free group $\mathbb F_g=\pi_1(H_g)$ yields a 
surjective homomorphism 
\[ \mathcal H_g \to \Out(\mathcal F_g)\]
whose kernel $Tw(H_g)$ is the group of twists of the handlebody.  
The group $Tw(H_g)$ is generated by the Dehn twists along meridians, namely essential 
simple curves on the surface $\Sigma_g$ bounding disks embedded into $H_g$.  
Let now $K_g=\ker(\pi_1(\Sigma_g)\to \pi_1(H_g))$ be the group of meridian curves. 
Then the Birman exact sequence induces a commutative diagram with exact rows and columns:  

\[
\begin{array}{cccccc}
K_g & \to & Tw(H_g^1) & \to & Tw(H_g) \\
\downarrow & & \downarrow & & \downarrow \\
\pi_g & \to & \mathcal H_g^1 & \to & \mathcal H_g\\
 \downarrow & & \downarrow & & \downarrow \\
\mathbb F_g & \to & \Aut(\mathbb F_g) & \to & \Out(\mathbb F_g)\\
\end{array}
\]

An immediate consequence of 
proposition \ref{surfzariskidensity} and (\cite{FL}, Remark 4.4) is the following: 

\begin{proposition}
The image $\tilde\rho_p^1(\mathcal H_g^1)$ is Zariski dense in $S\mathbb U_{g,p}^1$, if the level $p$ is prime, $p\geq 5$ and $g\geq 2$. In particular, $\mathcal H_g^1$ surjects onto infinitely many 
simple nonabelian groups of the form $P\mathbb U_{g,p}^1(\Z/q\Z)$, for large prime $q$. 
Thus $\mathcal H_g^1$ and $\Gamma_g^1$ are residually simple. 
\end{proposition}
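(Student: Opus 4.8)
The plan is to deduce everything from Proposition~\ref{surfzariskidensity} together with the Birman sequence for the handlebody, the Gilmer--Masbaum integrality theorem, Strong Approximation (Theorem~\ref{Nori}), and asymptotic faithfulness. The Zariski density assertion is the promised immediate consequence: the middle row of the displayed commutative diagram identifies $\pi_g=\pi_1(\Sigma_g)$ with the point--pushing kernel of $\mathcal H_g^1\to\mathcal H_g$, so $\pi_g\subseteq\mathcal H_g^1$, and since $\rho_p^1$ restricted to $\pi_g$ factors through $\pi_g(p)$ with image $\Pi_{g,p}$, after passing to the lift $S\mathbb U_{g,p}^1$ one gets $\widetilde\Pi_{g,p}\subseteq\tilde\rho_p^1(\mathcal H_g^1)$; this is exactly \cite{FL}, Remark~4.4. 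Proposition~\ref{surfzariskidensity} already asserts that the smaller group $\tilde\rho_p^1(\widetilde\Pi_{g,p})$ is Zariski dense in $S\mathbb U_{g,p}^1$ for prime $p\geq5$, $g\geq2$, so a fortiori $\tilde\rho_p^1(\mathcal H_g^1)$ is Zariski dense; the same remark applied to $\Gamma_g^1\supseteq\mathcal H_g^1$ gives Zariski density of $\tilde\rho_p^1(\Gamma_g^1)$ as well.

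For the finite simple quotients, fix a prime $p\equiv 3\ ({\rm mod}\ 4)$, $p\geq5$. By the (punctured analogue of the) Gilmer--Masbaum theorem \cite{GM} the image $\tilde\rho_p^1(\mathcal H_g^1)$ lies in the $\O_p$--points of $S\mathbb U_{g,p}^1$, a lattice in the $\Q$--group $S\mathbb U_{g,p}^1$, which is semisimple and, being a form of $SL_N$, simply connected. Combining Zariski density with the Nori--Weisfeiler Strong Approximation Theorem~\ref{Nori}, the closure of $\tilde\rho_p^1(\mathcal H_g^1)$ in $S\mathbb U_{g,p}^1(\widehat\Z)$ is open, so for all but finitely many primes $q$ the reduction mod $q$ sends $\tilde\rho_p^1(\mathcal H_g^1)$ onto $S\mathbb U_{g,p}^1(\Z/q\Z)$. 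Passing to the adjoint form and to one of the finite groups of Lie type occurring as a factor of the product over the places of $\O_p$ above $q$, one obtains, for infinitely many $q$, a surjection of $\mathcal H_g^1$ onto the nonabelian finite simple group $P\mathbb U_{g,p}^1(\Z/q\Z)$; these are pairwise non-isomorphic along a suitable infinite set of $q$ since their orders tend to infinity. The identical argument with $\Gamma_g^1$ in place of $\mathcal H_g^1$ gives the same for $\Gamma_g^1$.

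Finally, for residual simplicity let $1\neq x\in\mathcal H_g^1\subseteq\Gamma_g^1$. By asymptotic faithfulness of the $SU(2)/SO(3)$ quantum representations (Andersen \cite{A1}, Freedman--Walker--Wang \cite{FWW}), whose kernels intersect trivially even after restriction to $\mathcal H_g^1$, there is a prime $p\equiv 3\ ({\rm mod}\ 4)$, $p\geq5$, with $\rho_p^1(x)\neq1$; the image $\rho_p^1(\mathcal H_g^1)$ sits in the arithmetic group $P\mathbb U_{g,p}^1(\O_p)$, which is residually finite through its congruence quotients, so there is a prime $q$, which we may take in the good infinite set of the previous step, for which $\rho_p^1(x)$ has nontrivial image in $P\mathbb U_{g,p}^1(\Z/q\Z)$, hence in the relevant simple factor. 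Composing gives a homomorphism from $\mathcal H_g^1$ onto a nonabelian finite simple group not killing $x$, and since $x$ was arbitrary $\mathcal H_g^1$, and likewise $\Gamma_g^1$, is residually simple. The conceptual content is the first step, which is genuinely immediate; the work in the other two steps is assembling known theorems, and the main obstacle I anticipate is making sure the reductions $P\mathbb U_{g,p}^1(\Z/q\Z)$ really do produce nonabelian finite simple groups for infinitely many $q$ — one must unwind the product--over--real--places structure of the Weil restriction $S\mathbb U_{g,p}^1=\prod_\sigma SU(f^\sigma)$, pass correctly between the unitary, special and adjoint forms, and discard the finitely many degenerate cases where $PSU$ over a small finite field fails to be simple; a secondary nuisance is checking that asymptotic faithfulness still supplies a prime $p\equiv 3\ ({\rm mod}\ 4)$ as required by the integrality input.
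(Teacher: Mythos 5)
Your proposal is correct and follows essentially the route the paper intends: the paper states this as an immediate consequence of Proposition~\ref{surfzariskidensity} together with the containment $\pi_g\subseteq\mathcal H_g^1$ coming from the Birman row of the displayed diagram (this is the content of \cite{FL}, Remark~4.4), and the finite simple quotients and residual simplicity are obtained exactly as you do, by the Gilmer--Masbaum integrality, the Nori--Weisfeiler strong approximation pipeline already set up in the preceding subsection, and asymptotic faithfulness. Your care about the congruence class $p\equiv 3\ (\mathrm{mod}\ 4)$ and the passage from $S\mathbb U$ to the adjoint simple factors is a reasonable filling-in of details the paper leaves implicit.
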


We derive that the Frattini subgroup is trivial for $\Gamma_g^1$, complementing the 
result obtained in \cite{MR2} for closed surfaces: 
\begin{corollary}
If $\Phi_f(G)$ denotes the intersection of all finite index maximal subgroups of $G$, then  
$\Phi_f(\Gamma_g^1)=1$, for $g\geq 2$. 
\end{corollary}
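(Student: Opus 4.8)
The plan is to deduce this from the preceding proposition, which asserts that $\Gamma_g^1$ is residually simple, meaning the intersection of all finite index normal subgroups $N$ with $\Gamma_g^1/N$ simple (and in fact nonabelian, of the form $P\mathbb U_{g,p}^1(\Z/q\Z)$) is trivial. The key elementary fact is that if $G/N$ is a nonabelian simple group, then $N$ is a maximal finite index subgroup of $G$: any subgroup $M$ with $N \subsetneq M \subseteq G$ maps onto a nontrivial subgroup of the simple group $G/N$, which must be all of $G/N$ by simplicity, forcing $M = G$. Hence each such $N$ is in particular a finite index maximal subgroup, so $\Phi_f(\Gamma_g^1) \subseteq N$ for every one of these $N$.

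First I would recall the definition: $\Phi_f(G)$ is the intersection of all maximal subgroups of finite index. Then I would fix the family of epimorphisms $\rho_p^1 \bmod q : \Gamma_g^1 \to P\mathbb U_{g,p}^1(\Z/q\Z)$ guaranteed by the previous proposition (via Strong Approximation applied to the Zariski dense image $\tilde\rho_p^1(\mathcal H_g^1) \subseteq \tilde\rho_p^1(\Gamma_g^1)$, so that the image is open and reduces onto the finite simple groups $P\mathbb U_{g,p}^1(\Z/q\Z)$ for large primes $q$). For each such epimorphism, the kernel $N_{p,q}$ is a normal subgroup of finite index with simple nonabelian quotient, hence a finite index maximal subgroup by the observation above, hence contains $\Phi_f(\Gamma_g^1)$. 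Taking the intersection over all admissible $(p,q)$ and invoking residual simplicity (the statement of the previous proposition) gives $\bigcap_{p,q} N_{p,q} = 1$, so $\Phi_f(\Gamma_g^1) = 1$.

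The only genuine subtlety — and the step I would be most careful about — is making sure the "residually simple" conclusion of the previous proposition really does supply enough finite simple quotients to separate all nontrivial elements; but this is precisely what is asserted there ("residually simple"), so under the license to assume earlier results the argument is short. I would phrase the maximality lemma cleanly as a standalone sentence, since it is the conceptual crux, and then the corollary follows in two lines. Thus the proof is essentially: residually simple $\Rightarrow$ trivial Frattini subgroup, because simple-quotient kernels are maximal.

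\begin{proof}
Recall that $\Phi_f(G)$ is the intersection of all maximal subgroups of $G$ having finite index. We use the elementary fact that if $N\trianglelefteq G$ is a normal subgroup of finite index with $G/N$ a (nonabelian) simple group, then $N$ is a maximal subgroup of finite index: indeed, if $N\subseteq M\subseteq G$, then $M/N$ is a subgroup of the simple group $G/N$, hence either trivial or everything, so $M=N$ or $M=G$. Consequently $\Phi_f(G)\subseteq N$ for every such $N$.

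By the previous proposition, for each prime $p\geq 5$ and each large prime $q$ there is an epimorphism $\Gamma_g^1\to P\mathbb U_{g,p}^1(\Z/q\Z)$ onto a finite nonabelian simple group; let $N_{p,q}$ be its kernel. Each $N_{p,q}$ is a finite index maximal subgroup of $\Gamma_g^1$ by the observation above, so $\Phi_f(\Gamma_g^1)\subseteq \bigcap_{p,q}N_{p,q}$. Since $\Gamma_g^1$ is residually simple, this intersection is trivial, whence $\Phi_f(\Gamma_g^1)=1$ for $g\geq 2$.
\end{proof}
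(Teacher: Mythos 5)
Your key lemma is false, and this is a genuine gap rather than a presentational issue. You claim that if $N\trianglelefteq G$ has finite index and $G/N$ is simple, then $N$ is a maximal subgroup of $G$, arguing that any intermediate $M$ gives a subgroup $M/N$ of the simple group $G/N$ which must be ``either trivial or everything.'' Simplicity only forbids proper nontrivial \emph{normal} subgroups; a nonabelian finite simple group has plenty of proper nontrivial subgroups. Concretely, $G=A_5$ and $N=1$ satisfy your hypotheses, yet $N$ is very far from maximal. So the kernels $N_{p,q}$ you produce are maximal \emph{normal} subgroups, not maximal subgroups, and they need not contain $\Phi_f(\Gamma_g^1)$ for the reason you give; the chain of inclusions $\Phi_f(\Gamma_g^1)\subseteq\bigcap_{p,q}N_{p,q}$ is unjustified.

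The conclusion is still an easy consequence of residual simplicity, but by a different mechanism: a finite nonabelian simple group $Q$ has trivial Frattini subgroup (its Frattini subgroup is a proper normal, indeed nilpotent, subgroup, hence trivial by simplicity). Given $1\neq x\in\Gamma_g^1$, residual simplicity provides a surjection $\varphi:\Gamma_g^1\to Q=P\mathbb U_{g,p}^1(\Z/q\Z)$ with $\varphi(x)\neq 1$; since $\Phi(Q)=1$, there is a maximal subgroup $\overline M<Q$ with $\varphi(x)\notin\overline M$, and by the correspondence theorem $M=\varphi^{-1}(\overline M)$ is a finite index maximal subgroup of $\Gamma_g^1$ not containing $x$. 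Hence $x\notin\Phi_f(\Gamma_g^1)$. This is the standard argument (it is the one underlying the cited result of Masbaum--Reid for closed surfaces), and it is what the paper intends when it says the corollary is ``derived'' from residual simplicity; your write-up needs the maximality step replaced by this pullback of maximal subgroups from the finite simple quotient.
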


By direct computation of the image of two twists along meridians which intersect in two points, as 
in \cite{KS}, we derive that  $\rho_p^1(K_g)$ and hence  $\rho_p^1(Tw(\mathcal H_g^1))$ are infinite and hence they  are topologically dense in $PU^1_{g,p}$. Then the method of \cite{FL} shows that the above result also holds for the twist groups: 

\begin{proposition}
The image  $\rho_p^1(K_g)$ and so $\rho_p^1(Tw(\mathcal H_g^1))$  is Zariski dense in $P\mathbb U_{g,p}^1$, if the level $p$ is prime, $p\geq 5$ and $g\geq 2$. In particular 
$Tw(\mathcal H_g^1))$ surjects onto infinitely many simple nonabelian groups of the form $P\mathbb U_{g,p}^1(\Z/q\Z)$, for large prime $q$.    
\end{proposition}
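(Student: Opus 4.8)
The plan is to repeat, for the normal subgroup $K_g$, the bootstrap used just above for $\mathcal H_g^1$: combine a normality statement with the almost simplicity of the ambient $\Q$-group, so that everything reduces to an infiniteness statement which is then supplied by an explicit computation. First I would record that $K_g$ is normal in $\mathcal H_g^1$. Indeed $K_g=\ker(\pi_1(\Sigma_g)\to\pi_1(H_g))$ and the surjection $\pi_g\to\mathbb F_g$ is equivariant for the conjugation action of $\mathcal H_g^1$ on the normal subgroup $\pi_g\subset\Gamma_g^1$ and for the action $\mathcal H_g^1\to\Aut(\mathbb F_g)$; this is exactly the commutativity of the left column of the diagram above, so $K_g$ is preserved by $\mathcal H_g^1$. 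Hence $\rho_p^1(K_g)\triangleleft\rho_p^1(\mathcal H_g^1)$, and therefore the Zariski closure $\overline{\rho_p^1(K_g)}$ is a normal algebraic subgroup of $\overline{\rho_p^1(\mathcal H_g^1)}$, which equals $P\mathbb U_{g,p}^1$ by the handlebody version of Proposition \ref{surfzariskidensity} recalled above (the projection $S\mathbb U_{g,p}^1\to P\mathbb U_{g,p}^1$ carries Zariski dense subsets to Zariski dense subsets).

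Next I would use that $P\mathbb U_{g,p}^1=\mathrm{Res}_{\mathbb K_p/\Q}PU(H_{g,p}^1)$ is an almost simple $\Q$-group: over $\overline\Q$ it is the product $\prod_\sigma PU(f^\sigma)$ of absolutely almost simple groups and $\mathrm{Gal}(\overline\Q/\Q)$ permutes these factors transitively, so there is no proper nontrivial Galois-stable connected normal subgroup. Since the Zariski closure of a subset of $P\mathbb U_{g,p}^1(\Q)=PU(H_{g,p}^1)(\mathbb K_p)$ is automatically $\Q$-defined (we are in characteristic $0$), it follows that $\overline{\rho_p^1(K_g)}$ is either finite or all of $P\mathbb U_{g,p}^1$. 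Thus the Zariski density statement reduces to the assertion that $\rho_p^1(K_g)$ is \emph{infinite}, which is precisely the direct computation recalled above: two meridians of $\Sigma_g$ meeting in two points produce, through the associated point-pushing maps (products of meridian twists), a pair of elements of $\rho_p^1(K_g)$ which, by a calculation along the lines of \cite{KS}, generate an infinite group for every prime $p\geq5$ and every $g\geq2$. Hence $\overline{\rho_p^1(K_g)}=P\mathbb U_{g,p}^1$, and a fortiori $\rho_p^1(Tw(\mathcal H_g^1))$ is Zariski dense, since $K_g\subseteq Tw(\mathcal H_g^1)$.

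Finally, for the surjections onto finite simple groups I would feed this Zariski density into the Strong Approximation argument used earlier in this section and in \cite{FL}, Remark 4.4: pass to the simply connected cover $S\mathbb U_{g,p}^1$, restrict to the integral points on which one has the relevant integrality (Gilmer--Masbaum, or its $\mathcal S$-integral variant), and apply Theorem \ref{Nori} (Nori--Weisfeiler) to the resulting Zariski-dense subgroup of a simply connected $\Q$-group, obtaining that its closure in the congruence topology is open in $S\mathbb U_{g,p}^1(\widehat\Z)$. Then $\rho_p^1(K_g)$, hence $Tw(\mathcal H_g^1)$, surjects onto $P\mathbb U_{g,p}^1(\Z/q\Z)$ for all but finitely many primes $q$, and these finite groups are simple nonabelian once $q$ is large. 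I expect the only genuinely non-formal ingredient to be the infiniteness input of the second paragraph --- that the \cite{KS}-type computation really does apply to \emph{meridian} curves and yields an infinite group uniformly in $g$ and $p$ --- since the density bootstrap and the passage to finite quotients are then word-for-word the arguments already used above for $\widetilde{\Pi}_{g,p}$ and for $\mathcal H_g^1$.
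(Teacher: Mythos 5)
Your proposal is correct and follows the same overall strategy as the paper: the proof in the text is exactly the one-paragraph bootstrap "KS-type computation gives infiniteness of $\rho_p^1(K_g)$, normality plus simplicity upgrades this to density, strong approximation then yields the finite simple quotients," and you have reconstructed all three steps, including the correct observation that $K_g=\pi_g\cap Tw(H_g^1)$ is normal in $\mathcal H_g^1$ and that everything reduces to the infiniteness input from \cite{KS}. The one place where your route genuinely differs is the density upgrade itself: the paper first deduces \emph{topological} density of $\rho_p^1(K_g)$ in the single real form $PU_{g,p}^1$ (again by normality inside a dense subgroup of a simple compact group) and then invokes "the method of \cite{FL}" to promote this to Zariski density in the full restriction of scalars $P\mathbb U_{g,p}^1=\prod_\sigma PU(f^\sigma)$, whereas you argue once and for all at the level of the $\Q$-group, using that the Zariski closure of a set of $\Q$-rational points is $\Q$-defined and that $\mathrm{Res}_{\mathbb K_p/\Q}PU$ is adjoint $\Q$-simple because Galois permutes the factors transitively. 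Your version is the cleaner packaging --- it absorbs the Galois-conjugate bookkeeping of \cite{FL} into a single structural statement --- at the cost of having to know that the closure is normalized by all of $P\mathbb U_{g,p}^1$, which you correctly supply from the Zariski density of $\rho_p^1(\mathcal H_g^1)$ established just above. Both arguments leave the same non-formal kernel untouched, namely that the explicit computation with two meridian twists intersecting in two points really does land in $K_g$ and generate an infinite group for all primes $p\geq 5$; the paper asserts this with the same brevity you do.
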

This is not surprising, as conjecturally, every irreducible subgroup of $\Gamma_{g}^r$ 
which is not virtually abelian should have a large image by $\rho_p^r$. 
However, the case of $\mathcal H_g^1$ is interesting by itself. In fact 
the family of finite quotients obtained above  for large $q$ do not separate the subgroup $\mathcal H_g^1$ within $\Gamma_g^1$, as both groups have the same images under $\rho_p^1$.
Let us project down to $\mathcal H_g$ by means of the forgetful homomorphism 
$p:\Gamma_g^1\to \Gamma_g$. 
Although $\rho_p(\Gamma_g)$ is Zariski dense  in $P\mathbb U_{g,p}$, 
the image $\rho_p(\mathcal H_g)$ of the handlebody group is not. Indeed the restriction 
of $\rho_p$ to $\mathcal H_g$ is not  even irreducible, as it preserves the null-vector 
$w_{g,p}\in W_{g,p}$, which is the vector associated to the handlebody $H_g$. This holds more generally for any subgroup of mapping classes extending 
to a compact 3-manifold. 
In particular we have a map $\theta_p: \Gamma_g/\mathcal H_g\to PW_{g,p}$ defined 
by 
\[ \theta_p(x)= \rho_p(x)w_{g,p}\in PW_{g,p}\]
where $PW$ denotes the projective space associated to the vector space $W$. 
The topological density of $\rho_p(\Gamma_g)$ implies that the image of 
$\theta_p$ is topologically dense in $PW_{g,p}$ and in particular it is infinite, when $p$ is prime. A simpler argument consists in following the proof in \cite{F} for the 
infiniteness of $\rho_p(\Gamma_g)$. There is a subrepresentation of $\rho_p|_{B_3}$ 
which can be identified with the Burau representation at a root of unity. 
But these representations have not finite orbits (see e.g. \cite{FK}).  
Since the map $\theta_p$ factors through $\Gamma_g(p)/\mathcal H_g(p)$, we derive: 

\begin{proposition}
The groups $\mathcal H_g^r(p)$  have infinite index in $\Gamma_g^r(p)$, when the latter are infinite. 
\end{proposition}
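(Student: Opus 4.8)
The plan is to treat all $r$ at once by transporting the map $\theta_p$ to $\Gamma_g^r$ through the point-forgetting homomorphism. Let $\pi\colon\Gamma_g^r\to\Gamma_g$ be the surjective homomorphism erasing the $r$ marked points; restricting homeomorphisms to the boundary shows that $\pi$ carries $\mathcal H_g^r$ into $\mathcal H_g$, since a mapping class extending over the handlebody still extends after the marked points are forgotten. Viewing $\theta_p$ as the orbit map $\Gamma_g\to PW_{g,p}$, $y\mapsto\rho_p(y)\,w_{g,p}$, set $\Theta_p^r=\theta_p\circ\pi\colon\Gamma_g^r\to PW_{g,p}$; for $r=0$ this is exactly the map used just above.

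First I would check that $\Theta_p^r$ descends to a map on the coset set $\Gamma_g^r(p)/\mathcal H_g^r(p)$. It is constant on right $\mathcal H_g^r$-cosets because $\pi(\mathcal H_g^r)\subseteq\mathcal H_g$ and $\rho_p(\mathcal H_g)$ fixes the line $\langle w_{g,p}\rangle$ spanned by the handlebody vector. It is constant on $\Gamma_g^r[p]$-cosets because $\pi$ sends a $p$-th power of a Dehn twist along a simple closed curve either to $1$ (if the curve bounds a disc once the marked points are erased) or to a $p$-th power of a Dehn twist of $\Sigma_g$, so that $\pi(\Gamma_g^r[p])\subseteq\Gamma_g[p]$; together with the fact, recalled just above, that $\theta_p$ factors through $\Gamma_g(p)/\mathcal H_g(p)$, this yields the desired descent, and the induced map has the same image as $\Theta_p^r$.

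Since $\pi$ is onto, $\mathrm{im}(\Theta_p^r)=\mathrm{im}(\theta_p)$, namely the $\rho_p(\Gamma_g)$-orbit of $w_{g,p}$ in $PW_{g,p}$, which was seen above to be infinite: topologically dense in $PW_{g,p}$ for prime $p\ge5$ by the Zariski density of $\rho_p(\Gamma_g)$, and infinite in general because $\rho_p$ restricted to a suitable $B_3\subset\Gamma_g$ contains a Burau representation at a root of unity with no finite orbit (cf.\ \cite{F,FK}) --- this last mechanism being precisely what forces $\Gamma_g^r(p)$ to be infinite, so the hypothesis guarantees $\#\,\mathrm{im}(\theta_p)=\infty$. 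Consequently
\[
\#\big(\Gamma_g^r(p)/\mathcal H_g^r(p)\big)\ \ge\ \#\,\mathrm{im}(\theta_p)\ =\ \infty,
\]
i.e.\ $\mathcal H_g^r(p)$ has infinite index in $\Gamma_g^r(p)$.

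The only genuinely nontrivial input --- and hence the main obstacle --- is the infiniteness of $\mathrm{im}(\theta_p)$ beyond the range of primes: one has to pin down an embedding $B_3\hookrightarrow\Gamma_g$ on which $\rho_p$ restricts to a Burau representation, check that $w_{g,p}$ already has infinite $B_3$-orbit (so the orbit of the ambient group does too), and verify that the relevant root of unity is not among the finitely many whose Burau representation has finite orbits --- equivalently, that ``$\Gamma_g^r(p)$ infinite'' is exactly the regime in which $\#\,\mathrm{im}(\theta_p)=\infty$. Everything else --- compatibility of $\pi$ with the handlebody subgroups and with the subgroups $\Gamma_\bullet[p]$, and the cardinality bound displayed above --- is formal.
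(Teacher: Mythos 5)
Your argument is essentially the paper's own: the paper defines exactly the orbit map $\theta_p(x)=\rho_p(x)w_{g,p}\in PW_{g,p}$, notes that it is well defined on $\Gamma_g/\mathcal H_g$ because $\rho_p(\mathcal H_g)$ fixes the handlebody vector, shows its image is infinite (by topological density for prime $p$, and in general via the Burau subrepresentation of $\rho_p|_{B_3}$ having no finite orbits, citing \cite{F,FK}), and observes that $\theta_p$ factors through $\Gamma_g(p)/\mathcal H_g(p)$. Your additional care in transporting this through the point-forgetting homomorphism to handle general $r$ (checking $\pi(\mathcal H_g^r)\subseteq\mathcal H_g$ and $\pi(\Gamma_g^r[p])\subseteq\Gamma_g[p]$) is a correct filling-in of a step the paper leaves implicit, not a different route.
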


This method extends readily to prove the following: 

\begin{proposition}
Let $G\subset \Gamma_{g}$ be a reducible subgroup. Then 
$G(p)$ is of infinite index in $\Gamma_{g}(p)$, for prime $p\geq 5$, $g\geq 2$. 
\end{proposition}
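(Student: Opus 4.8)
The plan is to mimic the argument that was just used for the handlebody group $\mathcal{H}_g$, adapting it to an arbitrary reducible subgroup $G \subseteq \Gamma_g$. By definition, reducibility of $G$ means that the quantum representation $\rho_p$ (or rather its lift $\widetilde{\rho}_p$ on the central extension) preserves a proper nonzero subspace $U \subsetneq W_{g,p}$ when restricted to $G$. As in the handlebody case, the idea is to extract from this invariant subspace a map on the coset space $\Gamma_g/G$ landing in a projective variety, and then show that this map has infinite image, which forces $G(p)$ to have infinite index in $\Gamma_g(p)$ because the map factors through $\Gamma_g(p)/G(p)$.

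First I would set up the coset map carefully. Pick a proper nonzero $G$-invariant subspace $U$ of $W_{g,p}$ and let $[U]$ be the corresponding point in the relevant Grassmannian $\mathrm{Gr}(d, W_{g,p})$, where $d = \dim U$. Define $\theta_p \colon \Gamma_g/G \to \mathrm{Gr}(d, W_{g,p})$ by $\theta_p(xG) = \rho_p(x)\,[U]$; this is well-defined precisely because $U$ is $G$-invariant, and it visibly factors through $\Gamma_g(p)/G(p)$ since $\rho_p$ factors through $\Gamma_g(p)$ (for odd $p$; through $\Gamma_g(2p)$ for even $p$, but since $p \geq 5$ is prime we are in the odd case). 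So it suffices to prove that the image of $\theta_p$ is infinite.

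The key step is the infiniteness of the image. Here I would invoke the Zariski density of $\rho_p(\Gamma_g)$ in $P\mathbb{U}_{g,p}$ (equivalently Proposition \ref{zariskidense} for the lift $\widetilde{\mathcal{L}}_p$ in $S\mathbb{U}_{g,p}$, which is valid for prime $p \geq 5$ and $g \geq 2$). The stabilizer in $P\mathbb{U}_{g,p}$ of the point $[U] \in \mathrm{Gr}(d, W_{g,p})$ is a proper Zariski-closed subgroup (proper because the $SU(2)/SO(3)$ TQFT representation is irreducible, so no proper nonzero subspace is fixed by the whole group), hence has infinitely many cosets; since $\rho_p(\Gamma_g)$ is Zariski dense, its orbit of $[U]$ cannot be contained in any finite union of cosets of that stabilizer, so the orbit — which is exactly the image of $\theta_p$ — is infinite. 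Therefore $\Gamma_g(p)/G(p)$ is infinite, i.e. $G(p)$ has infinite index in $\Gamma_g(p)$. Finally I would remark that the statement for the punctured/bordered versions $\Gamma_{g,k}^r(p)$ follows the same way using Proposition \ref{surfzariskidensity} and its consequences, and that the alternative elementary route via the Burau subrepresentation of $\rho_p|_{B_3}$ (as in \cite{F}, \cite{FK}) also applies once one checks the invariant subspace survives when restricted to the relevant $B_3$.

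I expect the main obstacle to be a clean treatment of the case where the invariant subspace $U$ is itself not preserved by a conjugate of $G$ in a way compatible with the $B_3$ subgroup — i.e. verifying that the orbit argument genuinely sees the Zariski density rather than collapsing. Concretely, one must be sure that the stabilizer of $[U]$ in $P\mathbb{U}_{g,p}$ is a \emph{proper} algebraic subgroup; this is where irreducibility of the ambient quantum representation is essential, and one should cite the relevant irreducibility statement (e.g. from \cite{Roberts} or the standard TQFT references) rather than leave it implicit. A secondary, more bookkeeping, point is passing correctly between the linear representation $\widetilde{\rho}_p$ on $S\mathbb{U}_{g,p}$ and the projective one on $P\mathbb{U}_{g,p}$, so that "reducible" is interpreted consistently and the central extension does not interfere with the coset map being well-defined.
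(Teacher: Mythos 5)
Your overall mechanism is the same as the paper's: build a map from $\Gamma_g/G$ into a homogeneous space for $PU_{g,p}$, observe that it factors through $\Gamma_g(p)/G(p)$ because $\rho_p$ does, and get infinite image from the density of $\rho_p(\Gamma_g)$. However, there is a genuine gap at the very first step: you take as the \emph{definition} of a reducible subgroup that $\rho_p|_G$ preserves a proper nonzero subspace of $W_{g,p}$. In this paper (and in the standard Nielsen--Thurston/Ivanov terminology it follows) a reducible subgroup of $\Gamma_g$ is one that preserves the isotopy class of an essential multicurve; this is a topological condition, not a condition on the quantum representation. The two notions are explicitly kept apart in the surrounding text: the handlebody groups $\mathcal H_g$ are \emph{irreducible} subgroups of $\Gamma_g$ whose quantum image is nevertheless reducible (it fixes the vector $w_{g,p}$), and whether every irreducible, non virtually abelian subgroup has irreducible image under $\rho_p$ is posed as an open question. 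So you cannot start from an invariant subspace $U$; producing a proper closed subgroup of $PU_{g,p}$ containing $\rho_p(G)$ is precisely the content you need to extract from the topological hypothesis, and your proposal omits it.

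The missing bridge is short but essential, and it is what the paper's proof supplies. A reducible $G$ lies in the stabilizer of an essential multicurve, and (after a routine finite-index reduction) one may assume $G={\rm Stab}(\gamma)$ for a single nonperipheral simple closed curve $\gamma$. Then $T_\gamma$ commutes with every element of $G$, so $\rho_p(G)$ lies in the centralizer of $\rho_p(T_\gamma)$; since $\rho_p(T_\gamma)$ is a nonscalar element of finite order (its eigenvalues are distinct roots of unity indexed by the admissible colors of $\gamma$), this centralizer --- and hence the closure of $\rho_p(G)$ --- is a proper closed subgroup of $PU_{g,p}$. From there the paper maps $\Gamma_g/G$ into $PU_{g,p}$ modulo the closure of $\rho_p(G)$ and concludes by density, exactly as you do; alternatively, the eigenspace decomposition of $\rho_p(T_\gamma)$ hands you an invariant subspace (for a finite-index subgroup of $G$, since the eigenspaces may a priori be permuted), after which your Grassmannian orbit argument goes through verbatim. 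Two minor points: the properness of the stabilizer of a point of the Grassmannian in $PU_{g,p}$ is automatic for $0<\dim U<\dim W_{g,p}$ and does not require irreducibility of the ambient representation; and your closing remarks about the $B_3$ Burau subrepresentation are not needed once the density argument is in place.
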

\begin{proof}
It is enough to consider the case when $G$ is the stabilizer of a simple closed 
nonperipheral curve $\gamma$ on $\Sigma_{g,k}^r$. Then $\rho_p(G)$ is centralized 
by $\rho_p(T_{\gamma})$ and hence its topological closure is a proper subgroup of 
$PU_{g,p}$. The map sending $x\in \Gamma_g/G$ to the class of $\rho_p(x)$  
in the homogeneous space obtained by quotienting $PU_{g,p}$ by the closure of $\rho_p(G)$, 
has infinite image, by density. But this map factors through  $\Gamma_g(p)/G(p)$, 
thereby proving the claim. 
\end{proof}

\begin{question}
If $G$ is an irreducible subgroup of $\Gamma_g$ which is neither virtually abelian nor conjugate 
into a subgroup of mapping classes extending to some compact orientable 3-manifold (like $\mathcal H_g$), does it have irreducible or even Zariski dense image in $S\mathbb U_{g,p}$, for large enough primes $p$? 
Moreover, in the case when it is Zariski dense, is $\rho_p(G)$ of finite index in $\rho_p(\Gamma_g)$? A particularly interesting case is $D_3I_{g,2}$, the second 
commutator group of the Johnson kernel $\mathcal I_{g,2}$. 
If $\varphi\in \Gamma_g$ is such that $\rho_p(\varphi)$ fixes the line spanned by the null-vector 
$w_{g,p}$, for infinitely many $p$, does it follow that  $\varphi\in \mathcal H_g$? 
\end{question}

\subsection{Power quotients of surface groups}
It is known (see e.g. discussion in \cite{FLM}) that arithmetic groups have the 
Congruence Subgroup Property if and only if their profinite completion is boundedly generated as a profinite group, namely a finite product of pro-cyclic groups. 
In our case $\mathbb U_p(\Z)$ have CSP, and thus their profinite completions  $\mathbb U_p(\widehat{\Z})$ are boundedly generated as profinite groups, see also 
\cite{CK}. However, this does not implies that $\mathbb U_p(\Z)$ have bounded generation. 
Note that $\Gamma_g$ are not boundedly generated (see \cite{FLM}), as one proved that their pro-$p$ 
completion is not a $p$-adic analytic group. We can slightly improve this, by using a recent result from 
\cite{CRRZ}:

\begin{proposition}
$\pi_g/\pi_g[p]$ is not boundedly generated for large $p$. 
If $\Gamma_g/\Gamma_g[p]$ is infinite, then it is not boundedly generated. 
\end{proposition}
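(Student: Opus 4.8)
The plan is to combine a bounded-generation obstruction coming from \cite{CRRZ} with the established non-triviality (infiniteness) of the relevant power quotients, exploiting the fact that these quotients are generated by torsion elements. Recall that a group $G$ is \emph{boundedly generated} if there exist finitely many elements $x_1,\ldots,x_m\in G$ such that $G=\langle x_1\rangle\langle x_2\rangle\cdots\langle x_m\rangle$; a well-known necessary condition (used throughout \cite{FLM}, \cite{CRRZ}) is that the pro-$p$ completion of a boundedly generated group must be a $p$-adic analytic group, hence of finite rank. So to prove both statements it suffices to show that the pro-$p$ completions of $\pi_g/\pi_g[p]$ and of $\Gamma_g/\Gamma_g[p]$ (when infinite) fail this finiteness-of-rank property, or to invoke directly whatever clean ``torsion generators obstruct bounded generation'' statement \cite{CRRZ} provides.

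First I would treat $\pi_g/\pi_g[p]$. Here $\pi_g[p]$ is the normal closure of $p$-th powers of simple loops, so $\pi_g/\pi_g[p]$ is a quotient of the full $p$-th power (Burnside-type) quotient $\pi_g/\pi_g^p$; it is generated by the images of the $2g$ standard generators, each of order dividing $p$. For $p$ large this group is known to be infinite (indeed it surjects onto $\Pi_{g,p}$, which is infinite for prime $p\geq 5$ by the discussion around Proposition \ref{surfzariskidensity}, and more elementarily it surjects onto large finite quotients by asymptotic faithfulness). I would then apply the main result of \cite{CRRZ} — which gives a lower bound on the ``width'' or non-bounded-generation of quotients of surface groups by powers, or more precisely shows that an infinite group generated by torsion elements satisfying suitable relations cannot be boundedly generated because its pro-$p$ completion is infinitely generated / not $p$-adic analytic — to conclude. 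The key point to check is that the hypotheses of the cited theorem (e.g. the group being infinite, torsion-generated, and the exponent $p$ being large enough to force non-linearity / infinite pro-$p$ rank) are met by $\pi_g/\pi_g[p]$.

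For the second statement, if $\Gamma_g/\Gamma_g[p]$ is infinite, I would use the Birman exact sequence
\[ 1\to \pi_g(p)\to \Gamma_g^1(p)\to \Gamma_g(p)\to 1 \]
recalled earlier, together with Lemma \ref{fitorelli} and Proposition \ref{lcseries}: the Torelli quotient $\mathcal T_g(p)$ is a finite-index subgroup of $\Gamma_g(p)$, and it is generated by (images of) BP pairs, which have order $p$. A finite-index subgroup of a boundedly generated group is boundedly generated, so it would suffice to show $\mathcal T_g(p)$ is not boundedly generated; but $\mathcal T_g(p)$ is an infinite (by hypothesis, since it has finite index in the infinite group $\Gamma_g(p)$) torsion-generated group, so the same \cite{CRRZ}-type argument applies. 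Alternatively, and perhaps more cleanly, $\Gamma_g(p)$ itself is generated by the finitely many Dehn twists, each of order dividing $p$, so one can apply the obstruction directly to $\Gamma_g(p)$ once one knows it is infinite.

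The main obstacle, and the step I expect to require the most care, is verifying that the precise statement in \cite{CRRZ} applies in our setting: that reference's result presumably concerns quotients of surface groups (or hyperbolic groups, or right-angled Artin/Coxeter-type groups) by power subgroups, and one must match the normal subgroup $\pi_g[p]$ (generated by $p$-th powers of \emph{simple} loops only, not all elements) to their framework, and similarly confirm that $\Gamma_g(p)$ or $\mathcal T_g(p)$ falls under an analogous torsion-generation obstruction. If \cite{CRRZ} only directly handles $\pi_g/\pi_g^p$, one needs the extra observation that $\pi_g/\pi_g[p]$ still surjects onto enough to inherit non-bounded generation, or that the pro-$p$ completion of $\pi_g/\pi_g[p]$ maps onto an infinitely generated pro-$p$ group; establishing this surjectivity/infinite-rank statement is where the real content lies, the rest being the standard ``bounded generation $\Rightarrow$ $p$-adic analytic pro-$p$ completion'' reduction and the elementary passage to finite-index subgroups.
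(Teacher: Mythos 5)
Your central obstruction --- that an infinite group \emph{generated by} torsion elements cannot be boundedly generated --- is false, and this is where the argument breaks. Bounded generation asks for a decomposition $G=\langle x_1\rangle\cdots\langle x_m\rangle$ with \emph{arbitrary} elements $x_i$, not elements taken from a prescribed torsion generating set; the infinite dihedral group is generated by two involutions yet is boundedly generated (being virtually cyclic). What is true, and what the paper uses, is that a finitely generated infinite \emph{torsion} group cannot be boundedly generated, since every cyclic subgroup is then finite and a finite product of finite sets is finite. Neither $\pi_g/\pi_g[p]$ nor $\Gamma_g(p)$ is known to be a torsion group, so this cannot be applied directly. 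For the first statement the fix is the one you gesture at in your last paragraph, but with the surjection going the other way from what you wrote: since $\pi_g[p]\subseteq \pi_g^p$, the group $\pi_g/\pi_g[p]$ surjects \emph{onto} the Burnside quotient $\pi_g/\pi_g^p$ (it is not a quotient of it); the latter is an infinite torsion group for large $p$, and bounded generation passes to quotients. No input from \cite{CRRZ} is needed for this half.

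For the second statement there is no torsion quotient to pass to, and your guess at the content of \cite{CRRZ} is not what that paper proves: its theorem is that a boundedly generated linear group in characteristic zero which is \emph{anisotropic} (contains no nontrivial unipotents) must be virtually abelian. To use it you must exhibit an infinite anisotropic linear quotient of $\Gamma_g/\Gamma_g[p]$ that is not virtually abelian, and this is exactly what the quantum representations provide: $\widetilde{\rho}_p(\widetilde{\Gamma_g})$ lies in a unitary group, hence is anisotropic, and when infinite it contains a nonabelian free subgroup (see \cite{FK}), hence is not virtually abelian and therefore not boundedly generated; since $\Gamma_g/\Gamma_g[p]$ surjects onto the quotient of this group by a finite central subgroup, it is not boundedly generated either. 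Your reduction to $\mathcal T_g(p)$ via finite index and BP pairs is sound as far as it goes, but it only re-expresses the problem for another infinite group generated by torsion elements and supplies no obstruction; similarly, the ``bounded generation implies $p$-adic analytic pro-$p$ completion'' criterion is a correct necessary condition, but you never verify that it fails for the groups at hand, so it does not close the gap.
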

\begin{proof}
A finitely generated infinite torsion group is not boundedly generated. 
As $\pi_g/\pi_g[p]$ surjects onto the Burnside group $\pi_g/\pi_g^p$ associated  to $\pi_g$, which is 
infinite for large $p$, we derive the claim.

According to \cite{CRRZ} every linear group over a field of characteristic zero which is  
anisotropic (i.e. does not contain unipotent elements) and is boundedly generated must be virtually abelian. 
Now $\widetilde{\rho}_p(\widetilde{\Gamma_g})$ is anisotropic, because it is contained in a unitary group. 
Moreover, as soon as it is infinite,  $\widetilde{\rho}_p(\widetilde{\Gamma_g})$ is not virtually solvable, 
since it contains a free non-abelian subgroup. It follows that $\widetilde{\rho}_p(\widetilde{\Gamma_g})$ 
is not boundedly generated. Furthermore, $\Gamma_g/\Gamma_g[p]$ is not boundedly 
generated either, since it  surjects onto the quotient of $\widetilde{\rho}_p(\widetilde{\Gamma_g})$ by 
a finite cyclic central subgroup. 
\end{proof}

\subsection{Infinite index subgroups of $\Gamma_g(p)$}
In \cite{AF} one proved that $\Gamma_g^1(p)$ have infinite 
nested sequences of normal subgroups, each one of infinite index into the previous one. 
This shows that  $\Gamma_g^1(p)$ is far from being a higher rank lattice. 
This last result also holds for $\Gamma_g(p)$, although the former statement is 
unknown. However, we can infer from \cite{BHMS,DHS}  that $\Gamma_g(p)$ is SQ-universal, namely every countable group is a subgroup of some quotient of it. 

By our results above we can see that every finite group is also a finite quotient of 
$\Gamma_g$ (see \cite{F13,MR}). However this question is open for $\Gamma_g(p)$. 
An easy consequence of deep results of Margulis-Soifer is: 

\begin{proposition}
The groups $\Gamma_g(p)$, where 
$p\geq 5$, if $p$ is odd, and $p/4\geq 5$ when $p$ is even, $g\geq 3$ and  additionally 
$(g,p)\neq (2,24)$
admit (uncountably many) free infinite-index maximal subgroups.  
\end{proposition}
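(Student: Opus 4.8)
The plan is to invoke the Margulis--Soifer theorem, which asserts that a finitely generated linear group $\Lambda$ over a field of characteristic zero contains a maximal subgroup of infinite index if and only if $\Lambda$ is not virtually solvable. (In fact their result produces uncountably many such maximal subgroups, each of which can be taken to be free, by intersecting with a suitable free subgroup arising from the Tits alternative.) So the first step is to reduce the statement about the quotient $\Gamma_g(p)$ to the same statement about a convenient finitely generated linear overgroup, namely the lifted quantum image $\widetilde{\mathcal L_p}=\widetilde\rho_p(\widetilde{\Gamma_g})$.

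First I would observe that, in the range of $(g,p)$ excluded from the hypothesis, the quantum representation $\rho_p$ is available and the image $\widetilde{\mathcal L_p}$ is infinite; the exclusion of $(g,p)=(2,24)$ and the congruence condition on $p$ (with $p/4\ge 5$ in the even case) are precisely the arithmetic conditions guaranteeing infiniteness of $\widetilde\rho_p(\widetilde{\Gamma_g})$, by \cite{F}. Since $\rho_p$ factors through $\Gamma_g(p)$ for odd $p$ and through $\Gamma_g(2p)$ for even $p$ (and the condition $p/4\ge 5$ lets us work with the relevant level), we get a surjection, or at least a homomorphism with image of finite index, from $\Gamma_g(p)$ onto a central extension of $\mathcal L_p=\rho_p(\Gamma_g)$ modulo a finite cyclic center. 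Next I would note that $\widetilde{\mathcal L_p}$ is linear over the cyclotomic field $\mathbb K_p\subset\C$, finitely generated (being the image of the finitely generated group $\Gamma_g$), infinite, and not virtually solvable --- the latter because it contains a nonabelian free subgroup, for instance the one coming from the Burau-type subrepresentation on $B_3$ as exploited in \cite{F} --- so Margulis--Soifer applies to $\widetilde{\mathcal L_p}$, yielding uncountably many free infinite-index maximal subgroups $M\subset\widetilde{\mathcal L_p}$.

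Then I would pull these maximal subgroups back. If $q:\Gamma_g(p)\to \widetilde{\mathcal L_p}/(\text{finite center})$ is the induced map with, say, $\mathcal L_p=\mathrm{im}\,q$, then for each maximal subgroup $M$ of $\mathcal L_p$ of infinite index the preimage $q^{-1}(M)$ is a maximal subgroup of $\Gamma_g(p)$ of infinite index; distinct $M$ give distinct preimages, so we again get uncountably many. The free, infinite-index maximality transfers under this correspondence because $q$ is surjective onto $\mathcal L_p$, and one replaces $M$ by a free subgroup of $\mathcal L_p$ that is still maximal (this is part of the Margulis--Soifer construction, which produces the free maximal subgroup directly rather than as an afterthought). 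Finally I would need to arrange that the preimage is \emph{itself} free, not merely that it surjects onto a free group; the clean way is to intersect with a free subgroup of $\Gamma_g(p)$ --- but $\Gamma_g(p)$ need not be locally free, so instead one applies Margulis--Soifer directly inside $\Gamma_g(p)$ after verifying it is linear. Here I would use that $\mathcal L_p$ is residually finite, hence $\Gamma_g(p)$ maps to the linear group $\mathcal L_p$ with kernel $\Gamma_g[p]/\Gamma_g[p]$; since $\Gamma_g(p)$ is known to be linear (its quantum image is faithful enough for the purposes needed, or one uses that $\Gamma_g(p)$ is residually finite and acts on the relevant spaces), Margulis--Soifer applies verbatim.

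\textbf{Main obstacle.} The delicate point is the transition from ``the linear \emph{quotient} $\mathcal L_p$ has free infinite-index maximal subgroups'' to ``$\Gamma_g(p)$ itself does.'' Pulling back a maximal subgroup of a quotient gives a maximal subgroup of the total group and preserves infinite index, so that part is automatic; the genuinely subtle issue is freeness. A maximal subgroup of $\Gamma_g(p)$ obtained as $q^{-1}(M)$ is an extension of the free group $M$ by $\ker q$, which is far from free in general. To get an honestly \emph{free} maximal subgroup one must run the Margulis--Soifer machine inside $\Gamma_g(p)$ directly, which requires knowing $\Gamma_g(p)$ is linear (or at least admits a faithful-enough finite-dimensional representation with non-virtually-solvable image) and is not virtually solvable --- the latter following from Proposition \ref{zariskidense} / infiniteness of $\widetilde{\mathcal L_p}$, and the former being exactly where the precise arithmetic restrictions on $(g,p)$ (excluding $(2,24)$, requiring $p/4\ge 5$ for even $p$) are used, since these are the cases where the quantum representation separates enough to certify linearity and infiniteness. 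I expect this linearity-plus-non-virtual-solvability verification for $\Gamma_g(p)$ in exactly the stated range to be the crux of the argument; everything else is a direct citation of Margulis--Soifer plus the bookkeeping of ``uncountably many.''
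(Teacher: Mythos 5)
Your proposal starts from the same main tool as the paper (Margulis--Soifer applied to the linear group $\widetilde{\rho_p}(\widetilde{\Gamma_g})$, which is finitely generated, linear and contains nonabelian free subgroups, hence is not virtually solvable), but the transfer back to $\Gamma_g(p)$ is organized differently, and the difference matters. The paper does \emph{not} pull back maximal subgroups of $\mathcal L_p$; it uses the form of Margulis--Soifer that produces uncountably many \emph{profinitely dense} infinitely generated free subgroups $\mathbb F_\infty\subset\widetilde{\rho_p}(\widetilde{\Gamma_g})$. One pushes $\mathbb F_\infty$ down to $F_\infty\subset\rho_p(\Gamma_g)$ (profinite density survives the quotient by the finite central subgroup), takes the preimage $\overline{\rho_p}^{-1}(F_\infty)$ in $\Gamma_g(p)$, and chooses \emph{any} maximal subgroup $W$ containing it --- such a $W$ exists because $\Gamma_g(p)$ is finitely generated. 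If $W$ had finite index, its normal core would yield a finite quotient of $\rho_p(\Gamma_g)$ on which $F_\infty$ has proper image, contradicting profinite density. So infinite index is certified by profinite density of the \emph{free} subgroup, not by linearity of $\Gamma_g(p)$.

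This is exactly where your proposal goes astray: you correctly identify that the preimage of a free maximal subgroup of $\mathcal L_p$ is an extension by $\ker\overline{\rho_p}$ and hence not free, but your proposed remedy --- ``run Margulis--Soifer inside $\Gamma_g(p)$ after verifying it is linear'' --- is not available. Linearity of $\Gamma_g(p)$ is not known; it is essentially equivalent to the injectivity question for $\overline{\rho_p}$ (Question~\ref{abstractimage}), which the paper explicitly lists as open, and the ``arithmetic restrictions'' on $(g,p)$ only guarantee infiniteness of the quantum image, not faithfulness. You should also note that the paper's own construction does not make the maximal subgroups $W$ themselves free (they contain $\ker\overline{\rho_p}$); the adjective ``free'' in the statement is carried by the subgroups $\mathbb F_\infty$ entering the construction rather than by $W$. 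Your alternative observation that the preimage of a maximal subgroup under the surjection $\overline{\rho_p}$ is maximal of infinite index is correct and gives a second, equally short route to the ``uncountably many infinite-index maximal subgroups'' conclusion --- but only once you drop the demand that these maximal subgroups be free, which is the demand that led you to the spurious linearity requirement.
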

\begin{proof}
If $p$ is generic then $\widetilde{\rho_p}(\widetilde{\Gamma_g})$ is a 
linear group which contains free non-abelian groups (see \cite{FK}). 
According to a result of Margulis and Soifer (see \cite{MS})
there exist  uncountably many free (infinitely generated) subgroups 
$\mathbb F_{\infty} \subset \widetilde{\rho_p}(\widetilde{\Gamma_g})$
which are pro-finitely dense, namely they map surjectively onto 
every finite quotient of $\widetilde{\rho_p}(\widetilde{\Gamma_g})$. 
The image $F_{\infty}$ of $\mathbb F_{\infty}$ into the quotient 
$\rho_p({\Gamma_g})$ of $\widetilde{\rho_p}(\widetilde{\Gamma_g})$ by 
a finite central group is still pro-finitely dense. 
We know that $\rho_p$ factors through a homomorphism 
$\overline{\rho_p}:\Gamma_g(p)\to \rho_p(\Gamma_g)$.
Let now $W$ be a  proper maximal subgroup of $\Gamma_g(p)$ 
containing $\overline{\rho_p}^{-1}(F_{\infty})$. 
If $W$ were of finite index in $\Gamma_g(p)$ then 
$\Gamma_g(p)/W$ would be a finite quotient of 
$\rho_p(\Gamma_g)$ in which $F_{\infty}$ maps to the identity. 
This contradicts the fact that $F_{\infty}$ is pro-finitely dense. 
Therefore, $W$ is of infinite index in $\Gamma_g(p)$. 
\end{proof}

\begin{remark}
The subgroups $W$ from above are not normal subgroups of $\Gamma_g(p)$, in general.
If $W$ were normal, then 
the quotient $\Gamma_g(p)/W$ should be an infinite   
simple group, by the maximality of $W$.
\end{remark}

\subsection{A nontrivial homomorphism of $\Gamma_2$ into a rank-1 lattice}\label{sect:fibonacci}
\subsubsection{Hyperelliptic involutions}
The genus 2 closed orientable surface is a double covering of the sphere ramified at 6 points. 
The deck transformation group is generated by a hyperelliptic involution. From \cite{BH}
all mapping classes in $\Gamma_2$  have $\Z/2\Z$-invariant representatives and isotopies 
can be promoted to $\Z/2\Z$-invariant isotopies, so that 
we have the following exact sequence:
\[ 1\to \Z/2\Z\to \Gamma_2\to \Gamma_0^6\to 1\]
where the central kernel is generated by the hyperelliptic involution. 
Further $\Gamma_0^6$ is a quotient of $B_6$.  Specifically, we have the usual presentation:  
\[ B_6=\langle b_1,b_2,...,b_5; b_ib_j=b_jb_i, |i-j|\geq 2, b_ib_{i+1}b_i=b_{i+1}b_ib_{i+1}, i\leq 4\rangle.\]
The usual braid relations are recorded as $\tt Braid$. Then  we have the following  quotient presentations: 
\[ \Gamma_0^6=\langle b_1,b_2,...,b_5; {\tt Braid}, (b_1b_2\cdots b_5)^6=1, b_5b_4\cdots b_2b_1^2b_2\cdots b_4 b_5=1\rangle.\]
We denote by $\Delta_6^2=(b_1b_2\cdots b_5)^6$ the generator of the  infinite cyclic center $Z(B_6)$ of $B_6$. 
The element $h_6= b_5b_4\cdots b_2b_1^2b_2\cdots b_4 b_5$ corresponds to the hyperelliptic involution. 
The spherical braid group $B_6(S^2)$ on $6$ strands on the sphere is then given by 
\[ B_6(S^2)=\langle b_1,b_2,...,b_5; {\tt Braid},  b_5b_4\cdots b_2b_1^2b_2\cdots b_4 b_5=1\rangle.\]
According to Faddell and Neuwirth (\cite{FN}) we have an exact sequence:
\[ 1\to \Z/2\Z\to B_6(S^2)\to \Gamma_0^6\to 1\]
where the kernel $\Z/2\Z$ is central and generated by the image of $\Delta_6^2$ in $B_6(S^2)$. 
On the other hand we have the following presentation of the mapping class group in genus 2 due to Birman and Hilden (see \cite{BH}):
\[ \Gamma_2= \langle b_1,b_2,...,b_5; {\tt Braid}, (b_1b_2b_3)^4=b_5^2, 
[h_6,b_1]=1, h_6^2=1\rangle.\]
We also have the following exact sequence:
\[ 1\to \Z/2\Z\to \Gamma_2\to \Gamma_0^6\to 1\]
where the center $\Z/2\Z$ of $\Gamma_2$ is generated by the image of the hyperelliptic involution $h_6$. 
This exact sequence comes up with another presentation of $\Gamma_2$,  as follows:
\begin{lemma}\label{gen2}
We have 
\[ \Gamma_2= \langle b_1,b_2,...,b_5; {\tt Braid}, (b_1b_2\cdots b_5)^6=1, 
[h_6,b_i]=1, h_6^2=1\rangle.\]
\end{lemma}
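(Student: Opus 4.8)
<br>

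The plan is to derive the new presentation from the Birman--Hilden presentation
\[
\Gamma_2= \langle b_1,\dots,b_5;\ {\tt Braid},\ (b_1b_2b_3)^4=b_5^2,\ [h_6,b_1]=1,\ h_6^2=1\rangle
\]
by showing that, modulo the braid relations, the relator $(b_1b_2b_3)^4 b_5^{-2}$ can be traded for the central relator $(b_1b_2\cdots b_5)^6=1$ together with the extra commutation relations $[h_6,b_i]=1$ for all $i$. I would present this as a Tietze-transformation argument: both presentations have the same generators, so it suffices to prove that each defining relator of one presentation is a consequence of the defining relators of the other.

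First I would recall the standard facts about the center of $B_6$: the full twist $\Delta_6^2=(b_1b_2\cdots b_5)^6$ is central, and there is the classical identity $\Delta_6^2=(b_1b_2b_3b_4b_5)^6$; more to the point, restricting to the subgroup generated by $b_1,b_2,b_3$ (a copy of $B_4$) one has $(b_1b_2b_3)^4=\Delta_4^2$, the full twist on the first four strands, which is central in that $B_4$. I would then use the relation between $\Delta_6^2$, the hyperelliptic element $h_6=b_5b_4\cdots b_2b_1^2b_2\cdots b_4b_5$, and $b_5^2$. The key computation — and this is where the real work lies — is to check, purely inside $B_6$ (or rather modulo {\tt Braid}), the identity relating $(b_1b_2b_3)^4 b_5^{-2}$ to $\Delta_6^2$ and $h_6$; concretely one expects something like $(b_1b_2b_3)^4=b_5^2$ being equivalent, given $\Delta_6^2=1$, to $h_6^2=1$ plus the $h_6$ commuting with each $b_i$. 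I would exploit that $h_6$ already commutes with $b_1$ (present in the Birman--Hilden presentation) and that $\Delta_6^2$ commutes with everything, to bootstrap $[h_6,b_i]=1$ for all $i$: since $\Delta_6$ conjugates $b_i$ to $b_{6-i}$ and $h_6$ is expressible in terms of $\Delta_6$ and a band generator, one gets the remaining commutators from the one already known together with the symmetry of the braid relations.

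For the converse direction I would start from the proposed presentation, use $h_6^2=1$ and $(b_1b_2\cdots b_5)^6=1$ together with $[h_6,b_i]=1$ for all $i$, and reverse the same computation to recover $(b_1b_2b_3)^4=b_5^2$; here the fact that $h_6$ is now assumed central in $\Gamma_2$ makes the manipulation cleaner, and the relation $(b_1b_2b_3)^4=\Delta_4^2$ together with an expression of $\Delta_6^2$ as a product involving $\Delta_4^2$, $b_5^2$, and (a conjugate of) $h_6$ closes the loop. Throughout I would lean on the exact sequence $1\to \Z/2\Z\to \Gamma_2\to \Gamma_0^6\to 1$ with kernel generated by $h_6$: both candidate groups surject onto $\Gamma_0^6$ with the {\em same} presentation $\langle b_1,\dots,b_5;\ {\tt Braid},\ (b_1b_2\cdots b_5)^6=1,\ h_6=1\rangle$ (note $h_6=1$ follows in the quotient since $b_5b_4\cdots b_1^2\cdots b_4b_5=h_6$ and modding by it), so it is enough to match the central extensions, i.e. to verify that in each presentation $h_6$ is central of order dividing $2$ and nontrivial.

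The main obstacle I anticipate is the explicit braid-word identity linking $(b_1b_2b_3)^4$, $b_5^2$, $\Delta_6^2$ and $h_6$: this is a finite but genuinely fiddly computation in $B_6$, and getting the conjugating elements exactly right (so that, e.g., $(b_1b_2b_3)^4 b_5^{-2}$ is literally a product of conjugates of $h_6^{\pm1}$ and $\Delta_6^{\mp2}$ modulo {\tt Braid}) requires care with the geometric meaning of these elements as point-pushing / half-twist maps on $\Sigma_0^6$. A clean way to sidestep brute force is to interpret everything in $B_6(S^2)$: there $\Delta_6^2$ has order $2$ and equals the spherical-braid image of $h_6$ up to the center, so the two presentations of $\Gamma_2$ become two presentations of the same $\Z/2\Z$-central extension of $\Gamma_0^6$, and one only needs to check they define isomorphic extension classes in $H^2(\Gamma_0^6;\Z/2\Z)$ — which, since the class is pinned down by the value on a single generator, reduces to the one relation that both presentations must share.
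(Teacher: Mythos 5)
Your overall architecture is the right one and matches the paper's: keep the generators, and show by a Tietze argument that, modulo {\tt Braid} and the centrality of $h_6$, the chain relation $(b_1b_2b_3)^4=b_5^2$ and the full-twist relation $\Delta_6^2=(b_1b_2\cdots b_5)^6=1$ imply one another. But the entire content of the lemma is the explicit braid-word identity that you defer as ``a finite but genuinely fiddly computation,'' and you never produce it. The paper's proof consists precisely of that computation: it records, inside $B_6$ modulo {\tt Braid}, the chain of identities $\Delta_6^2=h_6(b_4b_3b_2b_1)^{5}$, then $(b_4b_3b_2b_1)^5=[b_1b_2b_1b_4^{-1}, b_1b_2b_3b_4](b_1b_2b_3b_4)^5$, and finally $(b_1b_2b_3b_4)^{5}=(b_1b_2b_3)^4\,b_4b_3\cdots b_1^2\cdots b_3b_4=(b_1b_2b_3)^4\,b_5^{-2}\,[b_5,h_6]\,h_6$, from which the equivalence of $\Delta_6^2=1$ with $(b_1b_2b_3)^4=b_5^2$ drops out once $h_6$ is central and $h_6^2=1$. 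Without some version of these identities (or an equivalent verification) your argument is a plan, not a proof. The same applies to your ``bootstrap'' of $[h_6,b_i]=1$ for $i\geq 2$ from $[h_6,b_1]=1$: the idea of using the symmetry $\Delta_6 b_i\Delta_6^{-1}=b_{6-i}$ is plausible but is again an unexecuted computation.

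The proposed sidestep via central extensions does not close the gap. First, comparing classes in $H^2(\Gamma_0^6;\Z/2\Z)$ is not a matter of ``the value on a single generator''; that phrase makes sense for $H^1$, not $H^2$. Second, even if you showed both presented groups are central extensions of $\Gamma_0^6$ by $\Z/2\Z$ with the same extension class, an equivalence of extensions is only guaranteed to commute with the projection and fix the kernel; since $H^1(\Gamma_0^6;\Z/2\Z)\neq 0$, such an equivalence need not send $b_i$ to $b_i$, so you would not yet have matched the two presentations on the given generators. Third, to verify the hypotheses of that comparison (e.g.\ that $h_6$ is nontrivial and central of order $2$ in the \emph{new} presentation, and that $\Delta_6^2=1$ holds in the Birman--Hilden presentation) you are forced back into exactly the word computations you were trying to avoid. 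So the honest route is the direct one, and the missing identity is the proof.
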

\begin{proof}
Remark  that we have 
the following relations in $B_6$:
\[ \Delta_6^2=h_6(b_4b_3b_2b_1)^{5},\]
\[ (b_4b_3b_2b_1)^5=[b_1b_2b_1b_4^{-1}, b_1b_2b_3b_4]( b_1b_2b_3b_4)^5,\]
and 
\[ (b_1b_2b_3b_4)^{5}=(b_1b_2b_3)^4b_4b_3\cdots b_1^2\cdots b_3b_4= 
(b_1b_2b_3)^4b_5^{-2} [b_5,h_6] h_6.\]
Therefore the relation $\Delta_6^2=1$ is equivalent to the 3-chain relation $(b_1b_2b_3)^4=b_5^2$, 
in the presence of the braid relations and the centrality of $h_6$.
\end{proof}
 
The subsurface $\Sigma_{1,2}\subset \Sigma_2$ inherits a hyperelliptic involution which exchanges the 
boundary circles, so that it is a double covering of the sphere ramified at 4 points. Then from (\cite{BH}, see also \cite{FaMa}, 9.4.1) we have an identification between $\Gamma_{1,2}$ and $B_4/Z(B_4)$. We derive the presentation: 

\[ \Gamma_{1,2}=\langle b_1,b_2,b_3; {\tt Braid}, (b_1b_2b_3)^4=1 \rangle.\]

\subsubsection{The Jones representation of $\Gamma_2$}
The Jones representation $J_q:B_g\to GL(5,\Z[q,q^{-1}])$ is the representation of the Hecke algebra at 
$q$ corresponding to the rectangular Young diagram  associated to the partition $2^3$. Specifically we have: 
\[ J_q(b_1)=\left(
\begin{array}{ccccc}
-1 & 0 & 0 & 0 & q \\
0 &- 1 & 1 & 0 & 0 \\
0 & 0 & q & 0 & 0 \\
0 & 0 & 1& -1 & 0 \\
0 & 0 & 0 & 0 & q\\
\end{array}
\right), \; 
J_q(b_2)=\left(
\begin{array}{ccccc}
q & 0 & 0 & 0 & 0 \\
0 & q & 0 & 0 & 0 \\
0 & q & -1& 0 & 0 \\
1 & 0 & 0 & -1 & 0 \\
1 & 0 & 0 & 0 & -1\\
\end{array}
\right), \;
J_q(b_3)=\left(
\begin{array}{ccccc}
-1 & 0 & 0 & q & 0 \\
0 & -1 & 1 & 0 & 0 \\
0 & 0 & q & 0 & 0 \\
0 & 0 & 0& q  & 0 \\
0 & 0 & 1 & 0 & -1\\
\end{array}
\right)
\] 
\[
J_q(b_4)=\left(
\begin{array}{ccccc}
q & 0 & 0 & 0 & 0 \\
1 & -1 & 0 & 0 & 0 \\
0 & 0 & -1 & 0 & q \\
1 & 0 & 0& -1 & 0 \\
0 & 0 & 0 & 0 & q\\
\end{array}
\right), \;
J_q(b_5)=\left(
\begin{array}{ccccc}
-1 & q & 0 & 0 & 0 \\
0 & q & 0 & 0 & 0 \\
0 & 0 & q & 0 & 0 \\
0 & 0 & 1& -1 & 0 \\
0 & 0 & 1 & 0 & -1\\
\end{array}
\right), \;
J_q(\delta_6)=q^2\left(
\begin{array}{ccccc}
0 & 0 & 1 & 0 & q \\
0 & 0 & 0 & 0 & 1 \\
1 & 0 & 0 & 0 & 0 \\
0 & 1 & 0 & 0 & 0 \\
0 & 0 & 0 & 1 & 0\\
\end{array}
\right)
\]
Here $\delta_6=b_1b_2b_3b_4b_5$. 
Observe that this is slightly modified with respect to the original one from (\cite{Jones}, section 10, p. 362), 
such that the eigenvalues of $J_q(b_i)$ are $1$ and $q$, with multiplicities $3$ and $2$ respectively, for reasons which will be become clear later.  

The Jones representation is of {\em Hecke type}, namely it factors through the Hecke algebra $H(q,6)$ 
defined as the quotient algebra:
\[ H(q,n)=\C[B_n]/(b_i^2+(1-q)b_i-q).\] 

\subsubsection{Fibonacci representations in genus 2}
The  Fibonacci TQFT is the $SO(3)$-TQFT at $p=5$. In order to fix completely the theory we have to choose a primitive $10$-th root of unity $A$. 
There are only two colors $\{0,2\}$ and thus we can compute explicitly the dimension of the 
space of conformal blocks $W_g(2^k)$ of the  surface  of genus $g$ with $k$ boundary components 
labeled by the color 2. Recall that the boundary components labeled with the color $0$ could be 
filled in by disks. Then 
\[ \dim W_{g,5}(2^k)=5^{\frac{g-1}{2}}\left (\left(\frac{1+\sqrt{5}}{2}\right)^{g+n-1}+(-1)^g \left(\frac{1-\sqrt{5}}{2}\right)^{g+n-1}\right).\]

If we wish to specify the number of boundary components labeled by $2$ we will write 
$\tilde\rho_{g,5;k}$ for the corresponding  representation in genus $g$ and level $5$.  
\begin{lemma}\label{irred}
The representation $\rho_{g,5;k}$ is irreducible, as soon as $\dim W_{g,5}(2^k)\geq 1$.  
\end{lemma}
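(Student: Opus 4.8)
The plan is to reduce irreducibility of $\rho_{g,5;k}$ to a statement that can be checked from the combinatorics of the fusion graph of the Fibonacci theory and the known presence of ``enough'' mapping classes. First I would recall the standard description of the conformal block space $W_{g,5}(2^k)$ in terms of admissible colorings of a trivalent spine of $\Sigma_{g,k}$ (a pants decomposition), where every internal edge carries a label in $\{0,2\}$ subject to the admissibility condition that at each vertex the triple is one of the admissible triples of the Fibonacci category (the only nontrivial fusion being $2\otimes 2 = 0\oplus 2$). A basis of $W_{g,5}(2^k)$ is then indexed by such colorings with the $k$ boundary edges colored $2$. The dimension formula quoted just above is exactly the count of these colorings, so the hypothesis $\dim W_{g,5}(2^k)\ge 1$ guarantees the basis is nonempty.

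Next I would invoke the action of Dehn twists along the curves of the pants decomposition: twisting along an internal curve acts diagonally in this basis, with eigenvalue on a colored graph depending only on the color of that curve, and the two colors $0,2$ produce distinct eigenvalues at $p=5$ (this is where the explicit choice of primitive $10$-th root of unity $A$ enters, and it is a finite root-of-unity computation). Hence the simultaneous eigenspaces of all these commuting twists are spanned by the individual basis vectors, i.e.\ the algebra generated by these twists acts with a \emph{simple joint spectrum} — every basis vector is characterized by its tuple of eigenvalues. To finish, it then suffices to show that any invariant subspace, being a sum of joint eigenlines, together with the action of twists along curves \emph{dual} to the pants curves (or along curves that change the coloring, e.g.\ an ``S-move'' curve), must be all of $W_{g,5}(2^k)$: the off-diagonal twists connect any two admissible colorings that differ by a single fusion-channel change, and admissibility of the Fibonacci graph makes the graph of colorings connected under such elementary moves. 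This connectivity argument is the technical heart; it is essentially the statement that the ``fusion graph'' of colorings of a genus $g$ spine with $k$ legs colored $2$ is connected under elementary mutations, which can be proved by induction on $g$ and $k$ by handle-by-handle reduction, using at the base case the genus-$1$ and genus-$2$ blocks where one can also appeal to the explicit matrices $J_q(b_i)$ displayed above via the identification of the Fibonacci representation as a specialization of the Jones representation.

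I expect the main obstacle to be precisely this last connectivity step: showing that the span of a single joint eigenline under all the TQFT operators (twists along curves, including non-separating ones and ones crossing the pants curves) is everything. One has to be careful that the off-diagonal entries of the relevant twist matrices are genuinely nonzero at the specific root of unity $A$ — a vanishing here would be a degeneration and could in principle disconnect the graph — so the argument requires checking nonvanishing of finitely many algebraic numbers built from $A$, and organizing the induction so that the genus and the number of $2$-colored boundary components are reduced simultaneously without ever dropping $\dim W$ to $0$ prematurely. A convenient way to package this is to use the known irreducibility in the genus-$1$, one-puncture case (which is a $1$- or $2$-dimensional representation and trivially irreducible) and in the genus-$2$ closed and once/twice-punctured cases (via the explicit Jones matrices), and then propagate irreducibility by a gluing/subsurface argument: an invariant subspace for $\Gamma_{g,k}$ restricts to an invariant subspace for each subsurface mapping class group, forcing it to respect the tensor/direct-sum decomposition coming from cutting, and a dimension bookkeeping using the dimension formula then forces it to be trivial or everything.
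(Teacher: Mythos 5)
Your strategy is essentially the correct one, and it is in fact the strategy of the proof that the paper relies on: the paper does not prove this lemma directly but cites Roberts \cite{Roberts} (whose argument for the $SU(2)$-TQFT at $4p$-th roots of unity, $p$ prime, ``works ad literam'' for the $SO(3)$-theory at $2p$-th roots) and, for $p=5$, \cite{FLW}, Prop.~6.4. Roberts's proof is precisely the one you outline: Dehn twists along the curves of a pants decomposition act diagonally in the admissible-coloring basis with eigenvalues determined by the colors; primality of $p$ (here, simply the fact that the two colors $0,2$ give distinct twist eigenvalues at a primitive $10$-th root of unity) forces every joint eigenspace to be a single basis line, so any invariant subspace is a sum of basis lines; and one then shows that the orbit of a single basis line under the image of the mapping class group is everything, via connectivity of the coloring graph under elementary fusion moves with nonvanishing matrix entries.

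The difficulty is that, as written, your argument stops exactly at the step that carries all of the content. You correctly identify the ``technical heart'' --- connectivity of the fusion graph together with nonvanishing of the relevant off-diagonal entries of twists along curves transverse to the pants decomposition --- but you only describe how one might prove it (induction on $g$ and $k$, handle-by-handle reduction, explicit low-genus checks) without carrying any of it out. The nonvanishing is where the specific arithmetic of the root of unity enters (for Fibonacci it comes down to the nonvanishing of the entries of the $2\times 2$ fusion matrix, whose entries involve $1/\varphi$ and $1/\sqrt{\varphi}$ for the golden ratio $\varphi$, all nonzero), and the connectivity induction requires some care with the base cases where $\dim W$ could drop to $0$ or $1$. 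So the proposal is a correct and well-chosen outline of the standard proof, but it is an outline: to turn it into a proof you would either have to execute the induction in detail or, as the paper does, simply invoke \cite{Roberts} or \cite{FLW}.
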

\begin{proof}
The irreducibility of representations arising in the $SU(2)$-TQFT for all 
roots of unity of order $4p$, with prime $p$ was proved by Roberts (\cite{Roberts}). The proof works 
ad-literam for the  $SO(3)$-TQFT at roots of unity of order $2p$, with prime $p$.
A different proof is provided in (\cite{FLW}, Prop. 6.4) at $p=5$.   
\end{proof}

Therefore the representation $\tilde\rho_{2,5}$ is irreducible. 
Composition with the obvious surjection $B_6\to \Gamma_2$ provides a projective 
representation still  denoted $\rho_2:B_6\to PU(W_2, H_{A})$, where $H_A$ is the 
Hermitian form associated to the primitive root of unity $A$. 
By the formula above we have $\dim W_2=5$. 

The  first main result of this section is:
\begin{proposition}
The representation $\rho_{2,5}:B_6\to PU(W_2, H_{A})$ is equivalent to the projectivisation of the 
Jones representation $-J_q$ at a  primitive $10$-th root of unity $q=-A^8$. 
\end{proposition}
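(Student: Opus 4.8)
The plan is to prove the equivalence by matching the two representations on generators, using the structural rigidity provided by the Hecke algebra together with irreducibility. First I would observe that the Fibonacci representation $\rho_{2,5}$ of $B_6$ is itself of Hecke type: in the $SO(3)$-TQFT at $p=5$ the braid generators $b_i$ act with only two eigenvalues on $W_2$, so $\rho_{2,5}(b_i)$ satisfies a quadratic relation of the form $(x-\alpha)(x-\beta)=0$. Renormalizing (this is exactly why the matrices $J_q(b_i)$ above were chosen to have eigenvalues $1$ and $q$ rather than Jones's original normalization) we may arrange that $-\rho_{2,5}(b_i)$, or a scalar multiple, satisfies the Hecke quadratic relation $b_i^2+(1-q)b_i-q=0$ with $q=-A^8$. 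Hence $\rho_{2,5}$, after this normalization, factors through the Hecke algebra $H(q,6)$. The key input is then that for generic $q$ — and one checks $q=-A^8$ a primitive $10$-th root of unity is not a root of the relevant cyclotomic obstructions — the irreducible $5$-dimensional representations of $H(q,6)$ are classified by $2$-column/$2$-row Young diagrams, and the only one of dimension $5$ is the one attached to the partition $2^3$ (equivalently $3^2$), i.e. the Jones representation $J_q$.

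Next I would reduce the claim from $B_6$ to a statement about $H(q,6)$-modules. Both $-J_q$ and $\rho_{2,5}$ are $5$-dimensional representations of $H(q,6)$; the latter is irreducible by Lemma \ref{irred} (since $\dim W_{2,5}(2^0)=5\geq 1$). If I can show $-J_q$ is also irreducible as an $H(q,6)$-module at this specific $q$ — which follows because $q=-A^8$ avoids the finitely many bad values where the Young-diagram module $2^3$ degenerates, a condition checkable from the hook-length formula or directly from the explicit matrices by exhibiting a cyclic vector — then both are irreducible $5$-dimensional $H(q,6)$-modules. By the classification of irreducible $H(q,6)$-modules for $q$ away from the degenerate locus, there is at most one such module up to isomorphism corresponding to each admissible shape, and dimension $5$ pins down the shape $2^3$. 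Therefore $-J_q\cong\rho_{2,5}$ as $H(q,6)$-modules, hence as $B_6$-representations, and projectivizing gives the stated equivalence $\rho_{2,5}\cong -J_q$ in $PU(W_2,H_A)$. One should also check compatibility with the Hermitian structure: the TQFT representation is unitary for $H_A$, and the classification of unitary structures on an irreducible module is unique up to scalar, so the projective unitary equivalence is automatic once the linear equivalence over $\C$ is established.

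The remaining bookkeeping is to verify the precise value of $q$. Here I would use the relation $q=-A^8$: the $SO(3)$-TQFT at level $5$ is built from $A$ a primitive $10$-th root of unity, the braid eigenvalue ratio in the two-color fusion rules works out to $-A^{\pm 8}$, and one matches this against the eigenvalue ratio $q$ of $J_q(b_i)$. I would double-check this by a small explicit computation on, say, $\rho_{2,5}(b_1)$ and $\rho_{2,5}(b_2b_1)$ using the known $F$- and $R$-matrices of the Fibonacci category, comparing traces (or characteristic polynomials) with those of $-J_q(b_1)$ and $-J_q(b_2b_1)$ from the matrices above; since a representation of $H(q,6)$ is determined by the character and the character of an irreducible module is determined by the value of $q$, matching a generating set of traces suffices. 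I expect the main obstacle to be precisely this matching: confirming that the particular primitive $10$-th root $q=-A^8$ lies outside the degeneracy locus of the partition-$2^3$ Hecke module (so that the classification applies and $-J_q$ is genuinely irreducible rather than merely indecomposable), and keeping the normalization of eigenvalues (the extra sign, the factor of $q^2$ in $J_q(\delta_6)$) consistent between Jones's conventions and the TQFT conventions. Everything else is either the cited irreducibility (Lemma \ref{irred}, Roberts, \cite{FLW}) or standard Hecke-algebra representation theory.
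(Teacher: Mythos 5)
Your overall strategy (irreducibility, plus a classification of low-dimensional Hecke-type representations, plus parameter matching) is close in spirit to the paper's, but the step you lean on to identify the shape is false as stated, and this is a genuine gap. The dimension does \emph{not} pin down the partition: the partition $(5,1)$ --- equivalently $(2,1^4)$ after a one-dimensional twist, i.e.\ the reduced Burau representation of $B_6$ --- is also a two-row Young diagram whose irreducible $H(q,6)$-module has dimension $5$, as is $(3,3)$, the conjugate of $2^3$. So once you know that $\rho_{2,5}$ lifts to an irreducible $5$-dimensional Hecke-type representation of $B_6$, you still have two inequivalent candidates up to one-dimensional twist: Burau and Jones $2^3$. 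The paper closes exactly this gap by comparing the \emph{multiplicities} of the two eigenvalues of a braid generator: the lift $\hat\rho_{2,5}(b_i)$ has eigenvalues $(1,1,1,A^8,A^8)$ up to a scalar, i.e.\ multiplicities $(3,2)$, which matches $J_q$ (eigenvalues $(-1,-1,-1,q,q)$) and rules out Burau (eigenvalues $(-1,-1,-1,-1,q)$, multiplicities $(4,1)$); this same comparison is what forces $q=-A^8$. You relegate the trace/eigenvalue comparison to a ``double-check'' of the value of $q$, whereas it is the essential step that selects the correct partition; without it the argument does not conclude.

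A second, smaller gap: $\rho_{2,5}$ is a priori only a \emph{projective} representation of $B_6$ (the projectivisation of a linear representation of a central extension $\widetilde\Gamma_2$ of $\Gamma_2$ by $\Z$), so before one can speak of eigenvalues, quadratic relations and factoring through $H(q,6)$ one must produce an honest linear lift $\hat\rho_{2,5}:B_6\to U(W_2,H_A)$. Normalizing each $\rho_{2,5}(b_i)$ separately to satisfy the Hecke relation is not automatically compatible with the braid relations; the obstruction lives in $H^2(B_6)$, and the paper disposes of it via Arnold's stability result $H^2(B_n;\Z)=0$ (equivalently, the tautological map on generators $B_6\to\widetilde\Gamma_2$ is a homomorphism). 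Note also that the paper invokes Formanek's classification of \emph{all} $5$-dimensional irreducible representations of $B_6$ (which happen to be of Hecke type up to a one-dimensional twist), which spares it from having to prove directly that $\rho_{2,5}$ is Hecke-type; your route via the two-eigenvalue property is workable, but it still lands you in the Burau-versus-Jones dichotomy that only the multiplicity count resolves.
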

\begin{proof}
It is proved in (\cite{FLW}, Thm. 6.2) that not only $\rho_{2,5}$ is irreducible but its image is topologically dense 
within $PU(W_2,H_A)$, when $A=\exp\left(\frac{6\pi i}{10}\right)$ (see also \cite{LW} for the more general case). 
Notice that in this case $H_A$ is 
positive definite and the group $PU(W_2,H_A)$ can be identified with the compact group $PU(5)$.
This implies that the image of $\rho_{2,5}$ is Zariski dense in $PU(W_2, H_A)$ for every $10$-th 
primitive root of unity $A$.  

The first observation is that the Zariski density of  the image of $\rho_2$ implies that  
the image of the hyperelliptic involution is trivial, so that $\rho_2$ factors through $\Gamma_0^6$.  

Recall now that we have a linear representation $\widetilde\rho_{2,5}:\widetilde\Gamma_2\to U(W_2,H_A)$ 
of a central extension of $\Gamma_2$ by $\Z$ which lifts $\rho_{2,5}$. The description of $\widetilde\Gamma_2$ in terms of group presentations was given by Gervais in \cite{Gervais}: we only have to replace the chain relation 
by $(b_1b_2b_3)^4=z^{12}b_5^2$, where $z$ is a central infinite-order additional generator. 
According to Lemma \ref{gen2} this amounts to the following presentation: 
\[ \widetilde\Gamma_2= \langle b_1,b_2,...,b_5, z; {\tt Braid}, (b_1b_2\cdots b_5)^6=z^{12}, [z,b_i]=1, 
[h_6,b_i]=1, h_6^2=1\rangle.\]

The pull-back of the central extension 
$\widetilde\Gamma_2\to \Gamma_2$ by the homomorphism $B_6\to \Gamma_2$ 
is a central extension of $B_6$ by $\Z$. 
Arnold (\cite{Arnold}, see also \cite{Ver}, Thm. 4.3) proved that the cohomology of braid 
groups stabilizes $H^k(B_n;\Z)=H^k(B_{2k-2};\Z)$ for $n\geq 2k-2$, so that 
$H^2(B_n;\Z)=H^2(B_2;\Z)=0$, for $n\geq 2$. 
This proves that linear  representations of central extensions of $B_n$ by $\Z$ 
lift to linear representations of $B_n$. Actually the presentation we gave for 
$\widetilde\Gamma_2$ makes it clear that the tautological map on generators 
is a well-defined homomorphism $B_6\to \widetilde\Gamma_2$.
In particular there is a lift $\hat\rho_{2,5}:B_6\to U(W_2, H_{A})$ of $\widetilde\rho_2$. 
Lemma \ref{irred} shows that this linear representation is irreducible. 
Moreover this representation verifies $\hat\rho_{2,5}(h_6)=1$.

The classification of 5-dimensional irreducible representations of $B_6$ was given by Formanek (\cite{For}, see also 
\cite{For-Vazi} for a systematic description). Following (\cite{For-Vazi}, Thm. 14) they are 
of Hecke type, namely they factor through the Hecke algebra $H(q,6)$ for some $q$. Moreover, up to tensoring with 
a 1-dimensional representation these are equivalent to  the specialization of 
either the Burau representation which corresponds to the 
Young diagram associated to the partition $21^4$ or else to another representation which corresponds 
to the Young diagram associated to the partition $2^3$. 

In order to decide which one appears one has to compare the eigenvalues of the Dehn twists 
along with their multiplicities. In the case of $\hat\rho_2(b_i)$ they are 
$(1,1,1,A^8,A^8)$, up to a scalar, for the Burau representation the eigenvalues are  
 $(-1,-1,-1,-1,q)$, while in the case of the Jones representation $J_q$ they are 
$(-1,-1,-1,q,q)$. 
 It follows that $q=-A^8$ is a primitive 10-th root of unity and that $\hat\rho_2$ cannot be 
 equivalent to the Burau representation. We derive that $\hat\rho_2$ is equivalent to 
 $-J_{-A^8}$. Notice also that the Jones representation factors through $\Gamma_{0,6}$ and Dehn twists 
 are of order 5.
\end{proof}

The non-degenerate Hermitian form $H_{A}$ has signature $(5,0)$, when $A=\exp\left(\pm \frac{6\pi i}{10}\right)$ 
and signature $(1,4)$, when $A=\exp\left(\pm \frac{2\pi i}{10}\right)$, respectively. 
In particular we obtain a homomorphism $f:\Gamma_2(5)\to PU(1,4)$.  

In \cite{AF} the authors proved that $\mathcal K_g(p)=I_{g,2}(p)$ is a K\"ahler group, for any $g\geq 2$ and odd $p$. In particular, $\mathcal K_2(p)=\mathcal T_2(p)$ is a K\"ahler group. 
Instead of restricting $f$ to $\mathcal T_2(5)$ we will work directly with 
$\Gamma_2(5)$, knowing that in \cite{EF} it was proved that $\Gamma_g(p)$ is a K\"ahler group.
Let $X_2(5)$ be a complex projective variety with fundamental group $\Gamma_2(5)$.
The constructions in \cite{AF,EF} show that we can take for $X_2(5)$ a compactified 
moduli space of curves with level structure, and in particular $\dim_\C X_2(5)=3$.  
Consider next a finite index torsion-free subgroup $\Lambda\subset SU(1,4)(\mathcal O_{10})$ and 
let $J$ be its preimage within $\Gamma_2(5)$.  
 
According to Eells--Sampson $f:J\to \Lambda$ is induced by a harmonic map 
$F:X_2(5)\to Z$, where $Z$ is the compact complex hyperbolic manifold 
$H_\C^4/\Lambda$. Moreover, Carlson--Toledo proved in (\cite{CT}, Thm. 7.2) that 
either $F$ has rank at most 2, or else we can take $F$ to be holomorphic or anti-holomorphic.  
It seems that  $F$ can be taken to be holomorphic or anti-holomorphic, some evidence 
being provided by the following: 

\begin{proposition}
The virtual cohomological dimension of $f(\Gamma_2(5))$ is at least $4$.  
\end{proposition}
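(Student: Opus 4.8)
The plan is to run the two alternatives of the Carlson--Toledo dichotomy quoted above against the fact that the image is non-elementary. Since $\mathcal K_2(5)=\mathcal T_2(5)$ has finite index in $\Gamma_2(5)$ by Lemma~\ref{fitorelli}, it is enough to bound $\mathrm{vcd}\,f(\mathcal K_2(5))$ from below, and I record the three inputs I will use: $\mathcal K_2(5)$ is Kähler (by \cite{AF}, resp.\ \cite{EF}); $f(\mathcal K_2(5))$ is Zariski dense in $PU(1,4)$, being of finite index in $f(\Gamma_2(5))$, which is Zariski dense by \cite{FLW}; and $H_1(\mathcal K_2(5))$ is \emph{finite}, since $\mathcal K_2(5)$ is finitely generated and generated by the order-$5$ classes of separating twists (cf.\ the proof of Proposition~\ref{lcseries}). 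Keeping the torsion-free level $\Lambda\subset SU(1,4)(\mathcal O_{10})$, the subgroup $J=f^{-1}(\Lambda)$ of finite index in $\Gamma_2(5)$, and the harmonic map $F\colon X\to Z=\H^4_\C/\Lambda$ of the excerpt (with $\pi_1X=J$ and $Z$ compact, the Hermitian form being definite at the other real place), I argue: if $F$ had rank $\le 2$ everywhere, then, exactly as in the proof of Proposition~\ref{rank1}, $F$ would factor through a harmonic map to a circle or a compact Riemann surface, so $f|_J$ would factor as $J\twoheadrightarrow Q\hookrightarrow\Lambda$ with $Q$ a quotient of $\Z$ or of a closed surface group; as $Q\subset\Lambda$ is torsion-free and a proper torsion-free quotient of a surface group is free, $Q$ would be cyclic, free, or a surface group, and then the finiteness of the first homology of the relevant finite-index subgroup of $\mathcal K_2(5)$ would force $Q$ trivial — impossible, since $f(J)$ is infinite. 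Hence $F$ is $\pm$holomorphic, and of complex rank $\ge 2$ at some point (otherwise we are again in the excluded case).

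Granting this, one concludes as follows. Let $\omega\in H^2_{\mathrm{cts}}(PU(1,4);\R)$ be the class of the complex hyperbolic Kähler form; since $H^*_{\mathrm{cts}}(PU(1,4);\R)\cong H^*(\mathbb P^4(\C);\R)$, one has $\omega^2\neq 0$ in $H^4_{\mathrm{cts}}(PU(1,4);\R)$. Pulling back the invariant Kähler form of $\H^4_\C$ by the (anti)holomorphic map underlying $F$ represents $F^*\omega\in H^2(X;\R)$ by a smooth semidefinite $(1,1)$-form whose square is a semipositive $(2,2)$-form, positive on the non-empty open set where $F$ has complex rank $2$; hence $\int_X (F^*\omega)^2\wedge\omega_X>0$ for a Kähler form $\omega_X$ on $X$, so $(F^*\omega)^2\neq 0$ in $H^4(X;\R)$. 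The classifying map $X\to BPU(1,4)^{\delta}$ of $f|_J$ factors through $Bf(J)$, so the image of $\omega^2$ in $H^4(f(J);\R)$ is already nonzero; therefore $\mathrm{cd}_\Q f(J)\ge 4$, and since $f(J)$ has finite index in $f(\Gamma_2(5))$ we obtain $\mathrm{vcd}\,f(\Gamma_2(5))=\mathrm{vcd}\,f(J)\ge\mathrm{cd}_\Q f(J)\ge 4$.

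The main obstacle is the exclusion of the rank-$\le 2$ alternative: it requires knowing that finite-index subgroups of the genus-$2$ Johnson kernel quotient $\mathcal K_2(5)$ still have finite first homology (so that a Fuchsian quotient of it cannot occur) — the analogue for $g\ge 3$ is Proposition~\ref{lcseries}, but genus $2$ needs its own argument — or else replacing this step by producing directly a subgroup of $f(\Gamma_2(5))$ that is virtually a cocompact lattice of complex hyperbolic type, which is already of cohomological dimension $\ge 4$. Everything else, in particular the holomorphic case, is routine once the dichotomy and this exclusion are in hand; and the fact that the argument forces us out of the rank-$\le 2$ case is precisely why the proposition serves as evidence that $F$ may be taken holomorphic or anti-holomorphic.
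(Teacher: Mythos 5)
Your main line of attack does not close, and the point where it fails is exactly the one you flag: excluding the rank-$\le 2$ alternative of Carlson--Toledo requires knowing that the relevant finite-index subgroup $J\subset\Gamma_2(5)$ has $H_1(J;\Q)=0$ (so that a factorization of $f|_J$ through $\Z$ or a surface group forces finite image). Proposition~\ref{lcseries} is stated only for $g\ge 3$, and in genus $2$ there is no substitute: $\mathcal T_2=\mathcal K_2$ is Mess's infinitely generated free group, one controls $H_1(\mathcal T_2(5))$ itself but not the first homology of the further finite-index subgroup $J=f^{-1}(\Lambda)$ that actually carries the harmonic map, and the question of whether finite-index subgroups of $\Gamma_2$ (or of $\Gamma_2(5)$) virtually surject onto $\Z$ is precisely the kind of statement that is open or false in genus $2$. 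Note also that the paper's own framing makes this circularity visible: the proposition is offered as \emph{evidence} that $F$ can be taken $\pm$holomorphic, i.e.\ as evidence against the rank-$\le 2$ case, so a proof that must first dispose of that case proves nothing the text does not already concede is unknown. (A small side error: a proper torsion-free quotient of a closed surface group need not be free, e.g.\ $\Z^2$; this does not affect your intended conclusion but the sentence as written is wrong.)

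Your fallback --- ``produce directly a subgroup of $f(\Gamma_2(5))$ that is virtually a cocompact complex hyperbolic lattice'' --- is in fact the paper's entire proof, and the substance lies in carrying it out. The paper restricts to the stabilizer of a nonseparating simple closed curve, identified with a quotient of $\Gamma_{1,2}\cong B_4/Z(B_4)$; labelling the curve by the colors $2$ and $0$ splits the restriction of $\hat\rho_{2,5}=-J_{-A^8}$ to $B_4$ into the $3$-dimensional Burau representation $\beta_q$ at the primitive $10$-th root of unity $q=-A^8$ (partition $21^3$) and a $2$-dimensional Hecke factor with finite image. McMullen's theorem then says $\beta_q(B_4)$ is a cocompact arithmetic lattice in $PU(1,2)$, which has virtual cohomological dimension $4$ (acting on $\mathbb H^2_{\C}$), and monotonicity of vcd under passage to subgroups finishes the argument. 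None of this branching-rule and arithmeticity input appears in your proposal, so the gap is not a technicality but the missing core of the proof. Your second paragraph (pulling back $\omega^2$ by a holomorphic map of generic rank $\ge 2$ to get a nonzero class in $H^4$) is a reasonable way to conclude \emph{if} holomorphy and the rank bound were known, but they are not.
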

\begin{proof}
Consider the stabilizer of a nontrivial nonseparating simple closed curve $\gamma$ on $\Sigma_2$. 
This is isomorphic to $\Gamma_{1,2}/\langle ab^{-1}\rangle$, where $a,b$ denote 
the Dehn twists along the boundary components. 
If we label $\gamma$ by the color $2$ then we obtain a subspace $V$ of the space of conformal blocks in genus 2, which is invariant by the action of the stabilizer.  Its orthogonal $V^{\perp}$ with respect to the 
Hermitian form $H_A$ is the 2-dimensional subspace corresponding to the label $0$ of $\gamma$.
Since $\Gamma_{1,2}$ is isomorphic to $B_4/Z(B_4)$ we obtain two 
representations of $\beta:B_4\to GL(V)$, and 
$\gamma:B_4\to GL(V^{\perp})$. 

Now, both $\beta$ and $\gamma$ are factors  of the restriction 
of $\hat\rho_{2,5}$ to $B_4$.  The restriction of a Hecke type representation of $B_n$ to the  
subgroup $B_{n-1}\subset B_n$ splits into irreducible components indexed 
by the Young subdiagrams with one box less. It follows that 
the restriction of $J_{-A^8}$ to $B_5$ is the irreducible representation with Young diagram $2^21$ and the 
restriction to $B_4$ has two irreducible components corresponding to $21^3$ (of dimension 3) and
 $2^2$ (of dimension 2). Notice that indeed these two representations 
 are irreducible of the same dimension as the ones for generic 
 $q$ (see \cite{For-Vazi}).  It follows that $\beta$ is equivalent to the Burau representation $\beta_{-A^{8}}$ while 
 $\gamma$ is equivalent to the Hecke type representation associated to the partition $2^2$.

Recall also that the curves labeled $0$ in conformal blocks can be filled in by disks. This means that the 
projectivization of $\gamma$  factors through the mapping class group of the torus. Thus 
$\gamma$ factors through  the  folding homomorphism $i:B_4\to B_3$ given by 
$i(b_1)=b_1, i(b_2)=b_2, i(b_3)=b_1$. It is known that $\ker i\subset B_4$ is the free 
group on two generators $b_1b_3^{-1}, b_2b_1b_3^{-1}b_2^{-1}$. 
Therefore the $B_3$ representation  obtained from $\gamma$ is the restriction 
of $-J_{-A^8}$ to $B_3$, namely the Burau representation $\beta_{q}$ of $B_3$ at the primitive
 $10$-th root of unity $q=-A^{8}$. 
 
Recall that the reduced Burau representation $\beta_q$ is given by: 
\[\beta(b_1)= \left (\begin{array}{ccc}
q &- 1 & 0 \\
0 & -1 & 0 \\
0 & 0 & 1\\
\end{array}\right), \beta(b_2)= \left (\begin{array}{ccc}
1 & 0 & 0 \\
q & q &- 1 \\
0 & 0 & -1\\
\end{array}\right), \beta(b_3)= \left (\begin{array}{ccc}
1 & 0 & 0 \\
0 & -1 & 0 \\
0 & -q & q\\
\end{array}\right).\] 

Following (\cite{FK}, Prop.3.1) the image of $\gamma(B_4)=\beta_{q}\circ i(B_4)
\subset U(V^{\perp}, H_A)$ is $B_3/B_3[5]$, where 
$B_3[5]$ is the normal subgroup of $B_3$ generated by $b_i^5$. 
Moreover this group is isomorphic to $GL(2,\Z/5\Z)$, of order 600.

Further,  $\beta$  is equivalent to the Burau representation $\beta_{q}$ of $B_4$ 
at $q$. 
Moreover this representation preserves the Hermitian form $H_{q}$, whose signature is 
$(3,0)$ when $q=\exp\left(\frac{\pm 6\pi i}{10}\right)$  (i.e. $A=\exp\left(\frac{\pm 2\pi i}{10}\right)$) 
and of signature $(1,2)$ when $q=\exp\left(\frac{\pm 2\pi i}{10}\right)$ 
(i.e.  $A=\exp\left(\frac{\pm 6\pi i}{10}\right)$ ). The real points of the linear algebraic group 
obtained by the restriction of scalars $\Q(q+\overline{q}):\Q$   
is therefore isomorphic to the product $U(3)\times U(1,2)$. 
Therefore $\beta_{q}(B_4)\subset U(V)$ is a discrete subgroup. 

McMullen proved in \cite{Mcmullen} that in this case the image of the Burau representation of $B_4$ at  $q=\exp\left(\frac{6\pi i}{10}\right)$ is a lattice in $PU(1,2)=PU(V, H_{q})$. 
This is a cocompact arithmetic lattice. 
In particular the image of $f(\Gamma_2(5))$ contains a cocompact lattice in  $PU(1,2)$ 
whose virtual cohomological dimension is $4$. Now, the virtual cohomological 
dimensions decreases when passing to subgroups (see \cite{Brown}, VIII, 11, ex.1, Prop. 2.4)  and hence the claim follows.  
\end{proof}

\section{Local rigidity of Weil representations}\label{sect:Weil}
The simplest test for the rigidity questions is the case when the representations have finite images. Among those, the Weil representations were intensively studied (see \cite{FuPi3} for a brief history). Weil representations could be defined also by geometric quantization or 
in the quantum groups framework. They were rediscovered within the framework of Chern--Simons theory with abelian gauge group $U(1)$. 
  
Let $k \geq 2$ be an integer, and denote by  
$\langle , \rangle$ the standard bilinear form on 
$(\Z/k\Z)^g \times (\Z/k\Z)^g\to  \Z/k\Z$. The Weil
 representation we consider is a representation in the 
unitary group of the 
complex vector space $\C^{(\Z/k\Z)^g}$ endowed with its 
standard Hermitian 
form. Notice that the canonical basis of this vector space 
is canonically labeled by elements in $\Z/k\Z$.

It is well-known that $Sp(2g,\Z)$ is generated by the matrices having one
of the following forms:
$
\left ( \begin{array}{cc}
    \mathbf 1_g & B \\
    0 & \mathbf 1_g
\end{array}  \right)$
where $B=B^{\top}$ has integer entries,
$
\left ( \begin{array}{cc}
    A & 0       \\
    0 & (A^{\top})^{-1}
\end{array}  \right)$
where $A\in GL(g,\Z)$ and  
$
\left ( \begin{array}{cc}
    0 & -\mathbf 1_g \\
    \mathbf 1_g & 0
\end{array}  \right)$.

We can now define the Weil representations 
on these generating matrices as follows:
\begin{equation}
\rho_{g,k}
\left ( \begin{array}{cc}
     \mathbf 1_g & B\\
     0 & \mathbf 1_g
\end{array}  \right)
= {\rm diag}\left(\exp\left(\frac{\pi \sqrt{-1}}{k}\langle m,Bm\rangle\right)\right)_{ m\in(\Z/k\Z)^{g}},
\end{equation}
where ${\rm diag}$ stands for diagonal matrix with given entries;

\begin{equation}
\rho_{g,k}
\left ( \begin{array}{cc}
   A & 0\\
   0 & (A^{\top})^{-1}
\end{array}  \right)
= (\delta_{A^{\top}m,n})_{m,n\in(\Z/k\Z)^{g}},
\end{equation}
where $\delta$ stands for the Kronecker symbol;
\begin{equation}
\rho_{g,k}
\left ( \begin{array}{cc}
    0 & -\mathbf 1_g\\
   \mathbf  1_g & 0
\end{array}  \right)
=k^{-g/2}\exp\left(-\frac{2\pi\sqrt{-1}\langle m,n\rangle}{k}\right)_{m,n\in({\Z}/k{\Z})^{g}}.
\end{equation}

For even $k$ these formulas define a unitary projective  
representation $\rho_{g,k}$ of $Sp(2g,\Z)$ in 
$U(k^g)/R_8$, where $R_8\subset U(1)\subset U(\C^N)$ is the subgroup of scalar 
matrices whose entries are roots of unity of order $8$.  
For odd $k$ the same formulas define representations of 
the theta subgroup $Sp(2g, 1,2)$. There is however an extension of this representation 
to the whole symplectic group $Sp(2g,\Z)$, as defined by Murakami, Ohtsuki and Okada in 
\cite{MOO}. 
Notice that by construction $\rho_{g,k}$ factors through 
$Sp(2g, \Z/2k\Z)$ for even 
$k$ and through the image of the theta subgroup in $Sp(2g,\Z/k\Z)$ 
for odd $k$.

In \cite{FuPi3} we proved that 
the projective Weil  representation $\rho_{g,k}$ of 
$Sp(2g,\Z)$, for $g\geq 3$ and  even $k$ does not lift to 
linear representations of $Sp(2g,\Z)$, namely it determines  
a generator of $H^2(Sp(2g,\Z/2k\Z); \Z/2\Z)$ and hence a (universal) central extension 
$\tilde{Sp}(2g,\Z/2k\Z)$ of $Sp(2g,\Z/2k\Z)$ by $\Z/2\Z$.  In fact 
$H_2(Sp(2g,\Z/k\Z))=\Z/2\Z$, if and only if $k$ is divisible by $4$, while 
for other cases, it vanishes (see \cite{FuPi2}). 
For odd $k$ it was already known that Weil representations 
did not detect any non-trivial element, i.e. that the projective 
representation $\rho_{g,k}$  lifts to a linear representation.
By pulling-back the central extension of $\tilde{Sp}(2g,\Z/2k\Z)$  to 
$\Gamma_g$ we obtain a central extension $\tilde{\Gamma}_g$ by $\Z/2\Z$, endowed 
with a linear representation $\rho_{g,k}^{U(1)}$ into $U(k^g)$. 
We then have an exact sequence 
\[ 1\to \Gamma_g((k))\to \tilde{\Gamma}_g\to \tilde{Sp}(2g,\Z/k\Z)\to 1\]
where 
\[  \Gamma_g((2k))=\ker(\Gamma_g\to Sp(2g,\Z/2k\Z))\]
is the so-called abelian level $2k$ mapping class group. 

\begin{proposition}
If $g\geq 3$ the  $U(1)$ representations 
$\rho_{g,k}^{U(1)}: \tilde\Gamma_g\to U(k^g)\subset GL(n,\C)$ are locally rigid as 
$GL(n,\C)$ representations. 
\end{proposition}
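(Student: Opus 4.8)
The plan is to translate local rigidity into the vanishing of a first cohomology group and then exploit that the adjoint of a Weil representation has finite image. Recall that, by Weil's infinitesimal criterion, since $\tilde{\Gamma}_g$ is finitely generated and $GL(n,\C)$ is reductive, it suffices to prove
\[ H^1\bigl(\tilde{\Gamma}_g,\ \mathfrak{gl}(n,\C)\bigr)=0,\qquad n=k^g, \]
where $\mathfrak{gl}(n,\C)$ carries the adjoint action $\mathrm{Ad}\circ\rho_{g,k}^{U(1)}$. The central $\Z/2\Z$ in $\tilde{\Gamma}_g$ maps into scalar matrices, hence acts trivially under $\mathrm{Ad}$; since $\mathfrak{gl}(n,\C)$ is a $\Q$-vector space, the inflation map gives an isomorphism $H^1(\tilde{\Gamma}_g,\mathfrak{gl}(n,\C))\cong H^1(\Gamma_g,\mathfrak{gl}(n,\C))$, where the coefficient module now comes from the projective Weil representation $\overline{\rho}_{g,k}\colon\Gamma_g\to PU(k^g)$ and therefore factors through the finite group $G=Sp(2g,\Z/2k\Z)$ (for even $k$; for odd $k$ one uses the Murakami--Ohtsuki--Okada extension, whose adjoint again factors through a finite quotient of $Sp(2g,\Z)$, and the argument is identical).

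Next I would run the Lyndon--Hochschild--Serre spectral sequence for $1\to\Gamma_g((2k))\to\Gamma_g\to G\to 1$ (the surjection $\Gamma_g\to Sp(2g,\Z)\to Sp(2g,\Z/2k\Z)$). Since $G$ is finite and the coefficients form a $\C$-vector space, $H^i(G,-)$ vanishes for $i>0$, so
\[ H^1\bigl(\Gamma_g,\ \mathfrak{gl}(n,\C)\bigr)\ \cong\ H^1\bigl(\Gamma_g((2k)),\ \mathfrak{gl}(n,\C)\bigr)^{G}. \]
The abelian level subgroup $\Gamma_g((2k))$ lies in the kernel of the adjoint action, hence acts trivially on the coefficients, and the right-hand side equals
\[ \mathrm{Hom}_{\C[G]}\bigl(H_1(\Gamma_g((2k));\C),\ \mathfrak{gl}(n,\C)\bigr). \]
So the whole statement reduces to understanding $H_1(\Gamma_g((2k));\C)$ as a $G$-module.

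The crucial input --- and the only place where $g\ge 3$ is needed --- is that $H_1(\Gamma_g((2k));\Q)=0$ for $g\ge 3$. I would deduce this from the five-term exact sequence attached to $1\to\mathcal{T}_g\to\Gamma_g((2k))\to Sp(2g,\Z)_{[2k]}\to 1$, where $Sp(2g,\Z)_{[2k]}=\ker(Sp(2g,\Z)\to Sp(2g,\Z/2k\Z))$. On one side, $Sp(2g,\Z)$ has Kazhdan's property $(T)$ (it is a lattice in $Sp(2g,\R)$, $g\ge 2$), so its finite-index subgroup $Sp(2g,\Z)_{[2k]}$ has finite abelianization and $H_1(Sp(2g,\Z)_{[2k]};\Q)=0$. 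On the other side, by Johnson's theorem $H_1(\mathcal{T}_g;\Q)\cong \wedge^3 H/H$ as an $Sp(2g,\Z)$-module for $g\ge 3$, with $H=H_1(\Sigma_g;\Q)$; this is an irreducible nontrivial rational representation of the algebraic group $Sp(2g)$, and $Sp(2g,\Z)_{[2k]}$ is Zariski dense in $Sp(2g)$ by Borel density, so the coinvariants of $\wedge^3 H/H$ under $\Gamma_g((2k))$ --- which acts through $Sp(2g,\Z)_{[2k]}$ --- vanish. Hence $H_1(\Gamma_g((2k));\Q)=0$, the $\mathrm{Hom}$-group above is zero, and the proposition follows. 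For odd $k$ one replaces $\Gamma_g((2k))$ by the finite-index kernel of the relevant finite quotient of $\Gamma_g$; it still contains $\mathcal{T}_g$ with quotient a finite-index subgroup of $Sp(2g,\Z)$, so the same computation applies.

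No computation with the explicit symplectic generators is needed; the only real care required is (i) checking that the adjoint module genuinely descends to the finite symplectic quotient --- which holds because the central extension is by scalars --- and (ii) invoking Johnson's computation in the precise form valid for $g\ge 3$ together with property $(T)$ and Borel density for the level subgroup. I regard (ii), i.e. establishing the rational $H_1$-vanishing of the abelian level mapping class group, as the substantive point; everything else is formal.
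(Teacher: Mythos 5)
Your proof is correct and follows essentially the same route as the paper's: Weil's infinitesimal criterion, the five-term (inflation--restriction) sequence over the finite symplectic quotient, the vanishing of finite-group cohomology with characteristic-zero coefficients, and finally the vanishing of $H^1(\Gamma_g((2k));\Q)$ for $g\geq 3$. The only real divergence is at that last input, which the paper simply cites from McCarthy, whereas you re-derive it from Johnson's computation $H_1(\mathcal T_g;\Q)\cong\wedge^3 H/H$, property (T) for $Sp(2g,\Z)$, and Zariski density of the level subgroup --- a correct, self-contained substitute for the citation.
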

\begin{proof} 
We have the five term exact sequence in cohomology: 
\[ 0 \to H^1(\tilde{Sp}(2g,\Z/2k\Z), \frak{gl}_n^{\Gamma_g((2k))}) 
\to H^1(\tilde\Gamma_g, \frak{gl}_n)\to  H^1(\Gamma_g((2k)),\frak{gl}_n)^{\tilde{Sp}(2g,\Z/2k\Z)}\to\] 
\[\to H^2(\tilde{Sp}(2g,\Z/k\Z), \frak{gl}_n^{\Gamma_g((2k))})\to H^2(\tilde\Gamma_g, \frak{gl}_n).\]  
Here we use the cohomology with twisted coefficients, where 
the action of $\tilde\Gamma_g$ on the Lie algebra $\frak{gl}_n$ 
is by $Ad\circ \rho_{g,k}^{(U(1)}$. Since the action of 
 $\Gamma_g((2k))$ is trivial we have 
\[ H^1(\Gamma_g((2k)),\frak{gl}_n)= {\rm Hom}(\Gamma_g((2k)),\frak{gl}_n)= 0, {\rm when }\; g\geq 3. \]
In fact $\frak{gl}_n$ is considered here with its structure 
of abelian group and thus  any homomorphism 
$\Gamma_g((2k))\to \frak{gl}_n$ factors through $H_1(\Gamma_g((2k)))$. 
Now, McCarthy proved in \cite{McC} that $H^1(\Gamma_g((k)))=0$, for every 
$k$ and $g\geq 3$.  
This is actually true for any finite index subgroup of $\Gamma_g$ 
which contains the Torelli group. See for instance \cite{Sato} for the 
precise description of the finite group $H_1(\Gamma_g((k)))$. 

Then we have: 
\begin{eqnarray*} H^1(\tilde{Sp}(2g,\Z/k\Z), \frak{gl}_n^{\Gamma_g((2k)))})& =&
 H^1(\tilde{Sp}(2g,\Z/2k\Z), \frak{gl}_n)=\\ 
& =& H^1(\tilde{Sp}(2g,\Z/2k\Z), \frak{gl}_{k^g}) \oplus 
H^1(\tilde{Sp}(2g,\Z/2k\Z), \frak{gl}_{n-k^g}). 
\end{eqnarray*}

Now $H^1(\tilde{Sp}(2g,\Z/2k\Z), \frak{gl}_{n-k^g})=0$ by the universal coefficients theorem, since the action is trivial and 
 $H^1(\tilde{Sp}(2g,\Z/2k\Z))=0$, when $g\geq 3$, because this is a universal central extension. 
Eventually, the five-term exact sequence above implies that: 
\[ H^1(\tilde\Gamma_g, \frak{gl}_n)=0\]
so that $\rho_{g,k}^{U(1)}$ is locally rigid in $GL(n,\C)$, following 
Weil's criterion. 
\end{proof}

\begin{proposition}
The representation $\rho_{2,k}^{U(1)}:\tilde\Gamma_2\to U(k^2)\subset GL(n,\C)$ at genus 
$g=2$ is not locally rigid, if $k\geq 4$ is even or divisible by 3  
and $n >k^2$. 
\end{proposition}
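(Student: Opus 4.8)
The plan is to imitate the cohomological computation used for $g\ge 3$, and to locate exactly where it must break down. Because $\rho_{2,k}^{U(1)}$ factors through the finite group $\tilde{Sp}(4,\Z/2k\Z)$, the level subgroup $\Gamma_2((2k))$ acts trivially on $\frak{gl}_n$; and since $H^i(\tilde{Sp}(4,\Z/2k\Z),V)=0$ for all $i\ge 1$ and all complex modules $V$, the five-term exact sequence attached to $1\to\Gamma_2((2k))\to\tilde\Gamma_2\to\tilde{Sp}(4,\Z/2k\Z)\to 1$ collapses to
\[ H^1(\tilde\Gamma_2,\frak{gl}_n)\;\cong\;H^1(\Gamma_2((2k)),\frak{gl}_n)^{\tilde{Sp}(4,\Z/2k\Z)}\;=\;{\rm Hom}_{\tilde{Sp}(4,\Z/2k\Z)}\bigl(H_1(\Gamma_2((2k));\C),\,\frak{gl}_n\bigr), \]
where $\frak{gl}_n$ carries the adjoint action through $\rho_{2,k}^{U(1)}\oplus\mathbf 1_{n-k^2}$. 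In the $g\ge 3$ argument this right-hand side vanished thanks to McCarthy's result $H^1(\Gamma_g((2k)))=0$; in genus $2$ that result fails, $\Gamma_2((2k))$ having \emph{positive} first Betti number, and the plan is to produce a non-zero element, which by Weil's criterion is the sought non-trivial deformation.

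I would then decompose $\frak{gl}_n$ as a $\tilde{Sp}(4,\Z/2k\Z)$-module into the adjoint block $\frak{gl}_{k^2}$ (the endomorphisms of the Weil module $W_{2,k}=\C^{(\Z/k\Z)^2}$ with the adjoint action), the trivial block $\frak{gl}_{n-k^2}$, and the two off-diagonal blocks, which are isomorphic respectively to $W_{2,k}^{\oplus(n-k^2)}$ and to $(W_{2,k}^{\vee})^{\oplus(n-k^2)}$. The trivial block contributes nothing, since $H^1(\tilde\Gamma_2;\C)^{\oplus(n-k^2)^2}=0$, $H_1(\tilde\Gamma_2)$ being finite. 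The hypothesis $n>k^2$ makes the two off-diagonal blocks non-zero, so it suffices to show that some irreducible constituent of the Weil representation $W_{2,k}$ of $Sp(4,\Z/2k\Z)$, or of its dual, already occurs in $H_1(\Gamma_2((2k));\C)$ as an $Sp(4,\Z/2k\Z)$-representation. (I would not need to decide whether the adjoint block by itself suffices, which is presumably why the statement asks for $n>k^2$.)

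For this last point I would use Mess's theorem that the genus-$2$ Torelli group $\mathcal T_2=\mathcal K_2$ is an infinitely generated free group, together with the known description of $H_1(\mathcal T_2;\Q)$ as an $Sp(4,\Z)$-module of permutation type, supported on the genus-one symplectic splittings of $H_1(\Sigma_2;\Z)$, whose stabilisers in $Sp(4,\Z)$ are commensurable with $SL(2,\Z)\times SL(2,\Z)$. From the extension $1\to\mathcal T_2\to\Gamma_2((2k))\to K\to 1$, with $K=\ker\bigl(Sp(4,\Z)\to Sp(4,\Z/2k\Z)\bigr)$ a higher-rank lattice (hence with property (T), so $H_1(K;\Q)=0$ and $H_2(K;\Q)$ finite-dimensional), one obtains that $H_1(\Gamma_2((2k));\Q)$ is, up to a finite-dimensional correction coming from the transgression, the coinvariant module $H_1(\mathcal T_2;\Q)_{K}$, which is a \emph{finite-dimensional} representation of $Sp(4,\Z/2k\Z)$ induced from the mod-$2k$ image of a splitting stabiliser. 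By Frobenius reciprocity and the multiplicativity $W_{2,k}\cong W_{1,k}\boxtimes W_{1,k}$ of the Weil module under a block-diagonal $SL(2,\Z)\times SL(2,\Z)$, the occurrence of a constituent of $W_{2,k}$ in this module becomes an explicit question about invariant vectors of the genus-one Weil representation modulo $2k$, which I would settle by a Gauss sum computation; the answer is affirmative precisely when $k$ is even and $\ge 4$, or divisible by $3$.

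The hard part is this last step: describing $H_1(\Gamma_2((2k));\C)$ as an $Sp(4,\Z/2k\Z)$-module precisely enough to detect the Weil constituent --- in particular checking that the transgression $H_2(K;\Q)\to H_1(\mathcal T_2;\Q)_{K}$ does not destroy the relevant piece --- and matching the resulting numerical count with the stated arithmetic range for $k$. The remaining ingredients --- the collapse of the five-term sequence, the block decomposition of $\frak{gl}_n$, and the role of the hypothesis $n>k^2$ --- are formal.
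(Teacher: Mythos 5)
Your reduction is set up correctly and is, at one point, more careful than the paper's own argument: the collapse of the five-term sequence to $H^1(\tilde\Gamma_2,\frak{gl}_n)\cong{\rm Hom}_{\tilde{Sp}(4,\Z/2k\Z)}\bigl(H_1(\Gamma_2((2k));\C),\frak{gl}_n\bigr)$ is exactly the paper's first step, and your block decomposition of $\frak{gl}_n$, together with the remark that the trivial block $\frak{gl}_{n-k^2}$ contributes nothing (because $H^1(\Gamma_2((2k));\Q)^{Sp}\cong H^1(\Gamma_2;\Q)=0$), is correct. But that very remark is what makes your route genuinely hard: having discarded all trivial constituents, everything now rests on exhibiting a \emph{nontrivial} common $Sp(4,\Z/2k\Z)$-constituent of the Weil module $W_{2,k}$ (or $W_{2,k}^{\vee}$, or ${\rm End}(W_{2,k})$) and $H_1(\Gamma_2((2k));\C)$, and this you do not prove. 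The chain ``Mess, hence a permutation-module description of $H_1(\mathcal T_2;\Q)$, hence its $K$-coinvariants, hence Frobenius reciprocity, hence a Gauss sum'' is a plausible program, but its two decisive points --- that the transgression $H_2(K;\Q)\to H_1(\mathcal T_2;\Q)_K$ does not kill the relevant piece, and that the restriction of $W_{1,k}\boxtimes W_{1,k}$ to the mod-$2k$ image of a splitting stabiliser has an invariant vector exactly when $k\ge 4$ is even or $3\mid k$ --- are asserted with the answer read off from the statement rather than computed. As it stands, the nonvanishing of $H^1(\tilde\Gamma_2,\frak{gl}_n)$ is not established; this is a genuine gap, not a routine verification.

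For comparison, the paper takes a much shorter path at the decisive point: it quotes McCarthy's theorem that $H^1(\Gamma_2((k)))$ is nontrivial (containing a factor $\Z$) precisely when $k$ is divisible by $2$ or $3$, and deduces nonvanishing of the invariant Hom-space directly from this together with the decomposition of $\frak{gl}_n$ for $n>k^2$, with no analysis of $\mathcal T_2$, of Mess's structure theorem, or of Weil-representation constituents. Note that your own observation about the trivial block is in some tension with the paper's shortcut (an $Sp(4,\Z/2k\Z)$-invariant functional on $H_1(\Gamma_2((2k));\Q)$ cannot exist, since $H^1(\Gamma_2;\Q)=0$), so if you pursue your route you really do need the constituent computation --- McCarthy's input is precisely what would have to replace it.
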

\begin{proof}
Notice that $H^1(\Gamma_2((k)))$ is non-trivial 
when $k$ is divisible by $2$ or $3$ (see \cite{McC}).
In particular it contains a factor $\Z$.

From (\cite{LuMa}, p.37-38 and more generally Prop. 10.1 from 
\cite{Brown}) we have 
$H^1(F,\frak{gl}_m)=0$ for any representation 
of  a finite group $F$ in the $GL(m,\C)$. 
In fact finite groups are reductive and hence they are rigid.   
In particular, we have   $H^1(\tilde{Sp}(4,\Z/2k\Z), \frak{gl}_n)=0$. 

\begin{lemma}\label{vanishing} 
\[ H^2(\tilde{Sp}(2g,\Z/2k\Z), \frak{gl}_n)=0, \; g\geq 2, k\neq 2.\]
\end{lemma}
\begin{proof}
This follows from the 
following classical fact (see Prop. 2.1 of \cite{Brown}): 
If $G$ is a finite group 
and $M$ is a $G$-module which is also a $\mathbb K$-vector space
for a field $\mathbb K$ whose characteristic 
does not divide the order of $G$ then 
$H^j(G,M)=0$, when $j >0$. In particular this is true in characteristic zero. 
\end{proof}

The five term exact sequence above shows that 
\[ H^1(\tilde\Gamma_2, \frak{gl}_n)\cong H^1(\tilde\Gamma_2((k)),\frak{gl}_n))^{\tilde{Sp}(4,\Z/2k\Z)}\]
But now 
\[H^1(\tilde\Gamma_2((2k)),\frak{gl}_n)^{\tilde{Sp}(4,\Z/2k\Z)}\cong 
{\rm Hom}(\Gamma_2((2k)),\frak{gl}_n)^{\tilde{Sp}(4,\Z/2k\Z)} \supset \frak{gl}_n^{\tilde{Sp}(4,\Z/2k\Z)}
\]
since $H_1(\Gamma_2((2k)))\supset \Z$. 
In particular if $n > k^2$ then the unitary representation
of  $\tilde{Sp}(4,\Z/2k\Z)$ in $\frak{gl}_n$  keeps invariant 
the orthogonal of $\frak{gl}_{k^2}$ within $\frak{gl}_n$. 
This implies that $H^1(\tilde\Gamma_2, \frak{gl}_n)\neq 0$, so that the representations 
$\rho_{2,k}^{U(1)}$ are not locally rigid. 
\end{proof}

\begin{remark}
Since $Sp(2g,\Z)$, $g\geq 2$ have property $F$, their unitary representations 
have finite images and thus they are discrete. In particular any small 
deformation of the representation $\rho_{g,k}$ is still a discrete representation 
in $U(k^g)$. Therefore Selberg's proof from \cite{Sel} 
can be  used to show that the images are isomorphic.
Since $Sp(2g,\Z)$ are linear reductive (see \cite{LuMa}) 
all its linear representations, in particular the $U(1)$ representations 
$\rho_{g,k}: Sp(2g,\Z)\to U(k^g)\subset GL(n,\C)$, are locally rigid as 
representations in $GL(n,\C)$, when $g\geq 2$ and $n\geq k^g$.
The linear reductivity  is a consequence of the Margulis super-rigidity.
In the unitary case this also follows from the easier fact 
that $Sp(2g,\Z)$ has property T. 
\end{remark}

\begin{remark}
It seems that the $SU(2)/SO(3)$ quantum representations $\rho_{p}$ having 
finite image are locally rigid, if $g\geq 3$.  
It suffices to show that 
\[ H^1(\ker \rho_{p},\frak{gl}_n)=0.\]
At $4$-th roots of unity (and hence $p=8$) this could follow from the 
description due to Masbaum and Wright (see \cite{Ma1,Wr2})
of $\ker \rho_{8}$, and the fact that finite-index 
subgroups of $\Gamma_g$ containing the Johnson kernel $\mathcal K_g$ have finite abelianization, 
according to Putman (see \cite{Pu}).  
For $p=12$ this might use the results from \cite{Wr1}. 
\end{remark}

\section{Tangent representations from moduli spaces}
\subsection{Mapping class groups as  (outer) automorphisms groups}
Now, let $\Gamma_{g,k}^{r,1}\subset \Gamma_{g,k}^{r+1}$ denote the index $r+1$ subgroup of mapping classes 
of those  homeomorphisms which fix one marked point. Then we have the more general statement: 
 \[ \Gamma_{g,k}^{r |1} = {\rm Aut}^+(\pi_{g,k}^r; P_r, \mathbf P^{\partial}_k).\]
Here $P_{r}$ consists of the $r$ conjugacy classes of peripheral loops with the exception of the one around the marked basepoint. 
 
Then  we have the following commutative diagram consisting of two exact sequences corresponding to 
Birman's exact sequence,  connected by  isomorphisms provided by the Dehn--Nielsen--Baer theorem: 
\[\begin{array}{ccccccccc}
1&\to& \pi_{g}^r/Z(\pi_{g}^r)& \to &P\Gamma_{g}^{r+1} &\to &P\Gamma_{g}^r &\to& 1\\
& & \downarrow &  & \downarrow & & \downarrow & & \\
 1 &\to & \pi_{g}^r/Z(\pi_{g}^r) & \to & {\rm Aut}^+(\pi_{g}^r; \mathbf P_{r}) & \to &  {\rm Out}^+(\pi_{g,k}^r;\mathbf P_{r}) & \to &  1\\
 \end{array}
  \]
  We have also a similar commutative diagram in the non pure case: 
  \[\begin{array}{ccccccccc}
1&\to& \pi_{g}^r/Z(\pi_{g}^r)& \to &\Gamma_{g}^{r |1} &\to &\Gamma_{g}^r &\to& 1\\
& & \downarrow &  & \downarrow & & \downarrow & & \\
 1 &\to & \pi_{g}^r/Z(\pi_{g}^r) & \to & {\rm Aut}^+(\pi_{g,k}^r;  P_r) & \to &  {\rm Out}^+(\pi_{g,k}^r,P_r) & \to &  1\\
 \end{array}
  \]
Here $Z(G)$ denotes the center of the group $G$. 
Now, the group $\pi_{g}^r$ is either a free group of rank $2g+r-1$, if $r>0$ or else a surface group.  In particular it is centerless when $2g+r-2>0$.

Consider a surface with one boundary component $\Sigma_{g,1}^r$ and take the basepoint to be on the boundary component.  
  Therefore, the basepoint is automatically invariant by the pure mapping class group.
  It follows that we also have the alternative description: 
\[ \Gamma_{g,1}^{r} = {\rm Aut}^+(\pi_{g,1}^{r}; [\partial \Sigma_{g,1}^r],  P_r). \]
Notice that homeomorphisms of $\Sigma_{g,1}^r$  automatically preserve  the orientation. 
Denote by 
\[ \tau: \Gamma_{g,1}^{r} \to {\rm Aut}^+(\pi_{g,1}^{r}; [\partial \Sigma_{g,1}^r],  P_r) \]
the natural isomorphism, which generalizes the usual Artin representation. 
The following is rather well-known: 
\begin{lemma}\label{split}
There is an isomorphism between $\Gamma_{g,1}^{r |1}$ and the semi-direct product 
$\pi_{g,1}^r\rtimes_{\tau} \Gamma_{g,1}^{r}$, if  and $2g+r-1>0$, which restricts to an isomorphism between 
the pure mapping class group $P\Gamma_{g,1}^{r+1}$ and the  semi-direct product 
$\pi_{g,1}^r\rtimes_{\tau} P\Gamma_{g,1}^{r}$. 
\end{lemma}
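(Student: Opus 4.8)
The plan is to exhibit an explicit splitting of Birman's exact sequence
\[ 1\to \pi_{g,1}^r\to \Gamma_{g,1}^{r|1}\to \Gamma_{g,1}^{r}\to 1\]
and then invoke the standard fact that a group extension with a section is a semidirect product, with the conjugation action being precisely the one induced on the kernel. Since the basepoint lies on the boundary component, the forgetful map $\Gamma_{g,1}^{r|1}\to\Gamma_{g,1}^{r}$ (forgetting the marked interior point, or equivalently the extra puncture) admits an obvious set-theoretic section: push the marked point to a collar neighborhood of the boundary and realize it as ``sitting next to'' the boundary basepoint, so that any mapping class of $\Sigma_{g,1}^r$ fixing the boundary pointwise can be performed in the complement of a small disk around that marked point. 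First I would make this rigorous by fixing a bicollar $\partial\Sigma_{g,1}^r\times[0,1]$ and placing the marked point at $(\ast,1/2)$, where $\ast$ is the boundary basepoint; every homeomorphism fixing $\partial\Sigma_{g,1}^r$ pointwise can be isotoped to fix this bicollar pointwise, hence fixes the marked point, giving a well-defined group-theoretic section $s:\Gamma_{g,1}^{r}\to\Gamma_{g,1}^{r|1}$.

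The second step is to identify the conjugation action of $s(\Gamma_{g,1}^{r})$ on the kernel $\pi_{g,1}^r$ with $\tau$. This is where the algebraic description via Dehn--Nielsen--Baer is convenient: under the commutative diagram already displayed in the excerpt, the kernel of $\Gamma_{g,1}^{r|1}\to\Gamma_{g,1}^{r}$ is the ``point-pushing'' subgroup, which Birman's theory identifies with $\pi_1$ of the surface based at the marked point (centerless since $2g+r-1>0$, so $\pi_{g,1}^r/Z(\pi_{g,1}^r)=\pi_{g,1}^r$), and the action of a mapping class $\varphi$ on a loop $\alpha$ is by $\varphi_\ast\alpha$, i.e. exactly the automorphism $\tau(\varphi)\in{\rm Aut}^+(\pi_{g,1}^r;[\partial\Sigma_{g,1}^r],P_r)$. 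Concretely I would check that if $P_\alpha$ denotes the point-push along $\alpha$ and $\varphi$ is supported away from the collar, then $\varphi P_\alpha\varphi^{-1}=P_{\varphi_\ast\alpha}$, which is the standard Birman relation. This yields $\Gamma_{g,1}^{r|1}\cong\pi_{g,1}^r\rtimes_\tau\Gamma_{g,1}^{r}$.

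For the pure case, I would observe that the section $s$ restricts: if $\varphi\in P\Gamma_{g,1}^{r}$ fixes each of the $r$ punctures, then $s(\varphi)$ fixes them too and also fixes the new marked point by construction, so $s(\varphi)\in P\Gamma_{g,1}^{r+1}$. Hence $s$ restricts to a section of $1\to\pi_{g,1}^r\to P\Gamma_{g,1}^{r+1}\to P\Gamma_{g,1}^{r}\to1$, giving $P\Gamma_{g,1}^{r+1}\cong\pi_{g,1}^r\rtimes_\tau P\Gamma_{g,1}^{r}$ with the same action. Alternatively one can derive the pure statement purely algebraically from the non-pure one by intersecting with the finite-index pure subgroups and noting the semidirect product structure is inherited.

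The main obstacle is not conceptual but a matter of being careful that the section is a genuine group homomorphism and not merely a set-theoretic lift: one must verify that the collar-supported representatives can be chosen compatibly so that composition of mapping classes corresponds to composition of the chosen representatives. This is handled by the standard trick of working with homeomorphisms that are literally the identity on the fixed bicollar (not just isotopic to such), using that $\Gamma_{g,1}^r$ is by definition the group of isotopy classes fixing $\partial$ pointwise, so we may always normalize representatives to be the identity on a fixed collar; with that normalization composition is strict and $s$ is visibly a homomorphism. The only mild subtlety is the hypothesis $2g+r-1>0$, needed precisely so that $\pi_{g,1}^r$ is a nonabelian free group (rank $2g+r-1$) with trivial center, matching the kernel $\pi_{g,1}^r/Z(\pi_{g,1}^r)$ appearing in Birman's sequence; I would state this explicitly.
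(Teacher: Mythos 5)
Your argument is correct and is essentially the paper's proof: the paper likewise produces the section by embedding $\Sigma_{g,1}^r$ into $\Sigma_{g,1}^{r+1}$ as the complement of a punctured annulus neighbourhood of the boundary (your bicollar containing the extra marked point), and then observes that the conjugation action of the image subgroup on the point-pushing kernel coincides with $\tau$. The only quibble is a harmless numerical slip: $\pi_{g,1}^r$ is free of rank $2g+r$ (not $2g+r-1$, which is the rank of $\pi_g^r$); the hypothesis $2g+r-1>0$ still guarantees rank at least $2$ and hence the trivial center you need.
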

\begin{proof}
The embedding of $\Sigma_{g,1}^r$ into $\Sigma_{g,1}^{r+1}$ as the complement of a punctured annulus $\Sigma_{0,2}^1$ induces 
injective homomorphisms $\pi_1(\Sigma_{g,1}^r,*) \to \pi_1(\Sigma_{g,1}^{r+1},*) $ 
and $\Gamma_{g,1}^r\to \Gamma_{g,1}^{r |1}$. Here $*$ is a basepoint on the boundary component of the punctured annulus. 
This provides a splitting of the Birman exact sequence above. Moreover,  the 
action of the subgroup $\Gamma_{g,1}^{r}$ on the subgroup $\pi_1(\Sigma_{g,1}^r,*)$ coincides with $\tau$. Therefore 
$\Gamma_{g,1}^{r |1}$ is isomorphic to the given semi-direct product. 
\end{proof}

It is easy to see that there is a more general version, in which we consider mapping class groups instead of pure ones 
(see e.g. \cite{DF}). The corresponding semi-direct product is now isomorphic to the stabilizer of the last puncture in the 
mapping class group of the surface with one extra puncture, provided the surface has boundary.

\subsection{Geometric actions of  (outer) automorphisms groups on moduli spaces}
Let $\pi$ be a finitely generated group and $G$ a connected Lie group. We denote by 
${\rm Hom}(\pi,G)$ the space of representations of $\pi$. 
The group ${\rm Aut}(\pi)$  acts on ${\rm Hom}(\pi,G)$ by right composition:  
 \[ (\varphi \cdot \rho)(x)=\rho(\varphi^{-1}(x)), \; {\rm for}\; \varphi\in {\rm Aut}(\pi) , \rho\in {\rm Hom}(\pi,G), x\in \pi.\]
 This is a real algebraic action. 
Let now $\Mod_{\pi,G}$ be the 
character variety of representations $\pi\to G$, or the GIT quotient ${\rm Hom}(\pi,G)/G$. 
Then the action 
\[ {\rm Aut}(\pi) \times {\rm Hom}(\pi,G) \to {\rm Hom}(\pi,G)\]
above passes to a quotient action of 
\[{\rm Out}(\pi) \times  \Mod_{\pi,G} \to \Mod_{\pi,G}.\]

Let $F$ be a finitely generated group. Fix a surjective homomorphism $\rho:\pi \to F$ whose kernel $\ker\rho$ is denoted by $K$ 
and consider its stabilizer, i.e. the  subgroup of those elements whose induced action on $F$ via $\rho$ is trivial:
\[ {\rm Aut}(\pi,\rho)=\{\varphi; \rho(\varphi(x))=\rho(x), {\rm for \; any }\; x\in \pi\}\subset {\rm Aut}(\pi).\]
Note that ${\rm Inn}(K)\subset {\rm Aut}(\pi,\rho)$. The image of ${\rm Aut}(\pi,\rho)$ in ${\rm Out}(\pi)$ 
will be denoted as ${\rm Out}(\pi,\rho)$. However ${\rm Inn}(\pi)$ does not preserve $\rho$.   
In order to fix this problem consider the following quotient: 
\[  \widetilde{{\rm Out}(\pi,\rho)}=  {\rm Aut}(\pi,\rho)/{\rm Inn}(K).\]
Then $\widetilde{{\rm Out}(\pi,\rho)}$ has a well-defined action on $ \Mod_{\pi,G}$ and keeps the class $[\rho]$ invariant. 
Note that we have an exact sequence:
\[ 1\to F \to  \widetilde{{\rm Out}(\pi_g,\rho)} \to {\rm Out}(\pi,\rho)\to 1.\]

For any homomorphism $r:F\to G$ the group ${\rm Aut}(\pi,r\circ \rho)$ fixes $r\circ \rho\in {\rm Hom}(\pi,G)$. 
Therefore there is an induced action at the level of Zariski tangent spaces. This  
provides a linear representation of ${\rm Aut}(\pi_g,r\circ \rho)$ on the Zariski tangent space 
$T_{\rho}{\rm Hom}(\pi,G)$ at $r\circ \rho$, which will be called the {\em tangent representation}  at $r\circ \rho$. 
Recall that Weil identified $T_{r\circ \rho}{\rm Hom}(\pi,G)$ with the space of twisted  $1$-cocycles 
$Z^1(\pi, \mathfrak g_{Ad\; r\circ\rho})$ with coefficients in the Lie algebra $\mathfrak g$ twisted by 
the composition of the adjoint representation $Ad$ of $G$ with $r\circ \rho$. This linear 
representation 
\[ {\rm Aut}(\pi,r\circ \rho)\to GL(Z^1(\pi, \mathfrak g_{Ad\; r\circ\rho}))\]
could be defined directly at the level of twisted cocycles $\psi: \pi\to \mathfrak g_{Ad\; r\circ\rho}$, as 
a right composition.

We explained above that $\widetilde{{\rm Out}}(\pi,r\circ\rho)$ acts on $\Mod_{\pi,G}$ and stabilizes 
the class $[r\circ \rho]$ of $r\circ \rho$. We derived then a linear action 
 of $\widetilde{{\rm Out}}(\pi,r\circ\rho)$ on the Zariski tangent space $T_{[\rho]}\Mod_{\pi,G}$.    
 By Weil, this amounts to a linear representation:
\[  \widetilde{{\rm Out}}(\pi,r\circ \rho)\to GL(H^1(\pi, \mathfrak g_{Ad\; r\circ\rho})).\] 

For non-reductive $G$, for instance when $G=\C^*$, we have to modify 
slightly this setting, as it will be explained below. 

This setting also extends to families of representations using intermediary quotients. 
Let us consider the map $\iota_{F}:{\rm Hom}(F,G)\to {\rm Hom}(\pi,G)$, given by $\iota_F(r)=r\circ \rho$. 
We denote by $V_F=\iota_F({\rm Hom}(F,G))\subset  {\rm Hom}(\pi,G)$ the closed subset 
consisting of all those $\rho$ with $\rho(\pi_g)$ isomorphic to a quotient of $F$. 
For any homomorphism $r:F\to G$ we have  ${\rm Aut}(\pi_g,r\circ \rho)\subset {\rm Aut}(\pi_g,\rho)$. 
The group action of ${\rm Aut}(\pi_g,\rho)$ on ${\rm Hom}(\pi_g,G)$ keeps  globally invariant the subvariety $V_F$. 
Note that $V_F$ is not pointwise invariant. 
Consider the Gunning sheaf $TV_F=\cup_{\rho\in V_F} T_{\rho}{\rm Hom}(\pi_g,G)$. As an immediate consequence 
${\rm Aut}(\pi_g,\rho)$ acts both on $TV_F$ and the pull-back $\iota_F^*TV_F$
\[ {\rm Aut}(\pi_g,\rho) \times \iota_F^*TV_F \to \iota_F^*TV_F.\]
We have a similar action $\iota_F:\Mod_{F,G}\to \Mod_{\pi,G}$  whose image $\iota_F(\Mod_{F,G})$ is endowed with 
a Gunning sheaf $T\Mod_{F,G}=\cup_{\rho\in \Mod_{F,G}} T_{\rho}\Mod_{\pi_g,G}$ and a fiber-preserving action: 
\[ \widetilde{{\rm Out}}(\pi_g,\rho) \times \iota_F^*T\Mod_{F,G} \to \iota_F^*T\Mod_{F,G}.\]
We ignored the fact that dimensions of the fibers could be of non-constant dimension.
If we restrict to the non-singular locus of the varieties $\Mod_{F,G}$ or $V_F$, then Gunning sheaves restrict to fiber bundles. 
On any open contractible (in the usual topology)  non-singular subset $U\subset \Mod_{F,G}$ or $V_F$ respectively 
we obtain   linear representations 
\[ U\times {\rm Aut}(\pi,\rho)\to GL(Z^1(\pi, \mathfrak g_{Ad\; r\circ\rho}))\]
and 
\[ U\times  \widetilde{{\rm Out}}(\pi, \rho)\to GL(H^1(\pi, \mathfrak g_{Ad\; r\circ\rho}))\] 
respectively, parameterized by $U$.

\subsection{Finite representations}
The group ${\rm Aut}^+(\pi_g)$ of orientation-preserving automorphisms  of 
$\pi_g$ acts on ${\rm Hom}(\pi_g,G)$ by right composition and this action
passes to a quotient action of $\G_g$ on $\Mod_{g,G}$. 

Let now $F$ be a finite quotient of $\rho:\pi_g\to F$. 
The subgroup  ${\rm Aut}^+(\pi_g, \rho)$  is of finite index in ${\rm Aut}^+(\pi_g)$.
If we fix an embedding $F\subset G$ then 
${\rm Aut}^+(\pi_g,\rho)$ is the stabilizer of 
$\rho$ on ${\rm Hom}(\pi_g,G)$. 
Its image $\G_g(\rho)$ in $\G_g$ is also 
the stabilizer of the class $[\rho]$ of $\rho$ in $\Mod_{g,G}$.

Consider the exact sequence associated to $\rho$: 
\[ 1 \to K\to \pi_g \to F \to 1\]
where $F$ is finite. We are given a representation $r:F\to GL(V)$ 
which induces the structure of $\pi_g$-module on $V$. 
Without loss of generality we can suppose 
that $V$ is from now on an {\em irreducible} 
$F$-module. For the sake of simplicity we consider 
first that $V$ is a complex vector space.

Following \cite{GLLM} we call $\rho$ {\em redundant} if 
it factors through $\pi_g\to \mathbb F_g$ and if  the 
kernel of the homomorphism $\mathbb F_g\to F$ 
contains a free generator. Here $\mathbb F_g$ is the free group of $g$ generators and the homomorphism $\pi_g\to \mathbb F_g$ can be taken as 
the one induced by the inclusion of the surface $\Sigma_g$ 
as the boundary of a handlebody with $g$ handles. 

Furthermore $F\subset G$ is {\em adjoint} if 
the composition $F\to GL(\mathfrak g)$ by the 
adjoint representation $Ad:G\to GL(\mathfrak g)$ is 
an irreducible representation.

\begin{proposition}
Suppose that $\rho$ is a finite adjoint redundant representation of $\pi_g$. 
Then the tangent action at $T_{[\rho]}\Mod_{g,G}$ is an arithmetic group 
of symplectic/orthogonal or linear type. 
\end{proposition}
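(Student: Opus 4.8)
The plan is to turn the tangent action into a representation on a twisted cohomology group, read off whether it is of symplectic/orthogonal or linear type from Poincar\'e duality, and then use the redundancy hypothesis to produce enough unipotent elements to invoke an arithmeticity criterion. First I would invoke Weil's identification recalled above: the tangent space $T_{[\rho]}\Mod_{g,G}$ is $H^1(\pi_g,\mathfrak g_{Ad\; r\circ\rho})$, and the tangent representation is the induced linear action of (the image of) $\widetilde{\Out}(\pi_g,\rho)$, its finite normal subgroup $F$ acting through the $F$-module structure on $\mathfrak g$. Because $F$ is finite, the module $\mathfrak g$, the action, and — if it exists — an $F$-invariant bilinear form on $\mathfrak g$ are all defined over a number field $\mathbb K$; I would fix a ring of integers $\O_{\mathbb K}$ and an $\O_{\mathbb K}$-lattice $L\subset H^1(\pi_g,\mathfrak g)$ preserved by the image of the finitely generated group $\Out(\pi_g,\rho)\le\G_g$. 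The adjointness hypothesis says $\mathfrak g$ is an irreducible $F$-module; when it is nontrivial this gives $\mathfrak g^{\pi_g}=\mathfrak g^F=0$, so $H^1(\pi_g,\mathfrak g)$ is nonzero, of dimension $(2g-2)\dim\mathfrak g$ by the Euler characteristic of $\Sigma_g$ (and $(2g-2)\dim\mathfrak g+2\dim\mathfrak g^F$ in general).

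Next I would extract the bilinear structure. Poincar\'e duality for the aspherical closed surface $\Sigma_g$ provides a perfect pairing
\[ H^1(\pi_g,\mathfrak g)\otimes H^1(\pi_g,\mathfrak g^*)\longrightarrow H^2(\pi_g,\C)\cong\C, \]
cup product followed by the evaluation $\mathfrak g\otimes\mathfrak g^*\to\C$, which is preserved by the orientation-preserving group $\Out^+(\pi_g)$ and hence by the whole image. If $\mathfrak g\not\cong\mathfrak g^*$ as $F$-modules this exhibits $L$ and its analogue for $\mathfrak g^*$ in perfect duality and places the image in $GL(L)$: the linear type. If $\mathfrak g\cong\mathfrak g^*$, pick an $F$-invariant nondegenerate pairing $b$ on $\mathfrak g$, which by Schur's lemma is symmetric or alternating; transporting the duality pairing through $b$ gives a nondegenerate $\O_{\mathbb K}$-form $Q$ on $L$, and graded-commutativity of the degree-one cup product flips the parity, so $Q$ is alternating when $b$ is symmetric and symmetric when $b$ is alternating. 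In particular, when $\mathfrak g$ carries an invariant symmetric form — e.g.\ when $\mathfrak g$ is semisimple, via the Killing form — the image preserves a symplectic form. In each case the image lands in the $\O_{\mathbb K}$-points of the appropriate classical $\mathbb K$-group $\mathbf G$ of symplectic, orthogonal, or general linear type, equivalently in $(\mathrm{Res}_{\mathbb K/\Q}\mathbf G)(\Z)$.

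Finally I would prove the image has finite index in $\mathbf G(\O_{\mathbb K})$, and here the redundancy hypothesis is essential. Writing $\Sigma_g=\partial H_g$, so that $\rho$ factors as $\pi_g\to\mathbb F_g\to F$ with a free generator $x_g$ of $\mathbb F_g$ in the kernel of $\mathbb F_g\to F$, the automorphisms of $\mathbb F_g$ that fix $x_1,\dots,x_{g-1}$ and only alter $x_g$ — multiplying or conjugating $x_g$ by words in the remaining generators, together with the inner automorphism by $x_g$ — all lie in $\Aut(\pi_g,\rho)$ and are realized in the mapping class group through $\mathcal H_g\to\Out(\mathbb F_g)$. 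A direct computation shows that such automorphisms act on $H^1(\pi_g,\mathfrak g)$ by unipotent, transvection-type transformations, and that as the word and the $\mathfrak g$-coefficient vary they sweep out a family of unipotent one-parameter subgroups; the adjointness of $\mathfrak g$ guarantees that this family spans a generating set of unipotent root subgroups of $\mathbf G$. Together with Zariski density of the image in $\mathbf G$ over each Galois factor (which one gets from the irreducibility of $H^1(\pi_g,\mathfrak g)$ as a module for the image plus the abundance of these unipotents), an arithmeticity criterion of Venkataramana type — a Zariski-dense subgroup of an absolutely almost simple $\Q$-group containing such a generating family of unipotent root subgroups is of finite index in its integral points — then gives the arithmeticity of the tangent action, as in \cite{GLLM}. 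The hard part will be precisely this last step: writing the transvections explicitly, checking that they generate the required unipotent subgroups rather than a proper subgroup, and carrying out the restriction-of-scalars bookkeeping so that Zariski density and generation hold simultaneously in every Galois-conjugate factor; the redundant and adjoint hypotheses are calibrated to make exactly this go through.
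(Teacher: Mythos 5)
Your plan is, in substance, the proof the paper intends: the text derives this proposition directly from the main theorem of \cite{GLLM}, and your three steps --- Weil's identification of $T_{[\rho]}\Mod_{g,G}$ with $H^1(\pi_g,\mathfrak g_{Ad\, r\circ\rho})$, a duality pairing fixing the type, and redundancy-supplied transvections feeding a Venkataramana-style arithmeticity criterion --- are exactly the skeleton of that theorem's proof. The one genuine difference is in the bookkeeping of the type. You classify the form by Schur's lemma over $\C$ (self-dual or not, symmetric versus alternating invariant form on $\mathfrak g$, parity flipped by the degree-one cup product), whereas the paper, following \cite{GLLM}, decomposes the rational group algebra $\Q[F]=\Q\oplus\bigoplus_i A_i$ into simple algebras $A_i=M_{m_i}(D_i)$ over division algebras and endows $A_i^{2g-2}$ with a skew-Hermitian $A_i$-valued form, the arithmetic group being ${\rm Aut}_{\mathfrak D_i}(\mathfrak D_i^{2g-2})$ for an order $\mathfrak D_i\subset A_i$. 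The rational decomposition is the one that actually determines the $\Q$-group whose integral points you take: the symplectic/orthogonal/linear trichotomy is governed by the involution on the simple factor $A_i$ (and unitary types over division algebras occur), not merely by whether $\mathfrak g\cong\mathfrak g^*$ over $\C$, so your case analysis would have to be redone over $\Q$ before the integrality and finite-index statements make sense. Two smaller points: an automorphism sending the redundant generator $x_g$ to $x_g w$ preserves $\rho$ only when $w\in\ker(\mathbb F_g\to F)$, so the family of transvections is somewhat smaller than you describe; and the finite-index step that you rightly flag as the hard part is not carried out in the paper either --- it is delegated wholesale to \cite{GLLM} --- so your proposal sits at the same level of completeness as the text, with the correct plan for filling that gap.
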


This is a consequence of the main result of \cite{GLLM}. 
Specifically, one decomposes the 
semisimple algebra $\Q[F]$ into simple algebras:
\[ \Q[F]=\Q\oplus \bigoplus_{i=1}^p A_i\]
where each $A_i$ is a ring of matrices $m_i\times m_i$ over a division algebra 
$D_i$ with center  a number field $L_i$. Each $A_i$ corresponds to a nontrivial irreducible $\Q$-representation of $F$.  
Then the authors of \cite{GLLM} constructed representations 
of (a finite-index subgroup of) $\G_g(\rho)$ into 
the algebraic group of $V_i$-automorphisms 
${\rm Aut}_{A_i}(A_i^{2g-2}, \langle -,- \rangle)$ 
of $A_i^{2g-2}$ endowed with a skew-Hermitian sesquilinear 
$A_i$-valued form. Then the image of this representation     
is a finite index subgroup of 
the arithmetic group ${\rm Aut}_{\mathfrak D_i}(\mathfrak D_i^{2g-2})$, 
where $\mathfrak D_i\subset A_i$ is the  image of $\Z[F]$ 
by the projection onto $A_i$ and is an order in $A_i$.

\begin{proposition}
Assume that $V$ is a nontrivial $F$-module. 
Then we have an isomorphism
\[ H^1(\pi_g, V)\to {\rm Hom}_{\C[F]}(V,V)^{(2g-2)\dim V}. \]
\end{proposition}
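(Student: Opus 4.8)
The plan is to reduce the identity to a dimension count. Over $\C$ the algebra ${\rm Hom}_{\C[F]}(V,V)$ is, by Schur's lemma, equal to $\C$, so the right-hand side is simply a $\C$-vector space of dimension $(2g-2)\dim V$; everything then rests on the fact that $\pi_g$ is an orientable $2$-dimensional Poincar\'e duality group, together with Maschke's lemma.

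First I would take the free resolution of $\Z$ over $\Z[\pi_g]$ coming from the one-relator presentation $\pi_g=\langle a_1,b_1,\dots,a_g,b_g\mid\prod_i[a_i,b_i]\rangle$ (equivalently, the cellular chain complex of the universal cover of $\Sigma_g$),
\[ 0\to\Z[\pi_g]\to\Z[\pi_g]^{2g}\to\Z[\pi_g]\to\Z\to0,\]
and apply ${\rm Hom}_{\Z[\pi_g]}(-,V)$ to obtain a three-term cochain complex $0\to V\to V^{2g}\to V\to0$ in degrees $0,1,2$ computing $H^*(\pi_g,V)$. Additivity of the Euler characteristic over this complex gives
\[ \dim H^0(\pi_g,V)-\dim H^1(\pi_g,V)+\dim H^2(\pi_g,V)=(2-2g)\dim V,\]
so it suffices to show that $H^0(\pi_g,V)$ and $H^2(\pi_g,V)$ vanish.

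For $H^0$: the $\pi_g$-action on $V$ factors through $\rho\colon\pi_g\to F$, hence $H^0(\pi_g,V)=V^{\pi_g}=V^F$, which is $0$ because $V$ is a nontrivial \emph{irreducible} $F$-module (the standing assumption of this subsection) and a nonzero $F$-invariant would span a proper nonzero submodule. For $H^2$: Poincar\'e duality for the closed oriented surface $\Sigma_g$ gives $H^2(\pi_g,V)\cong H_0(\pi_g,V)=V_{\pi_g}=V_F$, and since $F$ is finite and we are in characteristic zero, Maschke's lemma makes the natural map $V^F\to V_F$ an isomorphism, whence $V_F=0$ as well. Therefore $\dim H^1(\pi_g,V)=(2g-2)\dim V=\dim{\rm Hom}_{\C[F]}(V,V)^{(2g-2)\dim V}$, which gives the asserted isomorphism. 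Irreducibility is genuinely needed: for a reducible nontrivial $V$ the two sides have dimensions $(2g-2)\dim V$ and $(2g-2)\dim V\cdot\dim_{\C}{\rm End}_F(V)$, respectively.

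The step requiring the most care is promoting this to a \emph{canonical} isomorphism rather than a coincidence of dimensions --- for instance one compatible with the action of the stabiliser ${\rm Aut}^+(\pi_g,\rho)$, or, if one wants to run the same argument over a non-algebraically-closed field of characteristic zero, compatible with the commuting action of the division algebra $D={\rm End}_F(V)$, so that the exponent records the rank of a free $D$-module. In that case one keeps the relevant commuting action visible on the cochain complex above and checks that it disturbs neither the Euler-characteristic computation nor the vanishing of $H^0$ and $H^2$; over $\C$ with $V$ irreducible this last point is automatic.
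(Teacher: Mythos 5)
Your proof is correct, but it takes a genuinely different route from the paper's. The paper argues structurally: the five-term exact sequence of $1\to K\to\pi_g\to F\to 1$, together with the vanishing of higher cohomology of the finite group $F$ in characteristic zero, identifies $H^1(\pi_g,V)$ with $H^1(K,V)^F={\rm Hom}_{\C[F]}(H_1(K;\C),V)$; it then feeds in the Chevalley--Weil decomposition $H_1(K;\C)\cong\C^2\oplus\C[F]^{2g-2}$ and applies Schur's lemma. That route produces a \emph{natural} isomorphism (functorial in the pair $(\pi_g,\rho)$, hence compatible with the ${\rm Aut}^+(\pi_g,\rho)$-action used immediately afterwards to define the tangent representation) and displays the module structure on which the arithmeticity results of Grunewald--Larsen--Lubotzky--Malestein rest. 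You instead reduce everything to a dimension count: the Euler characteristic of the standard two-term complex of the one-relator presentation gives $\dim H^0-\dim H^1+\dim H^2=(2-2g)\dim V$, and you kill $H^0$ by nontriviality plus irreducibility and $H^2$ by Poincar\'e duality plus the averaging isomorphism $V^F\cong V_F$. Since ${\rm Hom}_{\C[F]}(V,V)=\C$ for irreducible $V$ over $\C$, both sides have dimension $(2g-2)\dim V$ and are abstractly isomorphic, which is all the literal statement asks for. Your argument is more elementary and self-contained --- it bypasses Chevalley--Weil entirely, and in fact essentially reproves the relevant piece of it --- at the cost of producing only an unspecified isomorphism; you correctly flag this caveat, and you are also right that the irreducibility of $V$ (the standing assumption of this subsection, not restated in the proposition) is genuinely needed on either route, since for reducible $V$ the two sides have different dimensions.
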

\begin{proof}
The  five-term exact sequence reads:
\[ H^1(F, V^K)\to H^1(\pi_g, V)\to H^1(K,V)^F\to H^2(F, V^K).\]
As in the proof of lemma \ref{vanishing} above we use the vanishing of the higher  cohomology 
of a finite group with coefficients in a  $\Q$-vector space
(Prop. 2.1 of \cite{Brown}), in order to derive   
that the restriction homomorphism 
$H^1(\pi_g, V)\to H^1(K,V)^F$ is an isomorphism.

A classical result from \cite{CW} gives a description 
of the $F$-module $H_1(K;\Q)$. Another proof 
is given in \cite{GLLM}. In the case when $\pi_g$ 
were replaced by a free group this was a classical result by Gasch\'utz. 
Specifically, for every $g\geq 2$ we have an isomorphism 
of $F$-modules:
\[ H_1(K;\Q)\to \Q^2\oplus \Q[F]^{2g-2}.\]

Some remarks are in order to understand the action of $F$ on the 
module $H^1(K,V)$. Indeed $F$ acts on $K$ by conjugacy and on $V$ 
through $\rho$. Classes in  $H^1(K,V)$ are
represented by homomorphisms $f:K\to V$, since $V$ is a trivial $K$-module, 
and for $\gamma\in F$, $x\in K$ we have:
 \[ \gamma\cdot f (x)= \rho(\gamma) f(\tilde \gamma^{-1} x\tilde \gamma)\]
where $\tilde \gamma\in \pi_g$ is an arbitrary lift of $\gamma$. 
In particular  the class of $f$ is $F$-invariant if for any 
$\gamma\in F$ and $x\in K$ we have:
\[ f(\tilde \gamma x\tilde \gamma^{-1})=\rho(\gamma) f(x)\]

By the previous description of the $F$-action on 
 $H^1(K,V)$ and the Chevalley-Weil description of 
$H^1(K;\C)$ we derive an isomorphism
\[ H^1(\pi_g, V)\to {\rm Hom}_{\C[F]}(\C[F]^{2g-2}\oplus \C^2, V).\]
On the other hand, for simple $\C[F]$-modules $V$ and $W$ we have   
${\rm Hom}_{\C[F]}(W, V)=0$, unless $V$ and $W$ are 
isomorphic, from Schur's lemma. As a consequence of Maschke's theorem 
$\C[F]=\C\oplus\bigoplus_{i=1}^m V_i^{\dim(V_i)}$, where $V_i$ are all irreducible 
$C[F]$-modules. 
It follows that  
\[{\rm Hom}_{\C[F]}(\C[F]^{2g-2}\oplus \C^2, V)=
{\rm Hom}_{\C[F]}(V,V)^{(2g-2)\dim V}.\]
\end{proof}

Now we have an action of ${\rm Aut}^+(\pi_g,\rho)$ 
on $H^1(\pi_g,V)$  induced by the left composition, which 
we denote by $\phi: {\rm Aut}^+(\pi_g,\rho)\to GL(H^1(\pi_g,V))$. 
Notice however that inner automorphisms do not 
necessarily act trivially. First, not all inner automorphisms 
are in  ${\rm Aut}^+(\pi_g,\rho)$. Second, if the 
conjugacy $\iota_{\alpha}$ by $\alpha\in \pi_g$ does 
belong to ${\rm Aut}^+(\pi_g,\rho)$, then its image is 
the automorphism:
\[ \phi(\iota_{\alpha})=r\rho(\alpha)\]
Since elements in ${\rm Aut}^+(\pi_g,\rho)$ 
which project onto the same element of $\G_g(\rho)$ differ 
by an inner automorphism from ${\rm Aut}^+(\pi_g,\rho)$, it follows that 
we have an induced representation into a quotient group: 
\[ \Phi: \G_g(\rho)\to GL(H^1(\pi_g,V))/r(F).\] 
This is particularly simple when $F$ is abelian, since 
$r(F)$ must be a group of scalar matrices 
and so we obtain a projective representation. 
In the case considered by \cite{GLLM} the authors rather considered 
punctured surfaces in order to work directly with the 
mapping class group $\G_g^1\subset \rm Aut^+(\pi_g)$. 
We have an exact sequence
\[ 1\to \pi_g\to \G_g^1\to \G_g\to 1\]
and the representation $\Phi$ lifts to 
\[ \Phi: \G_g^1(\rho)\to GL(H^1(\pi_g,V)).\] 
The argument from (\cite{GLLM}, section 8.2) shows that 
its restriction to a suitable finite-index subgroup of $\G_g^1(\rho)$
factors through  $\G_g$, so that $\Phi$ lifts to 
a genuine representation after restriction to a finite index subgroup of 
$\G_g(\rho)$.

The case when $F$ is an abelian group and $V$ a 1-dimensional irreducible 
representation of it has been considered by 
Looijenga in \cite{Loo} where the associated representations are called 
Prym representations. This has to be connected with previous 
construction by Gunning (see \cite{Gun}) in genus 2 and 
later extended by Chueshev (see \cite{Chue})
to all genera, which is based on Prym differentials.   

\subsection{Magnus representations for free groups}\label{magnus}
In the case when $\pi=\mathbb F_n$ is a free group, there exists a simple description of these representations. Specifically, we first consider $V=\Z[\mathbb F_n]$ as a left $\mathbb F_n$-module. Then  
\[ H^1(\mathbb F_n, \Z[\mathbb F_n])=I(\mathbb F_n)= \ker(\Z[\mathbb F_n]\to \Z)\]
On the other hand we have an isomorphism 
\[ I(\mathbb F_n) \to (\Z[\mathbb F_n])^n.\]
given by the Fox derivatives. Specifically, if  the $x_i$ form a free basis of $\mathbb F_n$ then 
we send $x\in \mathbb F_n$ into $(\frac{\partial x^{-1}}{\partial x_i})_{i=1,n}$, where the Fox derivatives 
 $\frac{\partial }{\partial x_i}: \mathbb F_n \to \Z[\mathbb F_n]$ form a basis of the space of 1-cocycles and they are determined by: 
 \[ \frac{\partial x_j}{\partial x_i}=\delta_{ij}.\]
Now any automorphism $\varphi$ of $\mathbb F_n$ induces an automorphism of $I(\mathbb F_n)$; under the previous 
isomorphism this automorphism is described as an element of $GL(n, \Z[\mathbb F_n])\subset GL(V^{\oplus n})$ and is given by the matrix 
\[ \overline{\left(\frac{\partial \varphi(x_i)}{\partial x_i}\right)}\in GL(n, \Z[\mathbb F_n])\]
where $\overline{A}$ is the involution of $\Z[\mathbb F_n]$ sending each $x\in \mathbb F_n$ into $x^{-1}$. 

In particular, given a surjective homomorphism $\rho:\mathbb F_n \to F$ we derive a representation 
\[ {\rm Aut}(\mathbb F_n, \rho) \to GL(H^1(\mathbb F_n, \Z[F]))\]
which is obtained from the Magnus representation in $GL(n, \Z[\mathbb F_n])$ by evaluating each entry  via 
$\rho:\Z[\mathbb F_n]\to \Z[F]$. A similar description holds when we choose a family  $V_F$ of representations  
$r:F\to GL(V)$, in which case the tangent representation 
\[ {\rm Aut}(\mathbb F_n, \rho) \to GL(H^1(\mathbb F_n, V_{r\circ \rho}))\]
is obtained by evaluating the Magnus representation entries at points of $V_F$.

\section{Long-Moody twisted cohomological induction}
\subsection{The construction}
Long  and Moody considered in \cite{Long3} a very general recipe for constructing braid group representations (see also \cite{BT}). 
We generalize their construction here to general automorphisms groups. 

{\em Data}. Let $\pi$ be a group, in our case it will be a closed surface group or a free group. 
Let now $B$ be a group related to the automorphisms group ${\rm Aut}(\pi)$, in the sense that it is 
endowed with a homomorphism $\tau: B \to {\rm Aut}(\pi)$. 

Our data  consists of a (finite dimensional){\em  $B$-equivariant linear representation}, namely $\rho:\pi \to GL(V)$ coming  along with a
linear representation  $\beta: B\to GL(V)$ such that $\rho$ is equivariant with respect to the  source and target actions 
$\tau$ and $\beta$: 
\[ \beta(b) \rho(f) = \rho(\tau(b)f) \beta(b), {\rm for \; any} \; b\in B, f\in \pi\]

{\em (Equivariant) twisted cohomological induction}. To every  $B$-equivariant representation:
\[(\rho:\pi \to GL(V), \beta: B\to GL(V), \tau: B \to {\rm Aut}(\pi))\]
we can associate a new representation 
\[ \beta^+: B \to GL(V^+), \; {\rm where } \; V^+=H^1_{\rho}(\pi,V)\]
by the explicit formula: 
\[ (\beta^+(b) \psi) (f) = \beta(b) \left( \psi(\tau^{-1}(b)(f))\right)\]
for every $\psi \in Z^1_{\rho}(\pi, V), f\in \pi, b\in B$. 

\begin{proposition}
The twisted cohomological induction is well-defined. 
\end{proposition}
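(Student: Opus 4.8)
The plan is to verify three things: that the formula $(\beta^+(b)\psi)(f)=\beta(b)\bigl(\psi(\tau^{-1}(b)(f))\bigr)$ produces a twisted $1$-cocycle when $\psi$ is one; that this assignment respects the coboundary subspace, so it descends to $H^1_\rho(\pi,V)$; and that $\beta^+$ so defined is a genuine group homomorphism $B\to GL(V^+)$. A preliminary remark I would make is that, strictly speaking, the formula as written requires $\tau$ to lift to an honest automorphism of $\pi$ for each $b$, i.e. one should read $\tau^{-1}(b)(f)$ as $\varphi^{-1}(f)$ for a chosen representative $\varphi\in{\rm Aut}(\pi)$ of $\tau(b)$; I would point out that the ambiguity by an inner automorphism is exactly what is absorbed when one passes from cocycles to cohomology, which is one reason the construction only gives a well-defined action on $H^1$ and not on $Z^1$ in general. (Alternatively, if $B\to{\rm Aut}(\pi)$ is a genuine homomorphism one works at the cocycle level first and then projects.)

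\textbf{Cocycle condition.} Recall $\psi\in Z^1_\rho(\pi,V)$ means $\psi(f_1f_2)=\psi(f_1)+\rho(f_1)\psi(f_2)$. Writing $\varphi$ for a representative of $\tau(b)$, I compute
\[
(\beta^+(b)\psi)(f_1f_2)=\beta(b)\psi\bigl(\varphi^{-1}(f_1)\varphi^{-1}(f_2)\bigr)
=\beta(b)\Bigl(\psi(\varphi^{-1}(f_1))+\rho(\varphi^{-1}(f_1))\psi(\varphi^{-1}(f_2))\Bigr),
\]
and then I invoke the equivariance hypothesis $\beta(b)\rho(\varphi^{-1}(f_1))=\rho(\tau(b)\varphi^{-1}(f_1))\beta(b)=\rho(f_1)\beta(b)$ to pull $\rho(f_1)$ out front, obtaining $(\beta^+(b)\psi)(f_1)+\rho(f_1)(\beta^+(b)\psi)(f_2)$. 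So $\beta^+(b)\psi$ is again a cocycle. Here the equivariance relation $\beta(b)\rho(f)=\rho(\tau(b)f)\beta(b)$ is used with $f=\varphi^{-1}(f_1)$ so that $\tau(b)f=f_1$.

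\textbf{Coboundaries and homomorphism property.} A coboundary has the form $\psi_v(f)=\rho(f)v-v$ for $v\in V$; applying the formula and the same equivariance identity gives $(\beta^+(b)\psi_v)(f)=\beta(b)(\rho(\varphi^{-1}(f))v-v)=\rho(f)\beta(b)v-\beta(b)v=\psi_{\beta(b)v}(f)$, a coboundary, so $B^1$ is preserved and $\beta^+(b)$ descends to a linear automorphism of $V^+=H^1_\rho(\pi,V)$; this step also shows that if one only has $\tau$ well-defined modulo inner automorphisms the resulting map on cohomology is independent of the chosen representative $\varphi$, since changing $\varphi$ by an inner automorphism of $\pi$ changes a cocycle by a coboundary. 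Finally, for $b_1,b_2\in B$ I check $\beta^+(b_1b_2)=\beta^+(b_1)\beta^+(b_2)$ directly from the formula, using $\beta(b_1b_2)=\beta(b_1)\beta(b_2)$ and $\tau(b_1b_2)=\tau(b_1)\tau(b_2)$ (choosing the representative $\varphi_{b_1b_2}=\varphi_{b_1}\varphi_{b_2}$), which reduces to a routine substitution. I would note that I expect no serious obstacle here --- everything is a formal consequence of the equivariance axiom --- so the only genuinely delicate point, and the one I would treat with care, is the bookkeeping around lifting $\tau(b)\in{\rm Out}(\pi)$-style ambiguities to ${\rm Aut}(\pi)$ and making sure the induced operator on $H^1$ is unambiguous; in the setting of the paper, where $\tau:B\to{\rm Aut}(\pi)$ is an honest homomorphism, even this evaporates and the proof is a two-line verification.
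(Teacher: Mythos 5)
Your proof is correct and follows essentially the same route as the paper: the same verification of the cocycle identity via the equivariance relation $\beta(b)\rho(\varphi^{-1}(f_1))=\rho(f_1)\beta(b)$, and the same check that coboundaries map to coboundaries. The extra verification of the homomorphism property and the remark about lifting from ${\rm Out}(\pi)$ are harmless additions (the latter is moot here since $\tau$ is an honest homomorphism to ${\rm Aut}(\pi)$), but the core argument is identical to the paper's.
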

\begin{proof}
We first have to verify that $\beta^+(b) \psi\in Z^1_{\rho}(\pi,V)$: 
\begin{eqnarray*}
(\beta^+(b) \psi) (fg) & = & \beta(b) \left( \psi(\tau^{-1}(b)(fg))\right)=
\beta(b) \left( \psi(\tau^{-1}(b)(f)\cdot \tau^{-1}(b)(g))\right)=\\
& = & \beta(b) \left( \psi(\tau^{-1}(b)(f) + \rho(\tau^{-1}(b)f) \psi(\tau^{-1}(b)(g)\right)=\\
& = & \beta^+(b)\psi(f) + \beta(b)\rho(\tau^{-1}(b)f) \psi(\tau^{-1}(b)(g)=\\
& = & \beta^+(b)\psi(f) +\rho(f)\beta(b)  \psi(\tau^{-1}(b)(g)=\beta^+(b)\psi(f) +\beta^+(b) \psi(g).\\
\end{eqnarray*}
Moreover this representation on $Z^1_{\rho}(\pi,V)$ descends to $H^1_{\rho}(\pi,V)$. Indeed, if $\psi\in B^1_{\rho}(\pi, V)$, say 
$\psi(g)=\rho(g)v-v$, for any $g\in \pi$ for some $v\in V$, then 
\begin{eqnarray*}
(\beta^+(b) \psi) (g) &= & \beta(b) \left( \psi(\tau^{-1}(b)(g))\right)=
\beta(b)(\rho(\tau^{-1}(b)g)v-v)=\\
& = & \beta(b) \rho(\tau^{-1}(b)(g))v - \beta(b) v=
\rho(g)\beta(b)v-\beta(b)v\in B^1_{\rho}(\pi,V).
\end{eqnarray*}
\end{proof}

\begin{lemma}
A pair $(\rho: \pi \to GL(V), \;\beta:B\to GL(V))$ satisfying the $B$-equivariance is equivalent to 
a representation ${\bm  \beta}: \pi \rtimes_{\tau} B\to GL(V)$ of the semi-direct product group $\pi\rtimes_{\tau} B$ obtained by using the action of $B$ on $\pi$  by means of $\tau$. 
\end{lemma}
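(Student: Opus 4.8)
The plan is to unwind both sides of the claimed equivalence and check that the $B$-equivariance condition is exactly the statement that a certain map on $\pi\rtimes_\tau B$ is a homomorphism. First I would recall the multiplication rule in $\pi\rtimes_\tau B$: writing elements as pairs $(f,b)$ with $f\in\pi$, $b\in B$, the product is $(f,b)(f',b')=(f\cdot\tau(b)(f'),\,bb')$. Given a pair $(\rho,\beta)$ satisfying the equivariance relation $\beta(b)\rho(f)=\rho(\tau(b)f)\beta(b)$, I would \emph{define} ${\bm\beta}:\pi\rtimes_\tau B\to GL(V)$ by ${\bm\beta}(f,b)=\rho(f)\beta(b)$ and verify it is a homomorphism by the direct computation
\[
{\bm\beta}(f,b){\bm\beta}(f',b')=\rho(f)\beta(b)\rho(f')\beta(b')=\rho(f)\rho(\tau(b)f')\beta(b)\beta(b')=\rho(f\cdot\tau(b)f')\beta(bb')={\bm\beta}\big((f,b)(f',b')\big),
\]
using the equivariance in the middle step and multiplicativity of $\rho$ and $\beta$ elsewhere. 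This shows one direction.

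For the converse, I would start from a representation ${\bm\beta}:\pi\rtimes_\tau B\to GL(V)$ and restrict it along the two canonical inclusions $\pi\hookrightarrow\pi\rtimes_\tau B$, $f\mapsto(f,1)$, and $B\hookrightarrow\pi\rtimes_\tau B$, $b\mapsto(1,b)$, obtaining $\rho:={\bm\beta}|_\pi$ and $\beta:={\bm\beta}|_B$. The equivariance relation is then recovered by evaluating ${\bm\beta}$ on the conjugation identity $(1,b)(f,1)(1,b)^{-1}=(\tau(b)f,1)$ in the semidirect product, which gives $\beta(b)\rho(f)\beta(b)^{-1}=\rho(\tau(b)f)$, i.e. exactly the $B$-equivariance of the pair. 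Finally I would note that the two constructions are mutually inverse: starting from $(\rho,\beta)$, forming ${\bm\beta}(f,b)=\rho(f)\beta(b)$ and restricting recovers $\rho$ and $\beta$ (since $(f,1)\mapsto\rho(f)$ and $(1,b)\mapsto\beta(b)$); conversely, starting from ${\bm\beta}$, the restrictions multiply back to ${\bm\beta}$ because $(f,b)=(f,1)(1,b)$ in $\pi\rtimes_\tau B$.

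There is essentially no obstacle here — the statement is a packaging lemma and the proof is a routine unwinding of the semidirect product structure, entirely analogous to the standard fact that a representation of a semidirect product is the same data as a pair of compatible representations of the factors. The only point requiring mild care is bookkeeping of the direction of the action $\tau$ and the placement of $\tau(b)$ versus $\tau(b^{-1})$ so that the semidirect product multiplication matches the equivariance relation as written in the excerpt; I would fix conventions at the outset so that $(1,b)(f,1)(1,b)^{-1}=(\tau(b)f,1)$ holds, which makes every subsequent identity line up.
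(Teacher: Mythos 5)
Your proof is correct and follows the same route as the paper, which simply identifies $\rho$ and $\beta$ with the restrictions of ${\bm\beta}$ to $\pi$ and to a section of $B$ in the split extension; you merely spell out the verification that ${\bm\beta}(f,b)=\rho(f)\beta(b)$ is a homomorphism and that the conjugation identity recovers the equivariance relation. No issues.
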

\begin{proof} Indeed $\bm{\beta}|_{\pi}=\rho$, while 
$\bm{\beta}|_{s(B)}=\beta$, where $s:B\to \pi \rtimes_{\tau} B$ is a section of the split extension. 
\end{proof}

\begin{remark}
If $\pi$ is either a free group or a surface group and $\beta$ is unitary then, generically $\beta^+$ is unitary (see \cite{Long3}, Thm. 2.8).  
\end{remark}

A linear representation is cohomological if it can be obtained by iterated Long-Moody induction from the trivial representation. 

\begin{question}
It is true that any quantum representation of the mapping class group $\Gamma_{g,1}$, $g\geq 3$, is a subrepresentation of a cohomological representation?
\end{question} 
Here by a quantum representation we mean a representation obtained from a modular tensor category with zero anomaly, e.g. 
obtained from the Turaev--Viro construction. 

\begin{proposition}
The Fibonacci representation $\rho_{2,5}$ of $\Gamma_2$ can be obtained from the trivial representation of the braid groups by cohomological induction. 
\end{proposition}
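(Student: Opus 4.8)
The plan is to pass to the Hecke algebra picture and to realise the relevant Jones representation by a chain of box additions to Young diagrams, each box addition being one step of Long--Moody twisted cohomological induction. First I would record the reduction. By what was proved in Section~\ref{sect:fibonacci}, the representation $\rho_{2,5}$, which as a representation of $\Gamma_2$ factors through $\Gamma_0^6$, is the projectivisation of $-J_{q}$ at the primitive $10$-th root of unity $q=-A^8$; up to tensoring by a one-dimensional character, and after projectivising, this is the Jones (Hecke type) representation $V_\lambda$ of $B_6$ attached to the rectangular Young diagram $\lambda=2^3=(2,2,2)$ specialised at $q=-A^8$, composed with $B_6\twoheadrightarrow\Gamma_2$. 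Since the class of cohomological representations is closed under direct sums and under passing to direct summands (the Long--Moody functor being additive), and since tensoring by a one-dimensional representation, projectivising, and precomposing with a quotient map do not affect the conclusion, it suffices to realise $V_{(2,2,2)}$ --- defined over $\mathbb{Z}[q^{\pm1}]$ and then specialised --- as a direct summand of an iterated Long--Moody induction of the trivial one-dimensional representation of the braid groups.

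Two inputs are needed. First, Long's classical observation that running the cohomological induction of the previous section with a free group acted on through the Artin action (equivalently, via the split Birman sequence of Lemma~\ref{split}) and starting from the trivial one-dimensional representation yields the reduced Burau representation, so that Burau is cohomological. Second --- and this is the crux --- the behaviour of one step of cohomological induction on Hecke type representations: in the braid group setting of \cite{Long3} (see also \cite{BT}) it turns a Jones representation $V_\kappa$ of $B_n$ into a representation of $B_{n+1}$ whose Hecke type constituents are exactly the $V_\mu$ with $\mu$ obtained from $\kappa$ by adding one box, the trivial representation corresponding to the one-row diagram; in other words, on Young diagrams one step of cohomological induction implements ``add a corner box in all possible ways''.

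Granting this, I would choose the lattice path of Young diagrams
\[ (1)\ \to\ (2)\ \to\ (2,1)\ \to\ (2,2)\ \to\ (2,2,1)\ \to\ (2,2,2), \]
which is precisely the restriction tower of $V_{(2,2,2)}$ read backwards (each arrow adds a box, and the first branchings are exactly those computed in Section~\ref{sect:fibonacci}), so that the representation attached to the next diagram is a direct summand of the cohomological induction of the previous one, and hence $V_{(2,2,2)}$ is a direct summand of the five-fold iterated Long--Moody induction of the trivial representation. Specialising the parameter to $q=-A^8$ is harmless here: by Lemma~\ref{irred} and the dimension and irreducibility computations already carried out in Section~\ref{sect:fibonacci}, every representation along this path, and every box-addition branching occurring, stays irreducible of the generic dimension at $q=-A^8$; tensoring by the appropriate sign character, projectivising, and composing with $B_6\twoheadrightarrow\Gamma_2$ then recovers $\rho_{2,5}$. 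The main obstacle is the second input: making precise, with the sign and reduced/unreduced conventions fixed, that one step of Long--Moody induction decomposed into Hecke algebra constituents obeys the add-a-box rule, and that this decomposition is unchanged when the parameter is specialised to the primitive $10$-th root of unity, at which $H(q,k)$ fails to be semisimple in general --- though the rectangular and near-rectangular pieces along the chosen path do remain irreducible of generic dimension there. A secondary, purely bookkeeping point is matching the Artin action with the sphere-braid-group variant that makes $B_n$ act on a free group of the right rank.
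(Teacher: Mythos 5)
Your first step --- reducing to the Jones representation $J_{-A^{8}}$ of $B_6$ attached to the partition $2^3$ via the identification of Section~\ref{sect:fibonacci}, modulo a one-dimensional twist, projectivisation and the surjection $B_6\twoheadrightarrow\Gamma_2$ --- is exactly the reduction the paper makes. But the step you yourself flag as ``the crux'' is a genuine gap, and moreover it runs the construction in the wrong direction. In the Long--Moody setup of this paper (and of \cite{Long3}, Thm.~2.1), one starts from a representation of $B_{n+1}$, restricts it to the point-stabilizer $\mathbb F_n\rtimes_\tau B_n\subset B_{n+1}$, and the twisted cohomological induction outputs a representation of $B_n$ on $H^1(\mathbb F_n,V)\cong V^{\oplus n}$: the strand number goes \emph{down} (note the shift of subscript in $\rho_{n+1}^+:B_n\to GL(V_{n+1}^{\oplus n})$). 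Your proposed ``add-a-box'' rule sends a representation of $B_n$ to one of $B_{n+1}$, and your lattice path $(1)\to(2)\to(2,1)\to(2,2)\to(2,2,1)\to(2,2,2)$ climbs through partitions of $1,2,\dots,6$, i.e.\ through representations of $B_1,\dots,B_6$ of increasing strand number. No iteration of the Long--Moody functor traverses such a tower, so the argument as organized cannot be completed; what you have written down is essentially the branching rule for \emph{restriction} of Hecke modules, not a computation of what the induction produces.

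For comparison, the paper's proof is a two-line citation: Corollary~2.10 of \cite{Long3} states that all Jones representations of Hecke algebras attached to two-row Young diagrams are obtained by cohomological induction from the trivial representation (Long's iteration starts from trivial representations of larger braid groups and inducts downwards, tracking which Hecke constituents appear), and the partition $2^3$ is conjugate to the two-row diagram $(3,3)$, so its Jones representation differs from a two-row one by a one-dimensional twist --- which is exactly the sign absorbed in writing $\hat\rho_{2,5}\simeq -J_{-A^{8}}$ in Section~\ref{sect:fibonacci}. If you want a self-contained argument rather than the citation, the content you must supply is precisely Long's analysis of the Hecke constituents of one downward induction step (together with the non-degeneration at the specialisation $q=-A^{8}$, which, as you note, does need the irreducibility statements of Lemma~\ref{irred}); asserting an add-a-box rule in the upward direction does not substitute for it.
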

\begin{proof}
Indeed from (\cite{Long3}, Cor.2.10) we know that all Jones representations
of Hecke algebras associated to Young diagrams with two rows can be obtained by 
cohomological induction. Our description of $\rho_{2,5}$ as a representation of the braid group $B_6$ from Section \ref{sect:fibonacci} 
completes the proof of the claim. 
\end{proof}

\subsection{Examples of cohomological representations}
{\em Braid group representations}. 
Long and Moody used this method to define from a series of representations  $\rho_n:B_n\to GL(V_n)$ of the braid groups 
$B_n$  a new series of linear representations $\rho_{n+1}^+: B_n\to GL(V_{n+1}^n)$ (see \cite{Long3}, Thm.2.1).
 Note the shift in the subscript.
We identify $B_n$ and the mapping class group of the 2-disk with $n$ punctures. The stabilizer of the (first) puncture is 
isomorphic to the semi-direct product $F_n\rtimes_{\tau} B_n\subset B_{n+1}$, where $\tau$ denotes the  Artin representation 
$\tau: B_n\to {\rm Aut}(F_n)$. Then twisted cohomological induction yields a representation 
$\rho_{n+1}^+: B_n\to GL(H^1_{\rho_{n+1}}(\pi,V_{n+1}))$. As $\pi$ is the free group on $n$ generators, 
the standard  free resolution reads (see \cite{Brown}, I.4.4, IV.2, ex.3): 
\[ 0 \to \Z[\pi]^n\to \Z[\pi]\to \Z\to 0.\]
Therefore $H^1_{\rho}(\pi,V)$ is isomorphic to $V^{\oplus n}$. 
With this identification at hand one could write explicitly $\beta^+$ in terms of generators and the values of $\beta$ (see \cite{Long3}, Thm.2.2). 

It is already noticed that there are several embeddings of some semi-direct product $\pi\rtimes B_n$ within $B_{n+1}$. 
Above we considered the pure braid local system in which $\pi$ is freely generated by 
$g_1=\sigma_1^2$, $g_2=\sigma_2 \sigma_1^2 \sigma_2^{-1}$, $g_3=\sigma_3\sigma_2 \sigma_1^2 \sigma_2^{-1}\sigma_3^{-1}$, $\ldots$, $g_n=\sigma_n \sigma_{n-1}\cdots\sigma_2\sigma_1^2
\sigma_2^{-1}\cdots \sigma_{n-1}^{-1}\sigma_n^{-1}$. The action of $B_n$, which is generated by $\sigma_2,\sigma_3, \ldots, \sigma_n$ normalizes the subgroup $\pi$, and the conjugacy action is identified to the action of $B_n$ on the fundamental group 
$\pi$ of the punctured disk. 

If we set $g_1=(\sigma_2\sigma_3\cdots\sigma_n)^n$ and then inductively $g_{i+1}=\sigma_i g_i \sigma_i^{-1}$ then 
the subgroup $\pi$ generated by $g_1,g_2,\ldots,g_n$ is also free of rank $n$ and the subgroup 
$B_n$ generated by $\sigma_1,\sigma_2,\ldots,\sigma_{n-1}$ also normalizes $\pi$. This provides the inner automorphism local system $\pi\rtimes B_n$. Moreover, as we have an obvious map 
$p:\pi\rtimes B_n\to \Z\rtimes B_n$, we can use an arbitrary representation $\beta_n:B_n\to GL(V_n)$ 
and consider $(\beta_n\circ p)^+:B_n\to GL(V_n^{\oplus n})$.

{\em Mapping class group representations}.  According to Lemma \ref{split}, 
$\Gamma_{g,1}^{r |1}=\pi_{g,1}^r\rtimes \Gamma_{g,1}^r$. 
The Long-Moody twisted cohomological induction machinery provides then for 
any representation $\beta: \Gamma_{g,1}^{r+1}\to GL(V)$ another linear representation 
\[ \beta^+: \Gamma_{g,1}^r \to GL(V^{\oplus 2g+r}).\]

{\em Finite index subgroups of mapping class groups}. 
Consider a homomorphism $\rho: \pi \to GL(V)$ and  $B={\rm Aut}^+(\pi,\rho)$, with the usual action on $\pi$ and the 
trivial action $\beta$ on $V$. Then $\beta^+$ is the tangent action of $B$ on ${\rm Hom}(\pi,GL(V))$ at $\rho$. 

{\em Surface braid groups}. We can consider the braid group $B(\Sigma_{g,1},r)=\ker(\Gamma_{g,1}^r\to \Gamma_{g,1})$ on the surface $\Sigma_{g,1}$ on $r$ strands. The isomorphism from Lemma \ref{split}  provides an isomorphism between 
the stabilizer of the last strand in $B(\Sigma_{g,1},r+1)$ and the semi-direct product $\pi_{g,1}^r \rtimes B(\Sigma_{g,1}^r)$.

{\em Magnus representations of ${\rm Aut}(\mathbb F_n)$}. 
In the case when $\pi=\mathbb F_n$ and $\rho:\pi\to F$ has a characteristic kernel $K$, Magnus constructed a 
crossed-homomorphism ${\rm Aut}(\mathbb F_n)\to GL(n, \Z[F])$ whose restriction 
\[ {\rm Aut}(\mathbb F_n, \rho) \to GL(n, \Z[F])\]
is the homomorphism described in section \ref{magnus} (see \cite{Sakasai}). Note that Magnus' homomorphism coincides with the morphism $\beta^+$ provided by 
the construction above to the data $(\rho, \beta)$, where $\beta$ is the left action of ${\rm Aut}(\mathbb F_n)$ on $V=\Z[F]$, after 
identifying $GL(n,\Z[F])$ with a subgroup of $GL(V^{\oplus n})$. 
According to (\cite{Sakasai}, Prop. 3.4)
\[ \ker \beta^+=\ker \left({\rm Aut}(\mathbb F_n)\to {\rm Aut}\left(\frac{\mathbb F_n}{[K,K]}\right)\right).\]

\vspace{0.3cm}
{\bf Acknowledgements}. I'm grateful to J. Aramayona, M. Boggi, P. Eyssidieux, J. March\'e, 
A. Papadopoulos and M. Vazirani for useful discussions.

{
\small      
      
\bibliographystyle{plain}

}

\end{document}